\def\titlerunning#1{\gdef\titrun{#1}}
\def\author#1{\gdef\autrun{\def\and{\unskip, }#1}\gdef\@author{#1}}
\def\address#1{{\def\and{\\\hspace*{18pt}}\renewcommand{\thefootnote}{}%
		\footnote {#1}}%
	\markboth{\autrun}{\titrun}}
\def\email#1{e-mail: #1}
\def\keywords#1{\par\medskip
	\noindent\textbf{Keywords.} #1}
\newtheorem{theorem}{Theorem}[section]
\newtheorem{corollary}[theorem]{Corollary}
\newtheorem{lemma}[theorem]{Lemma}
\newtheorem{proposition}[theorem]{Proposition}
\theoremstyle{definition}
\newtheorem{definition}[theorem]{Definition}
\newtheorem{remark}[theorem]{Remark}
\newtheorem{example}[theorem]{Example}
\numberwithin{equation}{section}
\newtheorem{question}{Question}
\def \C {\mathbb{C}}
\def \a {\alpha }
\def \b {\beta}
\def \de {\delta}
\def \De {\Delta}
\def \la {\lambda}
\def \La {\Lambda}
\def\w {\omega}
\def\Om{\Omega}
\def\pa{\partial}
\def\na {\nabla}
\def\Ga{\Gamma}
\begin{document}
	
\baselineskip=17pt

\titlerunning{Hodge decomposition and Hard Lefschetz Condition on almost K\"{a}hler manifolds}
\title{Hodge decomposition and Hard Lefschetz Condition on almost K\"{a}hler manifolds }

\author{Teng Huang and Weiwei Wang}

\date{}

\maketitle

\address{Teng Huang: School of Mathematical Sciences, CAS Key Laboratory of Wu Wen-Tsun Mathematics, University of Science and Technology of China, Hefei, Anhui, 230026, People’s Republic of China; \email{htmath@ustc.edu.cn;htustc@gmail.com}}
\address{Weiwei Wang: School of Mathematical Sciences, University of Electronic Science and Technology of China, Chengdu, Sichuan, 611731, People’s Republic of China; \email{wawnwg123@163.com}}
\begin{abstract}
In this article, we discuss the spaces of harmonic forms $\mathcal{H}^{\bullet}_{d}$ over a closed almost K\"{a}hler manifold $(X, J,\w)$.   We show that if the almost complex structure $J$ on the almost K\"{a}hler manifold $X$ is not too non-integrable in some sense, then the spaces $\mathcal{H}^{\bullet}_{d}$ have the Hodge decomposition  $\mathcal{H}^{k}_{d}=\oplus_{p+q=k}\mathcal{H}^{p,q}_{d}$. As a consequence, the not too non-integrable almost complex structure $J$ is complex $C^{\infty}$-pure-and-full, and the Hard Lefschetz Condition (HLC) on $\mathcal{H}^{\bullet}_{d}$ is satisfied. Moreover, we can prove a rigidity result for the closed $4$-dimensional almost K\"{a}hler manifold with $b^{+}_{2}(X)\geq2$.  
\end{abstract}
\keywords{Harmonic forms, Hodge theory, Hodge decomposition, Hard Lefschetz Condition, complex $C^{\infty}$-pure-and-full, almost K\"{a}hler manifold}
\section{Introduction}
Let $(X, J,\w)$ be a closed symplectic manifold of dimension $\dim X=2n$. There always exists an almost complex structure $J$ which is compatible with $\w$. This defines a metric $g$ by
$$g(\cdot,\cdot)=\w(\cdot,J\cdot)>0.$$
The triple $(X,J,\w)$ is called an almost K\"{a}hler manifold and $\w$ is called an almost K\"{a}hler form. If $J$ is integrable then it is K\"{a}hler.  

We will consider the   Hodge theory and the related spaces of harmonic forms on the almost K\"{a}hler manifold. The notion of symplectic Hodge theory was discussed in the late 1940s by Ehresmann and Libermann \cite{EL,Lib} and re-introduced by Brylinski \cite{Bry} about twenty years ago. Hodge theory plays an important role in Riemannian and complex geometry. But in symplectic geometry, its usefulness has been rather limited. 

On an almost K\"{a}hler manifold, the exterior differential of an almost complex manifold has four components, 
$$d=\pa+\mu+\bar{\pa}+\bar{\mu},$$ 
where $\mu$ and $\bar{\mu}$ arise from the Nijenhuis tensor and vanish if and only if the almost complex structure $J$ is integrable. Let $\de$ denote one of the components $\pa,\mu,\bar{\pa},\bar{\mu}$.  Recently, Cirici and Wilson \cite{CW1,CW2} defined the $\de$-Laplacian by letting
$$\De_{\de}:=\de\de^{\ast}+\de^{\ast}\de.$$
For all $p,q$, we will denote by
$$\mathcal{H}^{p,q}_{\de}:=\ker{\De_{\de}}\cap\Om^{p,q}=\ker{\de}\cap\ker{\de^{\ast}}\cap\Om^{p,q}$$
the space of $\de$-harmonic forms in bi-degree $(p,q)$. Here $\Om^{p,q}$ denotes the space of $(p,q)$-forms. By introducing a symplectic Hodge star operator $\ast_{s}$,  Brylinski \cite{Bry} proposed a Hodge theory of closed symplectic manifolds. The space of symplectic harmonic $k$-forms is $$\mathcal{H}^{k}_{sym}:=\ker{d}\cap\ker{d^{\La}}\cap\Om^{k},$$ 
where $d^{\La}:=[d,\La]=\ast_{s}d\ast_{s}$. He also showed that in almost K\"{a}hler manifold, a form of pure $(p,q)$ is in $\mathcal{H}^{p+q}_{sym}$ if only if it is in $\mathcal{H}^{p+q}_{d}$. This gives an inclusion $\bigoplus_{p+q=k}\mathcal{H}^{p,q}_{d}\hookrightarrow\mathcal{H}^{p+q}_{sym}$. In general, this is strict. Indeed, Yan \cite{Yan} showed that for $k=0,1,2$, every cohomology class has a symplectic harmonic representative.

In \cite{CW1}, Cirici and Wilson considered the spaces of $\bar{\pa}$-$\mu$-harmonic forms given by the intersections
$$\mathcal{H}^{p,q}_{\bar{\pa}}\cap\mathcal{H}^{p,q}_{\mu}.$$
These are identified with the kernel of the self-adjoint elliptic operator given by $\De_{\bar{\pa}}+\De_{\mu}$. Recall that $h^{p,q}=\dim (\mathcal{H}^{p,q}_{\bar{\pa}}\cap\mathcal{H}^{p,q}_{\mu})$ denotes the dimension of the space of $\bar{\pa}$-$\mu$-harmonic forms of type $(p,q)$. Let $b^{k}:=\dim H^{k}_{d}$ denote the Betti numbers of a closed manifold $X$. Theorem 4.1 in \cite{CW1} implies that, for a closed almost K\"{a}hler manifold, the numbers $h^{p,q}$ are bounded above by the Betti numbers and satisfy the Hodge diamond-type symmetries,
$$h^{p,q}=h^{q,p}=h^{n-p,n-q}=h^{n-q,n-p}.$$
There is an inclusion 
$$\bigoplus_{p+q=k}\mathcal{H}^{p,q}_{d}\subset\mathcal{H}^{k}_{d}$$
which gives topological bounds
$$\sum_{p+q=k}h^{p,q}\leq b^{k}, \forall\  k\geq0.$$
K\"{a}hler manifolds play a central role at the intersection of complex, symplectic and Riemannian geometry. It's well known that the spaces of harmonic forms of a closed K\"{a}hler manifold satisfy two crucial properties: Hodge decomposition and Hard Lefschetz Condition, which do not hold for a general closed symplectic manifold or complex manifold.
\begin{question}
It's natural to ask under what condition, $b^{k}=\sum_{p+q=k}h^{p,q}$ is established?
\end{question} 
For any $k\geq0$, there is a decomposition as follows (see Proposition \ref{P1}):
$$\Om^{k}=\ker(\De_{\bar{\pa}}+\De_{\mu})\cap\Om^{k}(X)\bigoplus \mathrm{Im}(\De_{\bar{\pa}}+\De_{\mu})\cap\Om^{k}(X).$$
One also can see (cf. Theorem \ref{T3}) $$\ker(\De_{\bar{\pa}}+\De_{\mu})\cap\Om^{k}(X)=\bigoplus_{p+q=k}\mathcal{H}_{d}^{p,q}.$$ 
Since $\De_{\bar{\pa}}+\De_{\mu}=\De_{\pa}+\De_{\bar{\mu}}$, for any $\a\in\Om^{\bullet}(X)$, we have
\begin{equation}\label{E1}
\begin{split}
\langle(\De_{\bar{\pa}}+\De_{\mu})\a,\a\rangle_{L^{2}}&=\frac{1}{2}\langle(\De_{\bar{\pa}}+\De_{\pa})\a,\a\rangle_{L^{2}}+\frac{1}{2}\langle(\De_{\mu}+\De_{\bar{\mu}})\a,\a\rangle_{L^{2}}\\
&\geq\frac{1}{2}\langle(\De_{\mu}+\De_{\bar{\mu}})\a,\a\rangle_{L^{2}}.\\
\end{split}
\end{equation}
We denote  $\mathcal{M}(k,c)$ the family of closed almost K\"{a}hler manifolds with 
$$\langle(\De_{\bar{\pa}}+\De_{\mu})\a,\a\rangle_{L^{2}}\geq c(\De_{\mu}+\De_{\bar{\mu}})\a,\a\rangle_{L^{2}}$$
for any $\a\in(\bigoplus_{p+q=k}\mathcal{H}_{d}^{p,q})^{\bot}$. According to Equation \ref{E1}, we may assume without loss of generality that $c\geq\frac{1}{2}$. Let's further suppose that $c$ is large enough, then for any closed almost K\"{a}hler manifolds in $\mathcal{M}(k,c)$, we will show  $b^{k}=\sum_{p+q=k}h^{p,q}$ holds.  In fact, by equations (\ref{E10}), one can see that $c>20$ is a sufficient condition in Theorem \ref{T1}.  
\begin{theorem}\label{T1}
Let $(X,J,\w)$ be a closed $2n$-dimensional almost K\"{a}hler manifold, let $0<k\leq n$. There exists an uniform positive constant $c>0$ such that if $X\in\mathcal{M}(k,c)$, then
$$\mathcal{H}^{k}_{d}(X)=\bigoplus_{p+q=k}\mathcal{H}^{p,q}_{d}(X).$$
\end{theorem}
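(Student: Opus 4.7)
The plan is to establish the reverse inclusion $\mathcal{H}^k_d(X)\subseteq\bigoplus_{p+q=k}\mathcal{H}^{p,q}_d(X)$. Given $\alpha\in\mathcal{H}^k_d$, I apply the Hodge decomposition of $\Omega^k$ associated to the self-adjoint elliptic operator $\Delta_{\bar{\pa}}+\Delta_{\mu}$ (Proposition \ref{P1}), together with the identification $\ker(\Delta_{\bar{\pa}}+\Delta_{\mu})\cap\Omega^k=\bigoplus_{p+q=k}\mathcal{H}^{p,q}_d$ (Theorem \ref{T3}), to decompose $\alpha=\alpha_1+\alpha_2$ with $\alpha_1\in\bigoplus_{p+q=k}\mathcal{H}^{p,q}_d$ and $\alpha_2$ in the orthogonal complement of that space. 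The stated inclusion $\bigoplus_{p+q=k}\mathcal{H}^{p,q}_d\subset\mathcal{H}^k_d$ puts $\alpha_1\in\mathcal{H}^k_d$, so $\alpha_2=\alpha-\alpha_1\in\mathcal{H}^k_d$; in particular $d\alpha_2=0$ and $d^*\alpha_2=0$. The theorem is thus reduced to proving $\alpha_2=0$.

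The core of the argument is an \emph{upper} bound of the form
\[
\langle(\Delta_{\bar{\pa}}+\Delta_{\mu})\alpha_2,\alpha_2\rangle \;\leq\; C\,\langle(\Delta_{\mu}+\Delta_{\bar{\mu}})\alpha_2,\alpha_2\rangle
\]
valid for every $d$-harmonic $\alpha_2$, with a universal constant $C$ (the estimate announced in \eqref{E10} will give $C<c$ once $c>20$). Expanding $d=\pa+\bar{\pa}+\mu+\bar{\mu}$ in $\Delta_d=dd^*+d^*d$ and using the identity $\Delta_{\bar{\pa}}+\Delta_{\mu}=\Delta_{\pa}+\Delta_{\bar{\mu}}$ stated in the excerpt gives
\[
\Delta_d \;=\; 2(\Delta_{\bar{\pa}}+\Delta_{\mu}) + R,\qquad R:=\sum_{\delta\neq\eta}\{\delta,\eta^*\},
\]
where $\delta,\eta$ range over $\{\pa,\bar{\pa},\mu,\bar{\mu}\}$. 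Pairing with $\alpha_2$ and using $\langle\Delta_d\alpha_2,\alpha_2\rangle=0$ yields $2\langle(\Delta_{\bar{\pa}}+\Delta_{\mu})\alpha_2,\alpha_2\rangle=-\langle R\alpha_2,\alpha_2\rangle$, so it suffices to prove $|\langle R\alpha_2,\alpha_2\rangle|\leq 2C\langle(\Delta_{\mu}+\Delta_{\bar{\mu}})\alpha_2,\alpha_2\rangle$. Each cross term expands as $\langle\{\delta,\eta^*\}\alpha_2,\alpha_2\rangle=\langle\delta\alpha_2,\eta\alpha_2\rangle+\langle\eta^*\alpha_2,\delta^*\alpha_2\rangle$; for the five unordered pairs containing at least one of $\mu,\bar{\mu}$, a weighted Cauchy--Schwarz absorbs them into $\|\mu\alpha_2\|^2+\|\mu^*\alpha_2\|^2+\|\bar{\mu}\alpha_2\|^2+\|\bar{\mu}^*\alpha_2\|^2$.

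Once the upper bound is in hand the theorem follows at once. The hypothesis $X\in\mathcal{M}(k,c)$ applied to $\alpha_2\in\bigl(\bigoplus_{p+q=k}\mathcal{H}^{p,q}_d\bigr)^{\perp}$ gives the reverse inequality $\langle(\Delta_{\bar{\pa}}+\Delta_{\mu})\alpha_2,\alpha_2\rangle\geq c\langle(\Delta_{\mu}+\Delta_{\bar{\mu}})\alpha_2,\alpha_2\rangle$; selecting $c>C$ forces both sides to vanish, so $\alpha_2\in\ker(\Delta_{\bar{\pa}}+\Delta_{\mu})\cap\ker(\Delta_{\bar{\pa}}+\Delta_{\mu})^{\perp}=\{0\}$.

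The main obstacle lies in the remaining cross pair $\{\pa,\bar{\pa}\}$ in $R$, whose contribution $2\mathrm{Re}(\langle\pa\alpha_2,\bar{\pa}\alpha_2\rangle+\langle\pa^*\alpha_2,\bar{\pa}^*\alpha_2\rangle)$ carries no $\mu$ or $\bar{\mu}$ factor. A direct Cauchy--Schwarz bounds it by $\|\pa\alpha_2\|^2+\|\bar{\pa}\alpha_2\|^2+\|\pa^*\alpha_2\|^2+\|\bar{\pa}^*\alpha_2\|^2$, which, after feeding back through the identity above, produces only the trivial inequality $0\leq 0$. To circumvent this one must use the $d^2=0$ identities $\pa^2+\{\bar{\pa},\mu\}=0$, $\bar{\pa}^2+\{\pa,\bar{\mu}\}=0$, $\{\pa,\bar{\pa}\}+\{\mu,\bar{\mu}\}=0$ (and their adjoints) together with the almost-K\"ahler commutator identities to re-express these $\pa,\bar{\pa}$ pairings in terms of $\mu,\bar{\mu}$-type operators before estimating; the careful bookkeeping of constants through this substitution is what produces the explicit threshold $c>20$ recorded in \eqref{E10}.
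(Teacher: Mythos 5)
Your proposal is correct and follows essentially the same route as the paper: the same decomposition of a $\Delta_d$-harmonic form via the elliptic operator $\Delta_{\bar{\pa}}+\Delta_{\mu}$ (Proposition \ref{P1} and Theorem \ref{T3}), the same expansion of $\Delta_d$ into $2(\Delta_{\bar{\pa}}+\Delta_{\mu})$ plus cross terms, and the same key observation that the $\pa$--$\bar{\pa}$ cross terms must be rewritten via the almost-K\"ahler identities $[\pa,\bar{\pa}^{\ast}]=[\bar{\mu}^{\ast},\bar{\pa}]+[\mu,\pa^{\ast}]$, $[\bar{\pa},\pa^{\ast}]=[\mu^{\ast},\pa]+[\bar{\mu},\bar{\pa}^{\ast}]$ (Proposition \ref{P3}) before the weighted Cauchy--Schwarz absorption, which is exactly the content of the paper's Proposition \ref{P9}. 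The only cosmetic difference is at the end, where you conclude $\alpha_2=0$ directly from $\langle(\Delta_{\bar{\pa}}+\Delta_{\mu})\alpha_2,\alpha_2\rangle=0$ and orthogonality, whereas the paper routes through Lemma \ref{L4}; both are valid.
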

\begin{remark}
Cirici and Wilson extended the Hodge and Serre dualities, hard Lefschetz duality, and Lefschetz decomposition to all spaces of $\De_{d}$-harmonic $(p,q)$-forms (see \cite{CW1}).  Under the setting of Theorem \ref{T1}, the space of $\De_{d}$-harmonic $k$-forms also has these properties.
\end{remark}
We say that $J$ is complex-$C^{\infty}$-pure-and-full at $k$-th stage if $J$ induces a decomposition of 
$$H^{k}_{dR}(X)=\bigoplus_{p+q=k}H^{p,q}(X).$$ 
For $k=2$, $J$ is called complex-$C^{\infty}$-pure-and-full (see \cite{AT,DLZ}). By Theorem \ref{T1} and Theorem \ref{T4}, we can prove that if the Nijenhuis tensor ensures that $X\in\mathcal{M}(k,c)$ for some constant $c$, then $J$ is complex-$C^{\infty}$-pure-and-full in $k$-th stage.
\begin{corollary}
	Let $(X,J,\w)$ be a closed $2n$-dimensional almost K\"{a}hler manifold, let $0<k\leq n$. There exists an uniform positive constant $c>0$ such that if $X\in\mathcal{M}(k,c)$, then
	$J$ is complex-$C^{\infty}$-pure-and-full in $k$-stage.
\end{corollary}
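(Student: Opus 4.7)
The plan is to combine Theorem \ref{T1} with the standard de Rham Hodge isomorphism and the natural map from pure-type harmonic forms to de Rham cohomology, with Theorem \ref{T4} used to identify the image in pure-type cohomology. Recall that by definition $J$ is complex-$C^{\infty}$-pure-and-full at the $k$-th stage if $H^{k}_{dR}(X)=\bigoplus_{p+q=k}H^{p,q}_{J}(X)$, where $H^{p,q}_{J}(X):=\{[\a]\in H^{k}_{dR}(X):\a\in\Om^{p,q}(X),\ d\a=0\}$. Thus the proof reduces to two statements: (a) every de Rham class is a sum of classes of pure type (fullness), and (b) the sum $\sum_{p+q=k}H^{p,q}_{J}(X)\subseteq H^{k}_{dR}(X)$ is direct (purity).

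For fullness, I would use that Hodge theory gives the canonical isomorphism $H^{k}_{dR}(X)\cong \mathcal{H}^{k}_{d}(X)$, so any $[\eta]\in H^{k}_{dR}(X)$ is represented by a $d$-harmonic form $h$. By Theorem \ref{T1}, under the hypothesis $X\in\mathcal{M}(k,c)$, we can write $h=\sum_{p+q=k}h^{p,q}$ with $h^{p,q}\in\mathcal{H}^{p,q}_{d}(X)$. Each $h^{p,q}$ is in particular a $d$-closed pure-type $(p,q)$-form, so $[h^{p,q}]\in H^{p,q}_{J}(X)$, and $[\eta]=\sum_{p+q=k}[h^{p,q}]$ lies in $\sum_{p+q=k}H^{p,q}_{J}(X)$.

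For purity, I would argue as follows. The assignment $h^{p,q}\mapsto [h^{p,q}]$ yields a well-defined linear map $\Phi:\bigoplus_{p+q=k}\mathcal{H}^{p,q}_{d}(X)\to H^{k}_{dR}(X)$ whose image lands in $\sum_{p+q=k}H^{p,q}_{J}(X)$. By Theorem \ref{T1} the domain is exactly $\mathcal{H}^{k}_{d}(X)$, and the Hodge theorem makes the resulting map $\mathcal{H}^{k}_{d}(X)\to H^{k}_{dR}(X)$ an isomorphism. Consequently the inclusion $\sum_{p+q=k}H^{p,q}_{J}(X)\subseteq H^{k}_{dR}(X)$ is an equality and, counting dimensions (or tracking the bi-degree of the harmonic representative via Theorem \ref{T4}, which identifies $\mathcal{H}^{p,q}_{d}(X)$ with $H^{p,q}_{J}(X)$), the sum must be direct: any relation $\sum_{p+q=k}[\a^{p,q}]=0$ with $\a^{p,q}\in\Om^{p,q}(X)$ closed pulls back, after passing to harmonic representatives, to a relation among elements of the direct sum $\bigoplus_{p+q=k}\mathcal{H}^{p,q}_{d}(X)$, forcing every $[\a^{p,q}]$ to vanish.

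The main obstacle is the last point: showing that a pure-type $d$-closed form whose de Rham class is null must already be null in $H^{p,q}_{J}(X)$. Equivalently, one must show that the harmonic projection of a pure-type closed form is again pure of the same bi-degree. This is precisely the content of Theorem \ref{T4} in the paper, and it is the step where the hypothesis $X\in\mathcal{M}(k,c)$ plays a role beyond Theorem \ref{T1}: it ensures the bi-degree decomposition of $\mathcal{H}^{k}_{d}(X)$ is compatible with the natural bi-grading on closed forms modulo exact forms, so the map $\Phi$ splits as a direct sum of isomorphisms $\mathcal{H}^{p,q}_{d}(X)\xrightarrow{\sim}H^{p,q}_{J}(X)$.
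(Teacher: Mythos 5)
Your proposal is correct and follows essentially the same route as the paper: the corollary is obtained by combining Theorem \ref{T1} (which gives $\mathcal{H}^{k}_{d}=\bigoplus_{p+q=k}\mathcal{H}^{p,q}_{d}$ under the hypothesis $X\in\mathcal{M}(k,c)$) with Theorem \ref{T4} (which converts that harmonic decomposition into the complex-$C^{\infty}$-pure-and-full property at the $k$-th stage). Your additional detail on fullness and on the directness of the sum is exactly the content of the paper's proof of Theorem \ref{T4}, and you correctly identify the one nontrivial step, namely that the harmonic projection of a $d$-closed pure $(p,q)$-form is again of pure bi-degree, as the point where Theorem \ref{T4} is needed.
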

A special class of symplectic manifolds is represented by those satisfying the Hard Lefschetz Condition (HLC), i.e., those closed $2n$-dimensional symplectic manifolds $(X,\w)$ for which the map
$$L^{n-k}:H^{k}_{dR}(X)\rightarrow H^{2n-k}_{dR}(X),\ \forall\ 0\leq k<n$$
are isomorphisms. In particular, the following facts are equivalent on a closed symplectic  symplectic  $(X,\w)$ (cf. \cite{Bry,Mat,Mer,TT,Yan}) 
\begin{itemize}
	\item the hard Lefschetz condition (HLC) holds;
	\item the Brylinski conjecture, i.e. the existence of a symplectic harmonic form in each de Rham cohomology class;
	\item the $dd^{\La}$-lemma, i.e. every $d^{\La}$-closed, $d$-exact form is also $dd^{\La}$-exact.
\end{itemize}
A classical result states if $(X,\w,J)$ is a closed K\"{a}hler manifold, then $(X,\w)$ satisfies the HLC \cite{Huy} and the de Rham complex $(d,\Om^{\ast}(X))$ is a formal DGA in the sense of Sullivan \cite{DGMS}. We call the Hard Lefschetz Condition holds on $(\mathcal{H}^{\bullet}_{d},\w)$ if 
$$L^{n-k}:\mathcal{H}^{k}_{d}(X)\rightarrow\mathcal{H}^{2n-k}_{d}(X),\ \forall\ 0\leq k<n$$
are isomorphisms. In \cite{TW}, the authors studied the Hard Lefschetz property of $\ker\De_{d}$ on almost K\"{a}hler manifold. It is easy to see that the HLC on  $(\mathcal{H}^{\bullet}_{d},\w)$ implies the HLC on $(H^{\bullet}_{dR},\w)$ (see \cite{TW}).
\begin{question}
Whether HLC on $(H^{\bullet}_{dR},\w)$ is equivalent to HLC on $(\mathcal{H}^{\bullet}_{d},\w)$?
\end{question}
By Proposition \ref{P8}, one can see that HLC on $(\mathcal{H}^{\bullet}_{d},\w)$ is equivalent to $\mathcal{H}^{k}_{d}=\mathcal{H}^{k}_{d^{\La}}$. It's easy to see that the space $\oplus_{p+q=k}\mathcal{H}^{p,q}_{d}\subset\mathcal{H}^{k}_{d}\cap\mathcal{H}^{k}_{d^{\La}}$. We will study another elliptic operator $\De_{\bar{\pa}+\mu}$ on a closed almost K\"{a}hler manifold. For any $k\geq0$, there is a decomposition as follows:
$$\Om^{k}=\ker(\De_{\bar{\pa}+\mu})\cap\Om^{k}(X)\bigoplus \mathrm{Im}(\De_{\bar{\pa}+\mu})\cap\Om^{k}(X).$$
One can also see
$$\ker(\De_{\bar{\pa}+\mu})\cap\Om^{k}(X)=\mathcal{H}^{k}_{d}(X)\cap\mathcal{H}^{k}_{d^{\La}}(X).$$ 
Denote by $\mathcal{H}^{k}_{\bar{\pa}+\mu}(X)=\ker(\De_{\bar{\pa}+\mu})$.  We denote  $\widetilde{\mathcal{M}}(k,\tilde{c})$ the family of closed almost K\"{a}hler manifolds with 
$$(\De_{\bar{\pa}+\mu}\a,\a)_{L^{2}(X)}\geq \tilde{c}\langle(\De_{\mu}+\De_{\bar{\mu}})\a,\a\rangle_{L^{2}}$$
for any $\a\in(\mathcal{H}^{k}_{\bar{\pa}+\mu}(X))^{\bot}$. In our article, we will show that the spaces of harmonic forms $(\mathcal{H}_{d}^{k}(X),\w)$, $0<k<n$, satisfy HLC if the Nijenhuis tensor ensures that $X\in\widetilde{\mathcal{M}}(k,\tilde{c})$ for some positive constant $c$. 
\begin{theorem}\label{T8}
	Let $(X,J,\w)$ be a closed $2n$-dimensional almost K\"{a}hler manifold, let $0<k\leq n$. There exists an uniform positive constant $\tilde{c}>0$ such that if $X\in\widetilde{\mathcal{M}}(k,\tilde{c})$, then
	$$\mathcal{H}^{k}_{d}(X)=\mathcal{H}^{k}_{d^{\La}}(X).$$
\end{theorem}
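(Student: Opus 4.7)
The plan is to apply Proposition \ref{P8}'s reformulation indirectly: rather than verifying HLC on harmonic forms by hand, I will prove the equivalent statement $\mathcal{H}^{k}_{d}(X)=\mathcal{H}^{k}_{d^{\La}}(X)$ by decomposing an arbitrary $d$-harmonic form along the kernel and image of $\De_{\bar{\pa}+\mu}$ and using the spectral gap hypothesis to kill the non-$\mathcal{H}^{k}_{\bar{\pa}+\mu}$ part. Take $\a\in\mathcal{H}^{k}_{d}$ and split
$$\a=\a_{0}+\a_{1},\qquad \a_{0}\in\mathcal{H}^{k}_{\bar{\pa}+\mu}(X),\quad \a_{1}\in(\mathcal{H}^{k}_{\bar{\pa}+\mu}(X))^{\bot}$$
via the orthogonal decomposition recalled just before the statement. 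Because $\mathcal{H}^{k}_{\bar{\pa}+\mu}=\mathcal{H}^{k}_{d}\cap\mathcal{H}^{k}_{d^{\La}}$, we already have $\a_{0}\in\mathcal{H}^{k}_{d^{\La}}$ and $\a_{1}=\a-\a_{0}\in\mathcal{H}^{k}_{d}$, so the whole task reduces to showing $\a_{1}=0$.

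Next, I would derive a pointwise identity of the form
$$\De_{d}=2\De_{\bar{\pa}+\mu}+E,$$
where $E$ is a bounded zeroth-order-in-$d$ combination of the four operators $\mu,\bar{\mu},\mu^{\ast},\bar{\mu}^{\ast}$ (and their commutators with $\pa,\bar{\pa}$) coming from expanding $d=(\bar{\pa}+\mu)+(\pa+\bar{\mu})$, squaring, and applying the almost K\"{a}hler identities from Cirici--Wilson along with $d^{2}=0$. The crucial analytic input is that the corresponding quadratic form can be controlled
$$|\langle E\a_{1},\a_{1}\rangle_{L^{2}}|\leq C\langle(\De_{\mu}+\De_{\bar{\mu}})\a_{1},\a_{1}\rangle_{L^{2}}$$
for a universal constant $C$ depending only on $n$. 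Pairing $\De_{d}\a_{1}=0$ with $\a_{1}$ then gives
$$2\langle\De_{\bar{\pa}+\mu}\a_{1},\a_{1}\rangle_{L^{2}}=-\langle E\a_{1},\a_{1}\rangle_{L^{2}}\leq C\langle(\De_{\mu}+\De_{\bar{\mu}})\a_{1},\a_{1}\rangle_{L^{2}}.$$

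Combining this with the defining inequality of $\widetilde{\mathcal{M}}(k,\tilde{c})$ applied to $\a_{1}\in(\mathcal{H}^{k}_{\bar{\pa}+\mu})^{\bot}$ yields
$$2\tilde{c}\langle(\De_{\mu}+\De_{\bar{\mu}})\a_{1},\a_{1}\rangle_{L^{2}}\leq C\langle(\De_{\mu}+\De_{\bar{\mu}})\a_{1},\a_{1}\rangle_{L^{2}},$$
so any choice $\tilde{c}>C/2$ forces $\langle(\De_{\mu}+\De_{\bar{\mu}})\a_{1},\a_{1}\rangle=0$, i.e.\ $\mu\a_{1}=\bar{\mu}\a_{1}=\mu^{\ast}\a_{1}=\bar{\mu}^{\ast}\a_{1}=0$. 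Plugging these vanishings back into the identity gives $\De_{\bar{\pa}+\mu}\a_{1}=0$, hence $\a_{1}\in\mathcal{H}^{k}_{\bar{\pa}+\mu}\cap(\mathcal{H}^{k}_{\bar{\pa}+\mu})^{\bot}=\{0\}$, so $\a=\a_{0}\in\mathcal{H}^{k}_{d^{\La}}$. The reverse inclusion $\mathcal{H}^{k}_{d^{\La}}\subset\mathcal{H}^{k}_{d}$ follows by the same argument with $d$ and $d^{\La}$ interchanged, since both the hypothesis and the intersection characterization of $\mathcal{H}^{k}_{\bar{\pa}+\mu}$ are symmetric in the two differentials.

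The main obstacle is the second step: producing the identity $\De_{d}=2\De_{\bar{\pa}+\mu}+E$ with a genuinely usable error term. Expanding $\De_{d}=(d+d^{\ast})^{2}$ via $d=(\bar{\pa}+\mu)+(\pa+\bar{\mu})$ produces four Laplacian-type blocks plus many cross-commutators; showing that after the Cirici--Wilson K\"{a}hler identities are applied, each residual term factors through at least one of $\mu,\bar{\mu},\mu^{\ast},\bar{\mu}^{\ast}$ (and so is $L^{2}$-dominated by $\De_{\mu}+\De_{\bar{\mu}}$) is the real bookkeeping work and the source of the explicit constant $\tilde{c}$.
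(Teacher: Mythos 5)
Your proposal follows essentially the same route as the paper's: the same orthogonal decomposition along $\ker\De_{\bar{\pa}+\mu}=\mathcal{H}^{k}_{d}\cap\mathcal{H}^{k}_{d^{\La}}$ (Proposition \ref{P10}), the same identity $\De_{d}=2\De_{\bar{\pa}+\mu}+E$ with $E=2([\bar{\mu}^{\ast},\bar{\pa}]+[\mu,\pa^{\ast}]+[\mu^{\ast},\pa]+[\bar{\mu},\bar{\pa}^{\ast}])$, and the same use of the $\widetilde{\mathcal{M}}(k,\tilde{c})$ hypothesis (this is exactly the paper's Proposition \ref{P11}); the only cosmetic differences are that you conclude $\a_{1}\in\mathcal{H}^{k}_{\bar{\pa}+\mu}\cap(\mathcal{H}^{k}_{\bar{\pa}+\mu})^{\bot}=\{0\}$ directly rather than passing through Lemma \ref{L4}, and you handle the reverse inclusion by $d\leftrightarrow d^{\La}$ symmetry rather than by $\dim\mathcal{H}^{k}_{d}=\dim\mathcal{H}^{k}_{d^{\La}}$. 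One caveat: $|\langle E\a_{1},\a_{1}\rangle_{L^{2}}|$ cannot be dominated by $\langle(\De_{\mu}+\De_{\bar{\mu}})\a_{1},\a_{1}\rangle_{L^{2}}$ alone, since the cross terms also carry $\pa,\bar{\pa}$ derivatives; one must, as the paper does, split via AM--GM into $\varepsilon\langle\De_{\bar{\pa}+\mu}\a_{1},\a_{1}\rangle_{L^{2}}+\varepsilon^{-1}\langle(\De_{\mu}+\De_{\bar{\mu}})\a_{1},\a_{1}\rangle_{L^{2}}$ and absorb the first piece into the left-hand side, after which your displayed inequality and the rest of your argument go through unchanged.
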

Following Theorem \ref{T8} and Proposition \ref{P8}, we get
\begin{corollary}\label{C5}
Let $(X,J,\w)$ be a closed $2n$-dimensional almost K\"{a}hler manifold.
There exists an uniform positive constant $c>0$ such that if $X\in\cap_{k=1}^{n-1}\widetilde{\mathcal{M}}(k,\tilde{c})$, then $(\mathcal{H}_{d}^{\bullet}(X),\w)$ satisfies the hard Lefschetz condition.
\end{corollary}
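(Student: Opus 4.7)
The plan is to chain Theorem \ref{T8} with Proposition \ref{P8} across all intermediate degrees. Proposition \ref{P8} already converts the HLC on $(\mathcal{H}^{\bullet}_{d}(X),\w)$ into the family of equalities $\mathcal{H}^{k}_{d}(X) = \mathcal{H}^{k}_{d^{\Lambda}}(X)$ indexed by the relevant $k$, while Theorem \ref{T8} produces exactly one such equality at a prescribed degree $k$ under the spectral-gap hypothesis $X \in \widetilde{\mathcal{M}}(k,\tilde c)$. The whole argument is therefore just a matter of running Theorem \ref{T8} over every $k$ in the HLC range and then invoking Proposition \ref{P8} once at the end.

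First I would produce the uniform constant. For each $k \in \{1,\ldots,n-1\}$, Theorem \ref{T8} supplies a threshold $\tilde c_{k} > 0$ such that $X \in \widetilde{\mathcal{M}}(k,\tilde c_{k})$ already forces $\mathcal{H}^{k}_{d}(X) = \mathcal{H}^{k}_{d^{\Lambda}}(X)$. Setting $\tilde c := \max_{1 \le k \le n-1} \tilde c_{k}$ is unambiguous because the index set is finite. The families $\widetilde{\mathcal{M}}(k,\cdot)$ are monotonically nonincreasing in their second argument: a larger constant encodes a stronger quadratic-form inequality between $\De_{\bar{\pa}+\mu}$ and $\De_{\mu}+\De_{\bar{\mu}}$ on the orthogonal complement of $\mathcal{H}^{k}_{\bar{\pa}+\mu}(X)$. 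Consequently, the hypothesis $X \in \bigcap_{k=1}^{n-1} \widetilde{\mathcal{M}}(k,\tilde c)$ implies $X \in \widetilde{\mathcal{M}}(k,\tilde c_{k})$ for every $k$ in the range.

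Next I would apply Theorem \ref{T8} at each $k = 1,\ldots,n-1$ to conclude $\mathcal{H}^{k}_{d}(X) = \mathcal{H}^{k}_{d^{\Lambda}}(X)$ for every such $k$. The endpoint case $k=0$ is handled separately and trivially on a closed connected manifold: $\mathcal{H}^{0}_{d}(X) = \R$ consists of constants, and $L^{n}(1)$ is a nonzero multiple of the volume form $\w^{n}$, so $L^{n}:\mathcal{H}^{0}_{d}(X) \to \mathcal{H}^{2n}_{d}(X)$ is an isomorphism independently of the Nijenhuis tensor. A single application of Proposition \ref{P8} then packages all these degree-by-degree equalities into the HLC statement on $(\mathcal{H}^{\bullet}_{d}(X),\w)$.

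I do not expect any substantive obstacle, since all of the hard analytic work has been carried out already in Theorem \ref{T8} and Proposition \ref{P8}. The only mildly delicate point is confirming that the constant $\tilde c$ can be chosen uniformly in $k$, but this is immediate from the finiteness of $\{1,\ldots,n-1\}$ together with the monotonicity of $\widetilde{\mathcal{M}}(k,\cdot)$.
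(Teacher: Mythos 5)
Your proposal is correct and follows essentially the same route as the paper, which derives Corollary \ref{C5} precisely by applying Theorem \ref{T8} at each degree $k=1,\dots,n-1$ to get $\mathcal{H}^{k}_{d}(X)=\mathcal{H}^{k}_{d^{\La}}(X)$ and then invoking the equivalence in Proposition \ref{P8}. Your additional bookkeeping (taking the maximum of the finitely many thresholds, using the monotonicity of $\widetilde{\mathcal{M}}(k,\cdot)$ in the constant, and disposing of the trivial $k=0$ endpoint) only makes explicit what the paper leaves implicit.
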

\begin{remark}
We denote  $\overline{\mathcal{M}}(k,\bar{c})$ the family of closed almost K\"{a}hler manifolds with $$\langle\De_{d}\a,\a\rangle_{L^{2}}\geq \bar{c}\langle(\De_{\mu}+\De_{\bar{\mu}})\a,\a\rangle_{L^{2}}$$
for any $\a\in(\mathcal{H}^{k}_{d}(X))^{\bot}$. For large enough $c$, we have
$$\mathcal{M}(k,c)\subset\widetilde{\mathcal{M}}(k,\tilde{c})\subset\overline{\mathcal{M}}(k,\bar{c}),$$
where $\tilde{c}$ and $\bar{c}$ are two positive constants (see Proposition \ref{P12} and Proposition \ref{P11}). In \cite{dT}, the authors proved that if the Nijenhuis tensor ensures that $X\in\overline{\mathcal{M}}(k+1,\bar{c})$, then the map
$L_{\w}^{n-k}:H^{k}_{dR}(X)\rightarrow H^{2n-k}_{dR}(X)$ induces an isomorphic map in cohomology for some positive constant $\bar{c}$. 
\end{remark}
The last purpose of this article is that
\begin{question}
	whether a closed almost K\"{a}hler manifold $X$ is K\"{a}hlerian when the Nijenhuis tensor $N$ of $X$ is small enough in some sense?	
\end{question} 
For any closed $4$-dimensional almost K\"{a}hler manifold $X$, it's easy to see $b^{+}_{2}(X)\geq1$ (since $d\w=0$ and $d^{\ast}\w=0$). It is well known that there exist closed symplectic manifolds satisfying the HLC but not admitting any K\"{a}hlerian structure. For example, any non-K\"{a}hlerian simply connected symplectic $4$-manifold (or see \cite{GM}). However, under the assumption that the Nijenhuis tensor is small enough, we can prove the following rigidity result. We also refer to \cite{Wei} for a detailed study on the Calabi-Yau equation on $4$-dimensional almost K\"{a}hler manifolds under the condition that  Nijenhuis tensor $N$ is small enough in $L^{1}$-norm. 
\begin{theorem}\label{T2}
	Let $(X,J,\w)$ be a $4$-dimensional closed almost K\"{a}hler manifold with $b^{+}_{2}(X)\geq2$. There exists a uniform positive constant $c>0$ such that if $X\in\mathcal{M}(2,c)$, then the almost complex structure $J$ is integrable.	
\end{theorem}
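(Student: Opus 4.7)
The plan is to combine the Hodge decomposition for $\H^{2}_{d}$ provided by Theorem~\ref{T1} with the fact that $b^{+}_{2}(X)\geq 2$ forces the existence of a nontrivial harmonic $(2,0)$-form $\a$, and then to extract the vanishing of the Nijenhuis tensor from a pointwise analysis of the identity $\bar{\mu}\a = 0$ implied by $d\a=0$.

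First, I take $c$ large enough that Theorem~\ref{T1} applies at $k=2$, yielding $\H^{2}_{d}(X) = \H^{2,0}_{d}\oplus\H^{1,1}_{d}\oplus\H^{0,2}_{d}$. In real dimension four with an almost K\"{a}hler metric, forms of type $(2,0)$ or $(0,2)$ are self-dual, primitive real $(1,1)$-forms are anti-self-dual, and $\w$ spans the self-dual part of $\La^{1,1}_{\R}$. Combined with the Lefschetz-type decomposition on $\H^{1,1}_{d}$ recalled in the remark after Theorem~\ref{T1}, taking real parts in the Hodge decomposition gives
$$\H^{+}_{d}(X) = \R\w \oplus \bigl(\H^{2,0}_{d}\oplus\H^{0,2}_{d}\bigr)_{\R}, \qquad b^{+}_{2}(X) = 1 + 2\dim_{\C}\H^{2,0}_{d}.$$
Hence $b^{+}_{2}(X)\geq 2$ produces a nonzero $\a\in\H^{2,0}_{d}(X)$.

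Next, I decompose $d\a=0$ along bidegrees. In complex dimension two, $\Om^{3,0}=0$ and $\Om^{4,-1}=0$, so $\pa\a=0$ and $\mu\a=0$ automatically, while $\bar{\pa}\a\in\Om^{2,1}$ and $\bar{\mu}\a\in\Om^{1,2}$ must vanish separately. The tensorial content is then extracted pointwise: at $p\in X$ with $\a(p)\neq 0$, I write $\a=f\,\theta^{1}\wedge\theta^{2}$ in a local $(1,0)$-coframe $\{\theta^{1},\theta^{2}\}$ and set $\bar{\mu}\theta^{j}=a_{j}\,\bar{\theta}^{1}\wedge\bar{\theta}^{2}$ for $j=1,2$. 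Since $\bar{\mu}$ is a $C^{\infty}$-linear derivation and $\bar{\mu}|_{\La^{0,1}}=0$, direct expansion gives
$$\bar{\mu}\a(p) = f(p)\bigl(a_{1}\,\theta^{2}\wedge\bar{\theta}^{1}\wedge\bar{\theta}^{2} - a_{2}\,\theta^{1}\wedge\bar{\theta}^{1}\wedge\bar{\theta}^{2}\bigr),$$
and linear independence of the two $(1,2)$-forms on the right forces $a_{1}=a_{2}=0$. Thus $\bar{\mu}|_{\La^{1,0}_{p}}=0$, which in view of $\bar{\mu}|_{\La^{0,1}}=0$ and the derivation property is equivalent to the Nijenhuis tensor $N$ vanishing at $p$.

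Finally, the nonzero form $\a$ solves the elliptic equation $\De_{d}\a=0$, so Aronszajn's unique continuation theorem ensures that $\{\a\neq 0\}$ is dense in $X$. Combining the pointwise vanishing of $N$ on this set with the smoothness of $N$, we conclude $N\equiv 0$ on $X$, and hence $J$ is integrable. The most delicate step I anticipate is the identity $\H^{+}_{d}=\R\w\oplus(\H^{2,0}_{d}\oplus\H^{0,2}_{d})_{\R}$, which requires the Lefschetz decomposition on $\H^{1,1}_{d}$ to isolate $\R\w$ as the self-dual part inside $\H^{1,1}_{d,\R}$; the pointwise computation and the unique continuation input are both standard once that is in place.
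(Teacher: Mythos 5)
Your proposal is correct and follows essentially the same route as the paper: both use Theorem \ref{T1} at $k=2$ together with the Lefschetz decomposition to identify $b^{+}_{2}=1+2h^{2,0}$, so that $b^{+}_{2}\geq 2$ forces a nonzero class in $\mathcal{H}^{2,0}_{d}$, which in turn forces integrability. The only difference is that the paper simply cites Lemma \ref{L2} (Cirici--Wilson) for the implication ``$\mathcal{H}^{2,0}_{d}\neq\{0\}$ in dimension $4$ $\Rightarrow$ $N\equiv 0$,'' whereas you reprove it from scratch via the bidegree splitting of $d\a=0$, the pointwise computation of $\bar{\mu}\a$, and Aronszajn's unique continuation --- all of which is sound.
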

\section{Preliminaries}
\subsection{Almost K\"{a}hler manifold }
We recall some definitions and results on the differential forms for almost complex manifolds. Let $(X,J)$ be a closed $2n$-dimensional almost Hermitian manifold and $J$ be a smooth almost complex structure on $X$. There is a natural action of $J$ on the space $\Om^{k}(X,\C):=\Om^{k}(X)\otimes\C$, which induces a topological type decomposition
$$\Om^{k}(X,\C)=\bigoplus_{p+q=k}\Om^{p,q}(X,\C),$$
where  $\Om^{p,q}(X,\C)$ denotes the space of complex forms of type $(p,q)$ with respect to $J$. As a matter of notation, bigraduation without further specification refers to complex forms. We have that
$$d:\Om^{p,q}\rightarrow\Om^{p+2,q-1}\oplus\Om^{p+1,q}\oplus\Om^{p,q+1}\oplus\Om^{p-1,q+2}$$
and so $d$ splits according as
$$d=\mu+\pa+\bar{\pa}+\bar{\mu},$$
where all the pieces are graded algebra derivations, $\mu$, $\bar{\mu}$ are  $0$-order differential operators. Note that each component of $d$ is a derivation, with bi-degrees given by 
$$|\mu|=(2,-1),\ |\pa|=(1,0),\ |\bar{\pa}|=(0,1),\ |\bar{\mu}|=(-1,2).$$
The integrability theorem of Newlander and Nirenberg states that the almost complex structure $J$ is integrable if and only if $N=0$, where
$$N:TX\otimes TX\rightarrow TX,$$
denotes the Nigenhuis tensor
$$N(X,Y):=[X,Y]+J[X,JY]+J[JX,Y]-[JX,JY].$$ 
In next lemma shows that $J$ is integrable if only if $N=0$, i.e, $\mu=0$ 
\begin{lemma}(\cite[Lemma 2.1]{CW2})\label{L7}
	With the above notation, we have:
	$$\mu+\bar{\mu}=-\frac{1}{4}(N\otimes Id_{\C})^{\ast},$$
	where the right hand side has been extended over all forms as a derivation.	
\end{lemma}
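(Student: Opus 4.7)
The plan is to reduce this global identity to a local computation on $1$-forms by exploiting that both sides are degree-$1$ derivations of $\Om^{\bullet}(X,\C)$. First I would observe that $d$ is a derivation and the bigraded decomposition $\Om^{p,q}\wedge\Om^{p',q'}\subset\Om^{p+p',q+q'}$ forces each bi-degree component of $d$ — in particular $\mu$ (bi-degree $(2,-1)$) and $\bar{\mu}$ (bi-degree $(-1,2)$) — to itself be a derivation; hence so is their sum $\mu+\bar{\mu}$, which has total degree $+1$. The right-hand side is by construction the degree-$1$ derivation associated to the vector-valued $2$-form $N$, specified on $\a\in\Om^{1}$ by $(N^{\ast}\a)(X,Y)=\a(N(X,Y))$ and on $f\in\Om^{0}$ by $N^{\ast}f=0$, and then extended to $\Om^{\bullet}(X,\C)$ by the graded Leibniz rule.

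Since $\Om^{\bullet}(X,\C)$ is locally generated as an algebra by $\Om^{0}$ and $\Om^{1}\otimes\C$, it then suffices to check the identity on these. On functions both sides vanish trivially: the target bi-degrees $(2,-1)$ and $(-1,2)$ of $\mu f$ and $\bar{\mu} f$ are empty, and $N^{\ast}$ kills $\Om^{0}$ by definition. On $1$-forms, split $\a=\a^{1,0}+\a^{0,1}$; since $\mu\a^{1,0}$ and $\bar{\mu}\a^{0,1}$ land in the empty bi-degrees $(3,-1)$ and $(-1,3)$, only $\bar{\mu}\a^{1,0}\in\Om^{0,2}$ and $\mu\a^{0,1}\in\Om^{2,0}$ can survive. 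Choosing a local frame $\{Z_{i},\bar{Z}_{j}\}$ of $TX\otimes\C$ with $JZ_{i}=iZ_{i}$ and $J\bar{Z}_{j}=-i\bar{Z}_{j}$, the standard formula $d\a(X,Y)=X\a(Y)-Y\a(X)-\a([X,Y])$ together with the fact that $\a\in\Om^{1,0}$ annihilates $T^{0,1}$ gives $(\bar{\mu}\a)(\bar{Z}_{i},\bar{Z}_{j})=-\a(\pi^{1,0}[\bar{Z}_{i},\bar{Z}_{j}])$, and symmetrically for $\mu$ on $\Om^{0,1}$.

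The key algebraic step is to identify $\pi^{1,0}[\bar{Z}_{i},\bar{Z}_{j}]$ with a multiple of $N(\bar{Z}_{i},\bar{Z}_{j})$. Extending $N$ $\C$-bilinearly and substituting $J\bar{Z}=-i\bar{Z}$, a direct expansion of the defining formula for the Nijenhuis tensor collapses the four bracket terms to $N(\bar{Z}_{i},\bar{Z}_{j})=2[\bar{Z}_{i},\bar{Z}_{j}]-2iJ[\bar{Z}_{i},\bar{Z}_{j}]$. Applying the projector $\pi^{1,0}=\tfrac{1}{2}(\mathrm{id}-iJ)$ on complex vectors then yields $\pi^{1,0}[\bar{Z}_{i},\bar{Z}_{j}]=\tfrac{1}{4}N(\bar{Z}_{i},\bar{Z}_{j})$, and hence $\bar{\mu}\a=-\tfrac{1}{4}(N\otimes Id_{\C})^{\ast}\a$ on $\Om^{1,0}$. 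The mirror computation with $Z$-vectors (using $JZ=iZ$) gives the analogous formula $\mu\a=-\tfrac{1}{4}(N\otimes Id_{\C})^{\ast}\a$ on $\Om^{0,1}$. Summing proves the identity on $\Om^{1}\otimes\C$, and the shared derivation property propagates it to all of $\Om^{\bullet}(X,\C)$.

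The main obstacle is really only bookkeeping: tracking signs and factors of $2$ while expanding $N(\bar{Z}_{i},\bar{Z}_{j})$ and re-decomposing by the $\pm i$-eigenspaces of $J$ is what produces the coefficient $\tfrac{1}{4}$, and a miscount at that stage would change the constant in the final identity. Everything else — the derivation property of the bi-degree components of $d$, the vanishing on $\Om^{0}$, and the extension from generators — is a routine derivation-extension argument.
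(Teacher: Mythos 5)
Your proposal is correct. Note that the paper does not prove this lemma at all --- it is quoted verbatim from \cite[Lemma 2.1]{CW2} --- so there is no in-paper argument to compare against; your reconstruction is essentially the standard proof (and the one in that reference): both sides are degree-one derivations vanishing on $\Om^{0}$, so it suffices to check $1$-forms, where the computation $(\bar{\mu}\a)(\bar{Z}_{i},\bar{Z}_{j})=-\a(\pi^{1,0}[\bar{Z}_{i},\bar{Z}_{j}])$ combined with $N(\bar{Z}_{i},\bar{Z}_{j})=4\pi^{1,0}[\bar{Z}_{i},\bar{Z}_{j}]$ (and its mirror) gives the factor $-\tfrac{1}{4}$. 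Your constants check out under the convention $d\a(X,Y)=X\a(Y)-Y\a(X)-\a([X,Y])$, which is the one consistent with the stated coefficient.
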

Expanding the equation $d^{2}=0$ we obtain the following set of equations:
\begin{equation*}
\begin{split}
&\mu^{2}=0,\\
&\mu\pa+\pa\mu=0,\\
&\pa^{2}+\mu\bar{\pa}+\bar{\pa}\mu=0,\\
&\pa\bar{\pa}+\bar{\pa}\pa+\mu\bar{\mu}+\bar{\mu}\mu=0,\\
&\bar{\pa}^{2}+\bar{\mu}\pa+\pa\bar{\mu}=0,\\
&\bar{\mu}\bar{\pa}+\bar{\pa}\bar{\mu}=0.\\
&\bar{\mu}^{2}=0.\\
\end{split}
\end{equation*} 
\begin{definition}
An almost K\"{a}hler structure on $2n$-dimensional manifold $X$ is a pair
$(\w,J)$, where $\w$ is a symplectic form,  and $J$ is an almost complex structure calibrated by $\w$.
\end{definition}
For any almost K\"{a}hler manifold $(X,J,\w)$ there is an associated Hodge-star operator \cite{Huy}
$\ast:\Om^{p,q}\rightarrow\Om^{n-q,n-p}$ defined by 
$$\a\wedge\ast\bar{\b}=\langle\a,\b\rangle\frac{\w^{n}}{n!}.$$
The Lefschetz operator  $L:\Om^{p,q}\rightarrow\Om^{p+1,q+1}$ defined by
$$L(\a)=\w\wedge\a.$$
It has adjoint $\La=\ast^{-1}L\ast$. There is a Lefschetz decomposition on complex $k$-forms
$$\Om^{k}(X,\C)=\bigoplus_{r\geq0}L^{r}P^{k-2r},$$
where $P^{\bullet}=\ker{\La}\cap\Om^{\bullet}(X,\mathbb{C})$. In local coordinates, the Nijenhuis tensor can be written as 
$$N_{jk}^{i}=J_{k}^{l}\pa_{l}J^{i}_{j}+J^{i}_{l}\pa_{j}J^{l}_{k}-J^{l}_{j}\pa_{l}J^{i}_{k}-J^{i}_{l}\pa_{k}J^{l}_{j}.$$ 
On an almost K\"{a}hler manifold, the Nigenhuis tensor can be written in the simpler form
$$N^{i}_{jk}=2(\na^{i}J^{l}_{j})J_{kl}.$$
\begin{lemma}(cf. \cite{Wei} and \cite{OS98,OS01})\label{L5}
For a $2n$-dimensional closed almost K\"{a}hler  manifold $(X,J,\w)$, we have
	$$|\na J|^{2}=\frac{1}{4}|N|^{2}=2(\tau^{\ast}-\tau),$$
	where $\tau^{\ast}$ (resp. $\tau$) is the $\ast$-scalar curvature (resp. scalar curvature). Furthermore,
	$$|\na J|^{2}\leq C(n)\|Rm(g)\|_{C^{0}},$$
	where $C(n)$ is a positive constant depending only on dimension, and $Rm(g)$ denote the Riemannian curvature tensor of an almost K\"{a}hler metric $g$.
\end{lemma}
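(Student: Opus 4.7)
The plan is to prove the three assertions in sequence. First, for the identity $|\na J|^{2}=\tfrac{1}{4}|N|^{2}$, I would start from the local-coordinate expression $N^{i}_{jk}=2(\na^{i}J^{l}_{j})J_{kl}$ recorded just above the lemma. Contracting this with itself and using the $J$-invariance of $g$ (equivalently $J^{a}{}_{l}J^{b}{}_{k}g_{ab}=g_{lk}$) together with the elementary identity $(\na J)J+J(\na J)=0$, which follows from $J^{2}=-\mathrm{Id}$ by differentiation, reduces the contraction $|N|^{2}=N^{i}{}_{jk}N_{i}{}^{jk}$ to $4|\na J|^{2}$. This step is a purely algebraic index calculation, which I would simply carry out directly.

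Second, for the Gray-type identity $|\na J|^{2}=2(\tau^{\ast}-\tau)$, the approach is to invoke the pointwise curvature identities of Gray for almost Hermitian manifolds. Recall that the $\ast$-Ricci tensor is defined by $\mathrm{Ric}^{\ast}(X,Y)=\mathrm{tr}(Z\mapsto R(X,JZ)JY)$ and the $\ast$-scalar curvature is $\tau^{\ast}=\mathrm{tr}_{g}\mathrm{Ric}^{\ast}$. On any almost Hermitian manifold the difference $\mathrm{Ric}^{\ast}-\mathrm{Ric}$ can be expressed as an explicit quadratic combination in $\na J$ plus terms involving $d\w$. The almost K\"ahler hypothesis $d\w=0$ forces the extra symmetries on $\na J$ that cause the $d\w$-terms to drop out. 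Taking the metric trace then produces the desired formula $\tau^{\ast}-\tau=\tfrac{1}{2}|\na J|^{2}$ pointwise.

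Third, for the bound $|\na J|^{2}\leq C(n)\|Rm(g)\|_{C^{0}}$, both $\tau$ and $\tau^{\ast}$ are pointwise algebraic traces of the Riemann curvature tensor (differing only by the insertion of $J$'s before tracing), so $|\tau^{\ast}-\tau|\leq C(n)\|Rm(g)\|_{C^{0}}$ with $C(n)$ depending only on the dimension; combining this with the Gray identity yields the claim. The main obstacle is the second step: deriving the pointwise identity $\tau^{\ast}-\tau=\tfrac{1}{2}|\na J|^{2}$ requires careful bookkeeping using the $U(n)$-decomposition of the Riemann tensor on an almost Hermitian manifold and the special symmetry of $\na J$ enforced by the almost K\"ahler hypothesis, which is the essential content of Gray's framework and precisely why the authors cite \cite{OS98,OS01}.
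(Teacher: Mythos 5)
The paper does not actually prove Lemma \ref{L5}; it is imported as a known result from \cite{Wei} and \cite{OS98,OS01}, so there is no internal proof to compare against, and your outline should be judged as a reconstruction of the cited arguments. As such it is correct in structure. The first and third steps are fine as written: contracting $N^{i}_{jk}=2(\na^{i}J^{l}_{j})J_{kl}$ with itself and using only the compatibility $g(J\cdot,J\cdot)=g(\cdot,\cdot)$ (so that $J_{kl}J^{k}{}_{m}=g_{lm}$) gives $|N|^{2}=4|\na J|^{2}$ directly --- you do not even need $(\na J)J+J(\na J)=0$ for that contraction, though you do need the almost K\"ahler hypothesis to justify the simplified coordinate form of $N$ that the paper records just above the lemma; and once $|\na J|^{2}=2(\tau^{\ast}-\tau)$ is available, the bound by $\|Rm(g)\|_{C^{0}}$ is immediate since $\tau$ and $\tau^{\ast}$ are fixed algebraic traces of $Rm$ against $g$ and $J$, whose pointwise operator norms depend only on $n$. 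The only substantive divergence is in the middle identity, which is where all the content lies. You propose to extract $\tau^{\ast}-\tau=\frac{1}{2}|\na J|^{2}$ from Gray's formula for $\mathrm{Ric}^{\ast}-\mathrm{Ric}$ and the $U(n)$-decomposition of the curvature tensor; this works, but the route actually used in the cited sources is shorter and avoids that bookkeeping: on an almost K\"ahler manifold $\w$ is a harmonic $2$-form ($d\w=0$ by hypothesis, $d^{\ast}\w=0$ because $\ast\w=\w^{n-1}/(n-1)!$ is closed), so the Weitzenb\"ock formula gives $0=\na^{\ast}\na\w+S(\w)$, and pairing pointwise with $\w$ while using that $|\w|^{2}=n$ is constant (so $\langle\na^{\ast}\na\w,\w\rangle=|\na\w|^{2}$) yields $|\na\w|^{2}=-\langle S(\w),\w\rangle=2(\tau^{\ast}-\tau)$ with no curvature decomposition needed. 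If you keep the Gray-identity route, you should at least name the specific identity being invoked and check the normalization of $\tau^{\ast}$, since that is exactly where the factor of $2$ lives.
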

\subsection{Almost K\"{a}hler identities}
In \cite{CW1,CW2}, the authors extended the K\"{a}hler identities to the non-integrable setting and deduced several geometric and topological consequences. There are some operators $\de=\mu, \pa, \bar{\pa}, \bar{\mu}$ on a smooth almost K\"{a}hler manifold $X$, and $\de$ have adjoint operators $\de^{\ast}$ when $X$ is closed. One can check that $\bar{\pa}^{\ast}=-\ast\pa\ast$, $\pa^{\ast}=-\bar{\pa}\ast$ and $\bar{\mu}^{\ast}=-\ast\mu\ast$, $\mu^{\ast}=-\ast\bar{\mu}\ast$. For any almost K\"{a}hler manifold, there is a $\mathbb{Z}_{2}$-graded Lie algebra of operators action on the $(p,q)$-forms, generated by eight odd operators $\bar{\pa},\pa,\bar{\mu},\mu,\bar{\pa}^{\ast},\pa^{\ast},\bar{\mu}^{\ast},\mu^{\ast}$ and even degree operators $L,\La,H$ (cf. \cite[Section 3]{CW1}).

We define the graded commutator of operators $A$ and $B$ by
$$[A,B]=AB-(-1)^{deg(A)deg(B)}BA$$
where $deg(A)$ denotes the total degree of $A$. On an almost almost K\"{a}hler manifold, Cirici-Wilson \cite{CW1} constructed some identities which called almost K\"{a}hler identities.
\begin{proposition}(\cite[Proposition 3.1]{CW1})\label{P2}
For any almost K\"{a}hler manifold the following identities hold:\\
(1) $[L,\bar{\mu}]=[L,\mu]=0$ and $[\La,\bar{\mu}^{\ast}]=[\La,\mu^{\ast}]=0$.\\
(2)	$[L,\bar{\pa}]=[L,\pa]=0$ and $[\La,\bar{\pa}^{\ast}]=[\La,\pa^{\ast}]=0$.\\
(3) $[L,\bar{\mu}^{\ast}]=\sqrt{-1}\mu$, $[L,\mu^{\ast}]=-\sqrt{-1}\bar{\mu}$ and   $[\La,\bar{\mu}]=\sqrt{-1}\mu^{\ast}$, $[\La,\mu]=-\sqrt{-1}\bar{\mu}^{\ast}$.\\
(4) $[L,\bar{\pa}^{\ast}]=-\sqrt{-1}\pa$, $[L,\pa^{\ast}]=\sqrt{-1}\bar{\pa}$ and   $[\La,\bar{\pa}]=-\sqrt{-1}\pa^{\ast}$, $[\La,\pa]=\sqrt{-1}\bar{\pa}^{\ast}$.
\end{proposition}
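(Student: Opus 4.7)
The plan is to split the eight commutation relations into two tiers: identities (1) and (2) are almost trivial and follow from $d\w=0$ via bidegree counting, while (3) and (4) are substantive and require a local unitary-frame computation into which the Nijenhuis tensor enters through $\mu$ and $\bar{\mu}$.

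For (1) and (2), I would begin with the symplectic condition $d\w=0$. Combined with the fact that $d$ is a graded derivation and $L=\w\wedge(\cdot)$ is tensorial of bidegree $(1,1)$, this forces $[L,d]=0$ on $\Om^{\bullet}(X)$. Decomposing $d=\mu+\pa+\bar{\pa}+\bar{\mu}$ into its four bidegree-homogeneous pieces of bidegrees $(2,-1),(1,0),(0,1),(-1,2)$, the identity $[L,d]=0$ splits into four pieces of pairwise distinct bidegrees, yielding $[L,\mu]=[L,\pa]=[L,\bar{\pa}]=[L,\bar{\mu}]=0$ separately. Taking formal $L^{2}$-adjoints and using $L^{\ast}=\La$ then gives the dual identities $[\La,\mu^{\ast}]=[\La,\pa^{\ast}]=[\La,\bar{\pa}^{\ast}]=[\La,\bar{\mu}^{\ast}]=0$, completing (1) and (2).

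For (3) and (4), I would fix an arbitrary point $x_{0}\in X$ and a local unitary coframe $\{\theta^{1},\ldots,\theta^{n}\}$ of $(1,0)$-forms near $x_{0}$, normalised so that $\w=\sqrt{-1}\sum_{k}\theta^{k}\wedge\bar{\theta}^{k}$. Writing $e^{k}=\theta^{k}\wedge$, $\bar{e}^{k}=\bar{\theta}^{k}\wedge$ and their formal adjoints $\iota^{k},\bar{\iota}^{k}$, one obtains the standard Clifford anticommutation relations together with the explicit expressions $L=\sqrt{-1}\sum_{k}e^{k}\bar{e}^{k}$ and $\La=-\sqrt{-1}\sum_{k}\iota^{k}\bar{\iota}^{k}$. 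Since $\mu$ and $\bar{\mu}$ are tensorial (by Lemma \ref{L7} they are algebraically built from $N$), the identities $[L,\bar{\mu}^{\ast}]=\sqrt{-1}\mu$ and $[L,\mu^{\ast}]=-\sqrt{-1}\bar{\mu}$ in (3) reduce to pointwise algebraic computations in the Clifford algebra, with the constants $\pm\sqrt{-1}$ dictated by the bidegree shifts. For the differential identities $[L,\bar{\pa}^{\ast}]=-\sqrt{-1}\pa$ and $[L,\pa^{\ast}]=\sqrt{-1}\bar{\pa}$ in (4), both sides are first-order operators of matching bidegree; equality of principal symbols is an algebraic check identical to the K\"ahler case, while the zeroth-order remainders cancel because the ``Christoffel-like'' contributions from $d\theta^{k}$ affect only the pieces absorbed by $\mu,\bar{\mu}$. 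The remaining identities in (3) and (4) then follow by complex conjugation (swapping barred and unbarred operators, and $\sqrt{-1}\leftrightarrow -\sqrt{-1}$) together with $L^{2}$-adjunction.

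The main obstacle, absent in the K\"ahler setting, is that one cannot simultaneously choose coordinates making the metric Euclidean to first order and $J$ integrable; consequently the zeroth-order remainders that vanish in the classical K\"ahler proof now carry genuine information. The resolution is that these remainders are precisely encoded by $\mu$ and $\bar{\mu}$ via Lemma~\ref{L7}: the symbolic K\"ahler identity $[L,\bar{\pa}^{\ast}]=-\sqrt{-1}\pa$ holds verbatim, while the ``extra'' $(2,-1)$ and $(-1,2)$ pieces of $d$ combine separately into the new identities $[L,\bar{\mu}^{\ast}]=\sqrt{-1}\mu$ and $[L,\mu^{\ast}]=-\sqrt{-1}\bar{\mu}$. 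Each component of the non-integrability of $J$ is thus absorbed into its own commutator identity, which is the structural content of Proposition~\ref{P2}.
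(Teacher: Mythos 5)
The paper does not prove this proposition at all: it is quoted verbatim from Cirici--Wilson \cite[Proposition 3.1]{CW1}, so there is no internal proof to compare against. Judged on its own terms, your treatment of (1) and (2) is complete and correct: $d\w=0$ gives $[L,d]=0$, the four components $[L,\mu],[L,\pa],[L,\bar{\pa}],[L,\bar{\mu}]$ have pairwise distinct bidegrees $(3,0),(2,1),(1,2),(0,3)$ and hence vanish separately, and adjunction yields the $\La$-identities. Your reduction of (3) to pointwise linear algebra is also legitimate, since $\mu,\bar{\mu}$ are $0$-order (tensorial) operators and $L,\La$ are algebraic, so $[L,\bar{\mu}^{\ast}]-\sqrt{-1}\mu$ can be checked fibrewise in a unitary coframe.

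The gap is in your justification of (4). You assert that the zeroth-order remainders of $[L,\bar{\pa}^{\ast}]+\sqrt{-1}\pa$ cancel ``because the Christoffel-like contributions from $d\theta^{k}$ affect only the pieces absorbed by $\mu,\bar{\mu}$.'' That is not so: only the $(0,2)$-part of $d\theta^{k}$ is governed by the Nijenhuis tensor and absorbed into $\bar{\mu}$; the $(2,0)$- and $(1,1)$-parts of $d\theta^{k}$ contribute genuine zeroth-order terms to $\pa$, $\bar{\pa}$ and their adjoints, and their cancellation in $[L,\bar{\pa}^{\ast}]=-\sqrt{-1}\pa$ is exactly where $d\w=0$ must be used a second time. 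Indeed, on a general almost Hermitian manifold the identities in (4) are false --- they acquire torsion corrections of the type appearing in Demailly's Hermitian commutation relations, with the torsion operator built from the non-closed part of $\w$. A correct completion of your argument must either carry out the frame computation and exhibit the cancellation of these terms using $\pa\w=\bar{\pa}\w=\mu\w=\bar{\mu}\w=0$, or else invoke the torsion-corrected commutation identity and observe that the torsion vanishes precisely in the almost K\"ahler case. As written, the hardest step of the proposition is asserted rather than proved, and for the wrong reason.
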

If $C$ is another operator of degree $deg(C)$, the following Jacobi identity is easy to check
$$(-1)^{deg(C)deg(A)}\big{[}A,[B,C]\big{]}+(-1)^{deg(A)deg(B)}\big{[}B,[C,A]\big{]}+(-1)^{deg(B)deg(C)}\big{[}C,[A,B]\big{]}=0.$$
\begin{proposition}(\cite[Proposition 3.2]{CW1})\label{P3}
For any almost K\"{a}hler manifold the following identities hold:\\
	(1) $[\bar{\mu},\mu^{\ast}]=[\mu,\bar{\mu}^{\ast}]=0$.\\
	(2)	$[\bar{\mu},\pa^{\ast}]=[\bar{\pa},\mu^{\ast}]$ and $[\mu,\bar{\pa}^{\ast}]=[\pa,\bar{\mu}^{\ast}]$.\\
	(3) $[\pa,\bar{\pa}^{\ast}]=[\bar{\mu}^{\ast},\bar{\pa}]+[\mu,\pa^{\ast}]$ and $[\bar{\pa},\pa^{\ast}]=[\mu^{\ast},\pa]+[\bar{\mu},\bar{\pa}^{\ast}]$.
\end{proposition}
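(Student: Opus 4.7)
The plan is to derive (1)--(3) by combining Proposition \ref{P2} with the graded Jacobi identity and the expansion of $d^{2}=0$. The unifying observation is that Proposition \ref{P2} expresses every adjoint $\mu^{\ast},\bar{\mu}^{\ast},\pa^{\ast},\bar{\pa}^{\ast}$ as $\pm\sqrt{-1}$ times a commutator of $\La$ with one of $\mu,\bar{\mu},\pa,\bar{\pa}$. Therefore a bracket of an adjoint with one of these four operators can be expanded by Jacobi into brackets that either (a) involve only $\mu,\bar{\mu},\pa,\bar{\pa}$ and hence are controlled by $d^{2}=0$, or (b) return the original commutator, giving a short linear equation that one solves. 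Throughout, I treat $\mu,\bar{\mu},\pa,\bar{\pa}$ and their adjoints as odd operators (total degree $\pm 1$) and $L,\La,H$ as even, and use the Jacobi identity in the graded form stated before Proposition \ref{P3}.

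For (1), to prove $[\bar{\mu},\mu^{\ast}]=0$ I substitute $\mu^{\ast}=-\sqrt{-1}[\La,\bar{\mu}]$ and expand $[\bar{\mu},[\La,\bar{\mu}]]$ by Jacobi. The term containing $[\bar{\mu},\bar{\mu}]=2\bar{\mu}^{2}$ vanishes, while the remaining two terms each reproduce $[\bar{\mu},[\La,\bar{\mu}]]$ via graded antisymmetry of the bracket; comparing forces $[\bar{\mu},\mu^{\ast}]=0$. The identity $[\mu,\bar{\mu}^{\ast}]=0$ follows by complex conjugation. For (2), I write $\pa^{\ast}=\sqrt{-1}[\La,\bar{\pa}]$ and apply Jacobi to $[\bar{\mu},[\La,\bar{\pa}]]$. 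The bracket $[\bar{\mu},\bar{\pa}]$ vanishes because $\bar{\mu}\bar{\pa}+\bar{\pa}\bar{\mu}=0$, and $[\bar{\mu},\La]=-\sqrt{-1}\mu^{\ast}$ by Proposition \ref{P2}, so the identity collapses directly to $[\bar{\mu},\pa^{\ast}]=[\bar{\pa},\mu^{\ast}]$. The paired identity $[\mu,\bar{\pa}^{\ast}]=[\pa,\bar{\mu}^{\ast}]$ is the complex conjugate.

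For (3), which is the most delicate case because $\pa^{2}\neq 0$ in general, I write $\bar{\pa}^{\ast}=-\sqrt{-1}[\La,\pa]$ and expand $[\pa,[\La,\pa]]$ via Jacobi. This yields three contributions: the term $[[\pa,\La],\pa]$, which reintroduces $[\pa,\bar{\pa}^{\ast}]$ (so the relation is linear in the quantity being computed); the term $[\La,[\pa,\pa]]=2[\La,\pa^{2}]$; and, using the $d^{2}=0$ relation $\pa^{2}=-\mu\bar{\pa}-\bar{\pa}\mu$, I replace $2\pa^{2}$ and apply the graded Leibniz rule $[\La,XY]=[\La,X]Y+X[\La,Y]$ ($\La$ being even). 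Invoking Proposition \ref{P2} to evaluate $[\La,\mu]=-\sqrt{-1}\bar{\mu}^{\ast}$ and $[\La,\bar{\pa}]=-\sqrt{-1}\pa^{\ast}$ converts $[\La,\mu\bar{\pa}+\bar{\pa}\mu]$ into a constant multiple of $[\bar{\mu}^{\ast},\bar{\pa}]+[\mu,\pa^{\ast}]$. Solving the resulting $2\times 2$ linear relation for $[\pa,\bar{\pa}^{\ast}]$ produces the first identity; the second, $[\bar{\pa},\pa^{\ast}]=[\mu^{\ast},\pa]+[\bar{\mu},\bar{\pa}^{\ast}]$, follows by an identical computation starting from $\pa^{\ast}=\sqrt{-1}[\La,\bar{\pa}]$ and the conjugate relation $\bar{\pa}^{2}=-\bar{\mu}\pa-\pa\bar{\mu}$.

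The main obstacle is sign bookkeeping: the graded Jacobi identity and the graded Leibniz rule generate many $(-1)^{|A||B|}$ factors, and one must consistently treat $\mu^{\ast},\bar{\mu}^{\ast},\pa^{\ast},\bar{\pa}^{\ast}$ as total-degree-$(-1)$ odd operators while $\La$ has even degree $-2$. Once the conventions are fixed, each of (1)--(3) reduces to the short manipulation sketched above; there is no analytic input, only the algebra of the operators $\mu,\bar{\mu},\pa,\bar{\pa},L,\La$ coming from the almost K\"{a}hler structure.
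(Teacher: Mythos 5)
Your proposal is correct: the paper offers no proof of its own for this proposition (it is quoted verbatim from Cirici--Wilson, \cite[Proposition~3.2]{CW1}), and your argument reconstructs precisely the strategy of that reference --- substitute the expressions $\mu^{\ast}=-\sqrt{-1}[\La,\bar{\mu}]$, $\pa^{\ast}=\sqrt{-1}[\La,\bar{\pa}]$, $\bar{\pa}^{\ast}=-\sqrt{-1}[\La,\pa]$, etc.\ from Proposition~\ref{P2}, expand by the graded Jacobi identity, and feed in the components of $d^{2}=0$. I checked the sign bookkeeping in all three parts (in particular the collapse $[\pa,[\La,\pa]]=[\La,\pa^{2}]$ and the Leibniz expansion of $[\La,\mu\bar{\pa}+\bar{\pa}\mu]$) and it comes out as you claim, so nothing is missing.
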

In \cite{CW1}, they also gave several relations concerning various Laplacians.
\begin{proposition}(\cite[Proposition 3.3]{CW1})\label{P4}
For any almost K\"{a}hler manifold the following identities hold:\\
(1) $\De_{\bar{\mu}+\mu}=\De_{\bar{\mu}}+\De_{\mu}$.\\
(2)	$\De_{\bar{\pa}}+\De_{\mu}=\De_{\pa}+\De_{\bar{\mu}}.$\\
(3) $\De_{d}=2(\De_{\bar{\pa}}+\De_{\mu}+[\bar{\mu},\pa^{\ast}]+[\mu,\bar{\pa}^{\ast}]+[\pa,\bar{\pa}^{\ast}]+[\bar{\pa},\pa^{\ast}])$.
\end{proposition}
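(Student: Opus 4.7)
The plan is to derive each of the three identities as a formal consequence of the almost K\"{a}hler identities (Proposition \ref{P2}), the commutator relations (Proposition \ref{P3}), the equation $d^{2}=0$, and the graded Jacobi identity. Identity (1) is a direct expansion, (2) is the substantive part and is proved by a two-fold application of Jacobi, and (3) is a bookkeeping consolidation that uses (2) together with all three parts of Proposition \ref{P3}.

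For (1), I expand
$$\De_{\bar{\mu}+\mu} = (\bar{\mu}+\mu)(\bar{\mu}^{\ast}+\mu^{\ast}) + (\bar{\mu}^{\ast}+\mu^{\ast})(\bar{\mu}+\mu)$$
into eight pieces. The diagonal terms assemble into $\De_{\bar{\mu}}+\De_{\mu}$, while the four remaining terms group into the graded commutators $[\bar{\mu},\mu^{\ast}]+[\mu,\bar{\mu}^{\ast}]$ (anticommutators since all the operators are odd), both of which vanish by Proposition \ref{P3}(1).

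For (2), the idea is to compute $\De_{\bar{\pa}}-\De_{\pa}$ in two ways. Using Proposition \ref{P2}(4) to substitute $\pa^{\ast}=\sqrt{-1}[\La,\bar{\pa}]$ and $\bar{\pa}^{\ast}=-\sqrt{-1}[\La,\pa]$ inside the definitions $\De_{\pa}=[\pa,\pa^{\ast}]$ and $\De_{\bar{\pa}}=[\bar{\pa},\bar{\pa}^{\ast}]$, the graded Jacobi identity yields
$$\sqrt{-1}\bigl(\De_{\bar{\pa}}-\De_{\pa}\bigr)=[\La,[\pa,\bar{\pa}]].$$
From $d^{2}=0$, the $(1,1)$-bidegree component of the expansion of $d^{2}$ listed after Lemma \ref{L7} gives $[\pa,\bar{\pa}]=-[\mu,\bar{\mu}]$. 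Repeating the Jacobi calculation, now with Proposition \ref{P2}(3) to substitute $\mu^{\ast}$ and $\bar{\mu}^{\ast}$ as commutators of $\La$ with $\bar{\mu}$ and $\mu$, I obtain
$$[\La,[\mu,\bar{\mu}]]=\sqrt{-1}\bigl(\De_{\mu}-\De_{\bar{\mu}}\bigr).$$
Combining the two expressions gives $\De_{\bar{\pa}}-\De_{\pa}=\De_{\bar{\mu}}-\De_{\mu}$, which rearranges to (2).

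For (3), expand $\De_{d}=dd^{\ast}+d^{\ast}d$ with $d=\mu+\pa+\bar{\pa}+\bar{\mu}$ into sixteen pieces, grouping them into the four diagonal anticommutators $[\delta,\delta^{\ast}]=\De_{\delta}$ and the twelve off-diagonal anticommutators $[\delta,{\delta'}^{\ast}]$. The diagonal sum $\De_{\mu}+\De_{\pa}+\De_{\bar{\pa}}+\De_{\bar{\mu}}$ equals $2(\De_{\bar{\pa}}+\De_{\mu})$ by identity (2). Among the twelve off-diagonal brackets, Proposition \ref{P3}(1) eliminates $[\mu,\bar{\mu}^{\ast}]$ and $[\bar{\mu},\mu^{\ast}]$; Proposition \ref{P3}(2) combines $[\bar{\mu},\pa^{\ast}]+[\bar{\pa},\mu^{\ast}]=2[\bar{\mu},\pa^{\ast}]$ and $[\mu,\bar{\pa}^{\ast}]+[\pa,\bar{\mu}^{\ast}]=2[\mu,\bar{\pa}^{\ast}]$; and Proposition \ref{P3}(3) rewrites the remaining mixed terms $[\mu,\pa^{\ast}]$ and $[\pa,\mu^{\ast}]$ in terms of $[\pa,\bar{\pa}^{\ast}]$, $[\bar{\pa},\pa^{\ast}]$ and the already-handled $[\bar{\pa},\bar{\mu}^{\ast}]$, $[\bar{\mu},\bar{\pa}^{\ast}]$ terms. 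Collecting yields a factor of $2$ in front of each of the four bracket terms appearing in the statement. The main obstacle is the bookkeeping for (3), where sixteen terms with different sign conventions must be tracked without error; the only genuinely new idea is the two-step Jacobi argument for (2), which is the almost K\"{a}hler analogue of the standard derivation of the K\"{a}hler identity $\De_{d}=2\De_{\bar{\pa}}$ in the integrable case.
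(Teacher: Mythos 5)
Your proof is correct: part (1) is the straightforward expansion killed by Proposition \ref{P3}(1), the two-fold Jacobi argument for (2) (using $[\pa,\bar{\pa}]=-[\mu,\bar{\mu}]$ from $d^{2}=0$ and Proposition \ref{P2}(3)--(4)) gives $\De_{\bar{\pa}}-\De_{\pa}=\De_{\bar{\mu}}-\De_{\mu}$ with the signs working out, and the sixteen-term bookkeeping for (3) collapses correctly via Proposition \ref{P3}. The paper does not prove this statement itself --- it is quoted from \cite[Proposition 3.3]{CW1} --- and your argument is essentially a faithful reconstruction of the proof given there.
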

\subsection{Cohomology on almost K\"{a}hler manifold}
We define the second differential operator 
$$d^{\La}:=[d,\La]=d\La-\La d$$
on a smooth almost K\"{a}hler manifold. Following \cite[Lemma 2.9]{TY1}, we have
$$d^{\La}=-\ast J^{-1}dJ\ast.$$
where $J=\sum_{p,q}(\sqrt{-1})^{p-q}\Pi^{p,q}$ projects a $k$-form onto its $(p,q)$-part times the multiplicative factor $(\sqrt{-1})^{p-q}$. For the adjoint operator of $d^{\La}$ with respect to Hodge star $\ast$, we have 
$$d^{\La\ast}=J^{-1}dJ=[L,d^{\ast}].$$
Since $(d^{\La})^{2}=0$, there is a natural cohomology
$$H^{k}_{d^{\La}}(X)=\frac{\ker d^{\La}\cap\Om^{k} }{\mathrm{Im}d^{\La}\cap\Om^{k} }.$$
This cohomology has been studied in \cite{Bry,Mat,Yan}. In \cite{Bry}, the author shown that $H^{k}_{d}(X)$ is isomorphism to $H^{2n-k}_{d^{\La}}(X)$ through the symplectic $\ast_{s}$ operator. We denote
$$\De_{d^{\La}}=d^{\La}d^{\La\ast}+d^{\La\ast}d^{\La}$$
by the $d^{\La}$ Laplacian operator. Applying elliptic theory, we get
$$H^{k}_{d^{\La}}(X)\cong\mathcal{H}^{k}_{d^{\La}}(X),$$
where 
$$\mathcal{H}^{k}_{d^{\La}}:=\{\a\in\Om^{k}(X):\De_{d^{\La}}\a=0  \}$$
is the space of $d^{\La}$-harmonic $k$-forms.

There are another two cohomologies which called $d+d^{\La}$- and $dd^{\La}$-cohomology on closed almost K\"{a}hler manifold (see \cite{TY1,TY2,TY3}).\\ 
(1) $d+d^{\La}$-cohomology: $$H^{k}_{d+d^{\La}}(X)=\frac{\ker(d+d^{\La})\cap\Om^{k}(X)  }{\mathrm{Im}dd^{\La}\cap\Om^{k}(X) }.$$
(2) $dd^{\La}$-cohomolgy:
$$H^{k}_{dd^{\La}}(X)=\frac{\ker(dd^{\La})\cap\Om^{k}(X)  }{(\mathrm{Im}d+\mathrm{Im}d^{\La})\cap\Om^{k}(X) }.$$
We then get 
$$H^{k}_{d+d^{\La}}(X)\cong\mathcal{H}^{k}_{d+d^{\La}}(X),\ H^{k}_{dd^{\La}}(X)\cong\mathcal{H}^{k}_{dd^{\La}}(X),$$
and 
$$H^{k}_{d+d^{\La}}(X)\cong H^{2n-k}_{dd^{\La}}(X),$$
where 
$$\mathcal{H}^{k}_{d+d^{\La}}(X)=\{\a\in\Om^{k}(X):(dd^{\La})^{\ast}\a=0,\ and\ d\a=d^{\La}\a=0 \}$$
is the space of $d+d^{\La}$-harmonic $k$-forms and 
$$\mathcal{H}^{k}_{dd^{\La}}(X)=\{\a\in\Om^{k}(X):(dd^{\La})\a=0,\ and\ d^{\ast}\a=d^{\La\ast}\a=0 \}$$
is the space of $dd^{\La}$-harmonic $k$-forms. If $(X,J,\w)$ satisfies $dd^{\La}$-Lemma, then $\dim\mathcal{H}^{k}_{d+d^{\La}}(X)=\dim\mathcal{H}^{k}_{d}(X)$ for all $k$. Furthermore, if $X$ is K\"{a}hlerian, i.e., $J$ is integrable, then one can see that $\mathcal{H}^{k}_{d+d^{\La}}(X)=\mathcal{H}^{k}_{d}(X)$. 
\begin{proposition}\label{P6}(cf. \cite[Proposition 5.1]{ST})
Let $(X,J,\w)$ be a closed $2n$-dimensional  almost K\"{a}hler manifold. We then have $$\mathcal{H}^{1}_{d+d^{\La}}=\mathcal{H}^{1}_{d}.$$
\end{proposition}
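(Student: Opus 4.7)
The plan is to prove the two inclusions separately. The inclusion $\mathcal{H}^{1}_{d}\subset\mathcal{H}^{1}_{d+d^{\La}}$ is essentially automatic, while the content of the proposition lies in the reverse inclusion, which reduces to showing $d^{\ast}\a=0$ for every $\a\in\mathcal{H}^{1}_{d+d^{\La}}$.

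For the easy inclusion, I would observe that for any $1$-form $\a$ one has $\La\a=0$ trivially, since $\La$ lowers the form degree by two. Therefore $d^{\La}\a=[d,\La]\a=-\La d\a$, and $d\a=0$ forces $d^{\La}\a=0$; moreover if $d^{\ast}\a=0$ then $(dd^{\La})^{\ast}\a=d^{\La\ast}d^{\ast}\a=0$. All three defining conditions of $\mathcal{H}^{1}_{d+d^{\La}}$ are thus satisfied by any $d$-harmonic $1$-form.

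For the reverse inclusion, let $\a\in\mathcal{H}^{1}_{d+d^{\La}}$ and set $f:=d^{\ast}\a\in\Om^{0}(X)$. Using $d^{\La\ast}=[L,d^{\ast}]$ and the fact that $d^{\ast}f=0$ on a function, I would compute
\begin{equation*}
0=(dd^{\La})^{\ast}\a=d^{\La\ast}f=Ld^{\ast}f-d^{\ast}Lf=-d^{\ast}(f\w).
\end{equation*}
Applying $d^{\ast}=-\ast d\ast$ together with $\ast\w=\w^{n-1}/(n-1)!$ and $d\w^{n-1}=0$, this equality unpacks to $df\wedge\w^{n-1}=0$. Since every $1$-form is primitive (as $\La$ vanishes on $\Om^{1}$ for trivial degree reasons), the pointwise Lefschetz isomorphism $L^{n-1}:\Om^{1}\to\Om^{2n-1}$ forces $df=0$. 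Hence $f$ is locally constant, and integrating over the closed manifold yields
\begin{equation*}
\int_{X}f\,\tfrac{\w^{n}}{n!}=\langle d^{\ast}\a,1\rangle_{L^{2}}=\langle\a,d1\rangle_{L^{2}}=0,
\end{equation*}
so $f\equiv0$. Combined with $d\a=0$, this gives $\a\in\mathcal{H}^{1}_{d}$.

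The main step is the passage from $d^{\ast}(f\w)=0$ to $f=0$, which rests on the pointwise primitivity of $1$-forms together with the compactness of $X$; in contrast with the main theorems of the paper, no smallness assumption on the Nijenhuis tensor enters here, which is why the statement holds in full generality on any closed almost K\"ahler manifold.
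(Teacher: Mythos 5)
Your proof is correct, and it takes a genuinely different route from the paper's. The paper proves the inclusion $\mathcal{H}^{1}_{d+d^{\La}}\subset\mathcal{H}^{1}_{d}$ by first invoking the Hodge decomposition to write $\a=df+\a_{h}$, then the $\De_{d^{\La}}$-Hodge decomposition on functions to write $f=d^{\La}\b+c$, and finally deducing $dd^{\La}\b=0$ from $0=(dd^{\La})^{\ast}dd^{\La}\b$; your argument instead works directly with $f:=d^{\ast}\a$, identifying $(dd^{\La})^{\ast}\a=d^{\La\ast}d^{\ast}\a=-d^{\ast}(f\w)$ and unpacking this via $\ast\w=\w^{n-1}/(n-1)!$ to $L^{n-1}(df)=0$, whence $df=0$ by the pointwise Lefschetz isomorphism on $1$-forms. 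Your route is more elementary and self-contained: it avoids the elliptic theory for $\De_{d^{\La}}$ on functions (and in fact directly produces $d^{\ast}\a=0$, whereas the paper concludes via $\a=\a_{h}$), at the cost of the small explicit computation with the Hodge star. The only point to tidy is the last step: ``$f$ locally constant and $\int_{X}f=0$'' forces $f\equiv 0$ only when $X$ is connected; in general one should pair $d^{\ast}\a$ with the indicator function of each connected component (which is smooth) to kill $f$ componentwise. This is cosmetic and does not affect the validity of the argument.
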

\begin{proof}
Let $\a\in\mathcal{H}^{1}_{d}(X)$, i.e., $d\a=0$ and $d^{\ast}\a=0$. Noting that $d^{\La}\a=[d,\La]\a=0$, we therefore have $\mathcal{H}^{1}_{d}\subset\mathcal{H}^{1}_{d+d^{\La}}$.  On the other hand, let $\a\in\mathcal{H}^{1}_{d+d^{\La}}(X)$, i.e., $d\a=0$ and $(dd^{\La})^{\ast}\a=0$. Then by the Hodge decomposition, there exists a function $f$ and a harmonic $1$-form such that 
$$\a=df+\a_{h}.$$
By the decomposition of $d^{\La}$-cohomology, we get 
$$f=d^{\La}\b+c$$
where $c$ is a constant and $\b$ is a $1$-form. Therefore,
$$0=(dd^{\La})^{\ast}\a=(dd^{\La})^{\ast}dd^{\La}\b\Longrightarrow dd^{\La}\b=0,$$
i.e., $df=0$. It implies that $\mathcal{H}^{1}_{d+d^{\La}}\subset\mathcal{H}^{1}_{d}$.
\end{proof}

\section{Hodge decomposition on almost K\"{a}hler manifold }
\subsection{An estimate on $\ker(\De_{\bar{\pa}}+\De_{\mu})$}
For closed almost K\"{a}hler manifolds,  we have (see Proposition \ref{P4})
\begin{equation}\label{E7}
\De_{\bar{\pa}}+\De_{\mu}=\De_{\pa}+\De_{\bar{\mu}}.
\end{equation}
Hence, Cirici and Wilson proved:
\begin{theorem}(\cite[Theorem 4.1]{CW1})\label{T3}
For any closed almost K\"{a}hler manifold of dimension $2n$ and for all $(p,q)$, we have identities
$$\mathcal{H}^{p,q}_{d}=\mathcal{H}^{p,q}_{\bar{\pa}}\cap\mathcal{H}^{p,q}_{\mu}=\mathcal{H}^{p,q}_{\pa}\cap\mathcal{H}^{p,q}_{\bar{\mu}}.$$	
Furthermore, 
$$\mathcal{H}^{k}_{\bar{\pa}}\cap\mathcal{H}^{k}_{\mu}=\bigoplus_{p+q=k}\mathcal{H}^{p,q}_{d}.$$
\end{theorem}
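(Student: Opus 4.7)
The plan is to decouple the statement into two logically independent parts: first, the pointwise bidegree identity $\mathcal{H}^{p,q}_{d}=\mathcal{H}^{p,q}_{\bar{\pa}}\cap\mathcal{H}^{p,q}_{\mu}=\mathcal{H}^{p,q}_{\pa}\cap\mathcal{H}^{p,q}_{\bar{\mu}}$, and second, the global statement that $\mathcal{H}^{k}_{\bar{\pa}}\cap\mathcal{H}^{k}_{\mu}$ decomposes as $\bigoplus_{p+q=k}\mathcal{H}^{p,q}_{d}$. The whole argument rests on two inputs already available: Proposition \ref{P4}(2), which says $\De_{\bar{\pa}}+\De_{\mu}=\De_{\pa}+\De_{\bar{\mu}}$, and the positivity of each $\De_{\de}=\de\de^{\ast}+\de^{\ast}\de$.

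For the bidegree identity, I would exploit the fact that on a form $\a\in\Om^{p,q}$ the four components $\mu\a,\pa\a,\bar{\pa}\a,\bar{\mu}\a$ sit in the four distinct bidegrees $(p+2,q-1),(p+1,q),(p,q+1),(p-1,q+2)$, and similarly for the adjoints. Hence $\|d\a\|^{2}+\|d^{\ast}\a\|^{2}=\sum_{\de}(\|\de\a\|^{2}+\|\de^{\ast}\a\|^{2})$, so $\a\in\mathcal{H}^{p,q}_{d}$ iff $\a$ lies simultaneously in $\mathcal{H}^{p,q}_{\de}$ for all four $\de\in\{\mu,\pa,\bar{\pa},\bar{\mu}\}$. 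This gives the trivial inclusion $\mathcal{H}^{p,q}_{d}\subset\mathcal{H}^{p,q}_{\bar{\pa}}\cap\mathcal{H}^{p,q}_{\mu}$. For the reverse, the key point is Proposition \ref{P4}(2): if $\a\in\mathcal{H}^{p,q}_{\bar{\pa}}\cap\mathcal{H}^{p,q}_{\mu}$ then $(\De_{\bar{\pa}}+\De_{\mu})\a=0$, hence $(\De_{\pa}+\De_{\bar{\mu}})\a=0$, and pairing with $\a$ forces $\langle\De_{\pa}\a,\a\rangle+\langle\De_{\bar{\mu}}\a,\a\rangle=0$. Since each summand is non-negative, both vanish, so $\a\in\mathcal{H}^{p,q}_{\pa}\cap\mathcal{H}^{p,q}_{\bar{\mu}}$. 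Combining, $\a$ lies in all four spaces and therefore in $\mathcal{H}^{p,q}_{d}$.

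For the global statement, I would observe that each of $\De_{\bar{\pa}}=\bar{\pa}\bar{\pa}^{\ast}+\bar{\pa}^{\ast}\bar{\pa}$ and $\De_{\mu}=\mu\mu^{\ast}+\mu^{\ast}\mu$ preserves bidegree, because $\bar{\pa}$ and $\bar{\pa}^{\ast}$ shift $(p,q)$ by $(0,\pm 1)$ and $\mu,\mu^{\ast}$ shift by $(\pm 2,\mp 1)$ in opposite directions. Consequently the kernels split along bidegrees:
\begin{equation*}
\mathcal{H}^{k}_{\bar{\pa}}=\bigoplus_{p+q=k}\mathcal{H}^{p,q}_{\bar{\pa}},\qquad \mathcal{H}^{k}_{\mu}=\bigoplus_{p+q=k}\mathcal{H}^{p,q}_{\mu}.
\end{equation*}
Intersecting and using the first part gives $\mathcal{H}^{k}_{\bar{\pa}}\cap\mathcal{H}^{k}_{\mu}=\bigoplus_{p+q=k}(\mathcal{H}^{p,q}_{\bar{\pa}}\cap\mathcal{H}^{p,q}_{\mu})=\bigoplus_{p+q=k}\mathcal{H}^{p,q}_{d}$, as claimed.

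The only substantive ingredient is Proposition \ref{P4}(2); once that identity is in hand, the rest is formal manipulation with positivity and bidegree bookkeeping. Thus the main obstacle has really been absorbed into the earlier derivation of the almost K\"{a}hler identities, and nothing further is required here.
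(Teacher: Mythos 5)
Your argument is correct. The paper does not actually prove this statement---it is imported verbatim from Cirici--Wilson \cite[Theorem 4.1]{CW1}---but your reconstruction (orthogonality of the four bidegree components of $d\a$ and $d^{\ast}\a$ for $\a\in\Om^{p,q}$, the identity $\De_{\bar{\pa}}+\De_{\mu}=\De_{\pa}+\De_{\bar{\mu}}$ of Proposition \ref{P4}(2) combined with positivity of each $\De_{\de}$, and the fact that $\De_{\bar{\pa}}$ and $\De_{\mu}$ preserve bidegree so their kernels split as $\bigoplus_{p+q=k}$) is precisely the standard argument of the cited source, with no gaps.
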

It is well known that for a closed complex manifold with Hermitian metric, Dolbeault cohomology is isomorphic to the space of $\bar{\pa}$-harmonic forms, defined as the kernel of the $\bar{\pa}$-Laplacian $\De_{\bar{\pa}}:=\bar{\pa}\bar{\pa}^{\ast}+\bar{\pa}^{\ast}\bar{\pa}$. In the almost K\"{a}hler setting, we have
	\begin{proposition}(cf. \cite{CW2})\label{P1}
	Let $(X,J,\w)$ be a closed almost K\"{a}hler manifold. We have a decomposition as follows:
	$$\Om^{k}(X)=\ker(\De_{\bar{\pa}}+\De_{\mu})\cap\Om^{k}(X)\bigoplus \mathrm{Im}(\De_{\bar{\pa}}+\De_{\mu})\cap\Om^{k}(X).$$
	In particular,
	$$\Om^{k}(X)=\bigoplus_{p+q=k}\mathcal{H}^{p,q}_{d}(X)\bigoplus \mathrm{Im}(\De_{\bar{\pa}}+\De_{\mu})\cap\Om^{k}(X).$$
\end{proposition}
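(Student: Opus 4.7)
My plan is to obtain the decomposition from the standard elliptic Hodge package once it is established that $\De_{\bar{\pa}}+\De_{\mu}$ is a formally self-adjoint, second-order elliptic operator on $\Om^{k}(X)$. The kernel will then be identified using the bi-degree decomposition together with Theorem \ref{T3}.

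The central analytic input is ellipticity. By Lemma \ref{L7}, $\mu$ and $\bar{\mu}$ are zeroth-order operators (algebraic contractions with the Nijenhuis tensor), so $\De_{\mu}=\mu\mu^{\ast}+\mu^{\ast}\mu$ is a bundle endomorphism and does not contribute to the principal symbol. Consequently $\sigma_{2}(\De_{\bar{\pa}}+\De_{\mu})=\sigma_{2}(\De_{\bar{\pa}})$, and the standard pointwise calculation---the symbol of $\bar{\pa}$ at a cotangent vector $\xi$ is exterior multiplication by $\xi^{0,1}$, that of $\bar{\pa}^{\ast}$ is its formal adjoint, and the wedge-contraction identity yields $\sigma_{2}(\De_{\bar{\pa}})(\xi)=|\xi^{0,1}|^{2}\,\mathrm{Id}$---shows that this symbol is positive for $\xi\neq0$, giving ellipticity. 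Formal self-adjointness is immediate from the definition. As a byproduct, one may also write $2(\De_{\bar{\pa}}+\De_{\mu})=(\De_{\bar{\pa}}+\De_{\pa})+(\De_{\mu}+\De_{\bar{\mu}})$ using Proposition \ref{P4}(2) to double-check the symbol. Invoking classical Hodge theory for self-adjoint elliptic operators on the closed manifold $X$ then produces the first $L^{2}$-orthogonal splitting.

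For the refinement, I would observe that each of $\bar{\pa},\bar{\pa}^{\ast},\mu,\mu^{\ast}$ shifts bi-degree in a way that cancels inside the corresponding Laplacian, so both $\De_{\bar{\pa}}$ and $\De_{\mu}$ preserve $\Om^{p,q}$, and therefore $\ker(\De_{\bar{\pa}}+\De_{\mu})$ respects the type decomposition $\Om^{k}(X,\C)=\bigoplus_{p+q=k}\Om^{p,q}$. A form $\a\in\Om^{p,q}$ lies in the kernel iff
\begin{equation*}
0=\langle(\De_{\bar{\pa}}+\De_{\mu})\a,\a\rangle_{L^{2}}=\|\bar{\pa}\a\|^{2}+\|\bar{\pa}^{\ast}\a\|^{2}+\|\mu\a\|^{2}+\|\mu^{\ast}\a\|^{2},
\end{equation*}
equivalently $\a\in\mathcal{H}^{p,q}_{\bar{\pa}}\cap\mathcal{H}^{p,q}_{\mu}$, which equals $\mathcal{H}^{p,q}_{d}$ by Theorem \ref{T3}. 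Summing over $p+q=k$ gives the second identity; the passage to real forms poses no issue since the kernel is closed under complex conjugation. The main obstacle is really just the ellipticity check; everything else is a routine application of the elliptic Hodge package combined with Theorem \ref{T3}.
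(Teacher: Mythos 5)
Your proof is correct and follows essentially the same route as the paper: show that $\De_{\bar{\pa}}+\De_{\mu}$ is a formally self-adjoint second-order elliptic operator by a principal-symbol computation in which the zeroth-order pieces coming from $\mu,\bar{\mu}$ drop out, invoke classical Hodge theory on the closed manifold for the first splitting, and identify the kernel with $\bigoplus_{p+q=k}\mathcal{H}^{p,q}_{d}$ via Theorem \ref{T3}. The only cosmetic difference is in the ellipticity check: the paper symmetrizes using $\De_{\bar{\pa}}+\De_{\mu}=\De_{\pa}+\De_{\bar{\mu}}$ and compares the symbol with $\tfrac{1}{2}\De_{d}$, whereas you compute $\sigma_{2}(\De_{\bar{\pa}})(\xi)=|\xi^{0,1}|^{2}\,\mathrm{Id}$ directly (and you add the harmless but welcome remark that the kernel is conjugation-invariant, so the statement descends to real forms).
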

\begin{proof}
	By (\ref{E7}),  we have
	\begin{equation*}
	\begin{split}
	\De_{\bar{\pa}}+\De_{\mu}&=\frac{1}{2}\left(\De_{\bar{\pa}}+\De_{\mu}\right)+\frac{1}{2}\left(\De_{\pa}+\De_{\bar{\mu}}\right)\\
	&\simeq \frac{1}{2}\left(\bar{\pa}\bar{\pa}^{\ast}+\bar{\pa}^{\ast}\bar{\pa}\right)+\frac{1}{2}\left(\pa\pa^{\ast}+\pa^{\ast}\pa\right)\\
	&\simeq \frac{1}{2}\De_{d}.
	\end{split}
	\end{equation*}
	where $\simeq$ denotes equivalence under symbol calculation.
	Hence, the operator $\De_{\bar{\pa}}+\De_{\mu}:\Om^{k}(X)\rightarrow\Om^{k}(X)$ is a 2-order elliptic operator.  Since this operator is also self-adjoint, we can get the decomposition (cf. \cite{Dem})
	$$\Om^{k}(X)=\ker(\De_{\bar{\pa}}+\De_{\mu})\cap\Om^{k}(X)\bigoplus \mathrm{Im}(\De_{\bar{\pa}}+\De_{\mu})\cap\Om^{k}(X).$$
	Furthermore, the relation $\ker(\De_{\bar{\pa}}+\De_{\mu})\cap\Om^{k}=\ker(\De_{d})\cap\oplus_{p+q=k}\Om^{p,q}$ gives that
	$$\Om^{k}(X)=\ker(\De_{d})\cap\oplus_{p+q=k}\Om^{p,q}(X)\bigoplus \mathrm{Im}(\De_{\bar{\pa}}+\De_{\mu})\cap\Om^{k}(X).$$
\end{proof}
\begin{remark}
By Proposition \ref{P1}, for any $\a^{k}\in\Om^{k}(X)$, we can write
	\begin{equation*}
	\a^{k}=\sum_{p+q=k}(\a^{p,q}_{h}+\b^{p,q}),
	\end{equation*}
	where $\a^{p,q}_{h}\in\ker\De_{d}$ and $\b^{p,q}\in(\mathcal{H}^{p,q}_{d})^{\bot}$.
	We denote $\b^{k}:=\sum_{p+q=k}\b^{p,q}$. Noting that $\oplus_{p+q=k}\mathcal{H}^{p,q}_{d}\subset\mathcal{H}^{k}_{d}$. Then $\b^{k}=(\De_{\bar{\pa}}+\De_{\mu})\tilde{\b}^{k}$ may be not in $(\mathcal{H}^{k}_{d})^{\bot}$. Since for any $\gamma\in\mathcal{H}^{k}_{d}$, we get
	$$\langle\b^{k},\gamma\rangle_{L^{2}}=\langle(\De_{\bar{\pa}}+\De_{\mu})\tilde{\b}^{k},\gamma\rangle_{L^{2}}=\langle\tilde{\b}^{k},(\De_{\bar{\pa}}+\De_{\mu})\gamma\rangle_{L^{2}}.$$
	But $(\De_{\bar{\pa}}+\De_{\mu})\gamma$ may be not zero.
\end{remark} 
\begin{proposition}\label{P9}
Suppose that the closed almost K\"{a}hler manifold $X$ is in $\mathcal{M}(k,c)$.  Then for any $\a^{k}:=\a^{k}_{h}+\b^{k}\in\Om^{k}(X)$ 
with $\a^{k}_{h}\in\oplus_{p+q=k}\mathcal{H}^{p,q}_{d}(X)$ and $\b^{k}\in(\oplus_{p+q=k}\mathcal{H}^{p,q}_{d}(X))^{\perp}$, we have
\begin{equation}\label{E4}
\langle\De_{d}\a^{k},\a^{k}\rangle_{L^{2}}\geq c_{2}(c-c_{1})\langle(\De_{\mu}+\De_{\bar{\mu}})\b^{k},\b^{k}\rangle_{L^{2}},
\end{equation}	
where $c_{1}, c_{2}$ are two uniform positive constants. In particular, if $\gamma^{k}\in(\oplus_{p+q=k}\mathcal{H}^{p,q}_{d}(X))^{\perp}$, then
\begin{equation}\label{E8}
\langle\De_{d}\gamma^{k},\gamma^{k}\rangle_{L^{2}}\geq c_{2}(c-c_{1})\langle(\De_{\mu}+\De_{\bar{\mu}})\gamma^{k},\gamma^{k}\rangle_{L^{2}}.
\end{equation}
\end{proposition}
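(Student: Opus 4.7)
The plan is first to reduce the general inequality (\ref{E4}) to the special case (\ref{E8}), and then to prove (\ref{E8}) by unpacking $\De_d$ via Proposition \ref{P4}(3) and estimating the resulting cross terms through Cauchy--Schwarz. For the reduction, note that $\a^k_h \in \bigoplus_{p+q=k}\H^{p,q}_d$ is $d$-harmonic, so $\De_d\a^k_h=0$. Moreover, Theorem \ref{T3} identifies $\H^{p,q}_d = \H^{p,q}_{\bar{\pa}}\cap\H^{p,q}_\mu = \H^{p,q}_\pa\cap\H^{p,q}_{\bar{\mu}}$, so $\De_\mu\a^k_h = \De_{\bar{\mu}}\a^k_h=0$ as well. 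Self-adjointness of the Laplacians then kills the cross terms $\langle\De_\bullet\a^k_h,\b^k\rangle$ and $\langle\De_\bullet\b^k,\a^k_h\rangle$, yielding
$$\langle\De_d\a^k,\a^k\rangle_{L^2} = \langle\De_d\b^k,\b^k\rangle_{L^2},\quad \langle(\De_\mu+\De_{\bar{\mu}})\a^k,\a^k\rangle_{L^2} = \langle(\De_\mu+\De_{\bar{\mu}})\b^k,\b^k\rangle_{L^2}.$$
Thus it suffices to establish (\ref{E8}) for $\gamma^k=\b^k \in (\bigoplus\H^{p,q}_d)^\perp$, and (\ref{E4}) drops out.

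\textbf{Expansion and rewriting of the cross terms.} For $\gamma \in (\bigoplus\H^{p,q}_d)^\perp$, Proposition \ref{P4}(3) gives $\langle\De_d\gamma,\gamma\rangle = 2\langle(\De_{\bar{\pa}}+\De_\mu)\gamma,\gamma\rangle + 2\langle Q\gamma,\gamma\rangle$, where $Q := [\bar{\mu},\pa^*]+[\mu,\bar{\pa}^*]+[\pa,\bar{\pa}^*]+[\bar{\pa},\pa^*]$. Using Proposition \ref{P3}(3) to substitute $[\pa,\bar{\pa}^*]=[\bar{\mu}^*,\bar{\pa}]+[\mu,\pa^*]$ and $[\bar{\pa},\pa^*]=[\mu^*,\pa]+[\bar{\mu},\bar{\pa}^*]$, every summand of $Q$ becomes a graded commutator of a Nijenhuis-type operator ($\mu,\bar{\mu}$ or their adjoints) with a Dolbeault-type operator ($\pa,\bar{\pa}$ or their adjoints). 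The crucial consequence is that after moving one factor across the inner product, each of the six terms has the shape $\langle\de_1\gamma,\de_2\gamma\rangle$ with $\de_1 \in \{\mu,\bar{\mu},\mu^*,\bar{\mu}^*\}$ and $\de_2 \in \{\pa,\bar{\pa},\pa^*,\bar{\pa}^*\}$.

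\textbf{Cauchy--Schwarz and closing the inequality.} For each such term, Young's inequality gives $|\langle\de_1\gamma,\de_2\gamma\rangle| \le \tfrac{1}{2\ep}\|\de_1\gamma\|^2 + \tfrac{\ep}{2}\|\de_2\gamma\|^2$ for any $\ep>0$. Summing over the twelve resulting scalar products, writing $A:=\langle(\De_\mu+\De_{\bar{\mu}})\gamma,\gamma\rangle$ and $B:=\langle(\De_\pa+\De_{\bar{\pa}})\gamma,\gamma\rangle$, and using the identity $B = 2\langle(\De_{\bar{\pa}}+\De_\mu)\gamma,\gamma\rangle - A$ (which follows from Proposition \ref{P4}(2)), one obtains a bound $|\langle Q\gamma,\gamma\rangle| \le \tfrac{C}{\ep}A + 2C\ep\langle(\De_{\bar{\pa}}+\De_\mu)\gamma,\gamma\rangle - C\ep A$ for a dimensional constant $C$. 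Feeding this back and invoking the hypothesis $\langle(\De_{\bar{\pa}}+\De_\mu)\gamma,\gamma\rangle \ge cA$, the inequality reads
$$\langle\De_d\gamma,\gamma\rangle \ \ge\ \bigl[(2-4C\ep)c\ -\ 2C/\ep\ +\ 2C\ep\bigr]\,A.$$
Fixing $\ep$ small but independent of $c$ (say $\ep=1/(4C)$) collapses the bracket to $c_2(c-c_1)$ with $c_1, c_2>0$ uniform, which is (\ref{E8}).

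\textbf{Main obstacle.} The delicate part is the bookkeeping of constants: one must verify that a single choice of $\ep$ controls all twelve Cauchy--Schwarz applications simultaneously, that the $-A$ absorbed from $B = 2\langle(\De_{\bar{\pa}}+\De_\mu)\gamma,\gamma\rangle - A$ does not spoil the sign of the prefactor, and that $c_1,c_2$ depend only on the dimension. What makes the argument work at all is Proposition \ref{P3}(3): without it, $[\pa,\bar{\pa}^*]$ and $[\bar{\pa},\pa^*]$ could not be traded for Nijenhuis-type commutators, and the cross terms could not be absorbed into $A$.
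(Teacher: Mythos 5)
Your proposal is correct and follows essentially the same route as the paper's proof: expand $\De_{d}$ via Proposition \ref{P4}(3), trade $[\pa,\bar{\pa}^{\ast}]$ and $[\bar{\pa},\pa^{\ast}]$ for Nijenhuis-type commutators using Proposition \ref{P3}(3), kill the harmonic part $\a^{k}_{h}$ via Theorem \ref{T3}, and absorb the twelve cross terms by Young's inequality into $\langle(\De_{\mu}+\De_{\bar{\mu}})\b^{k},\b^{k}\rangle$ and $\langle(\De_{\pa}+\De_{\bar{\pa}})\b^{k},\b^{k}\rangle$ before invoking the $\mathcal{M}(k,c)$ hypothesis. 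The only cosmetic difference is that the paper tracks the constants explicitly (taking $\varepsilon=\tfrac{1}{9}$ to get $c_{1}=20$, $c_{2}=\tfrac{4}{3}$) whereas you leave them as a dimensional constant $C$.
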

	\begin{proof}
		Following identities in Proposition \ref{P3} and \ref{P4}, the Laplacian $\De_{d}$ can be written as
		\begin{equation*}
		\begin{split}
		\De_{d}=&2\De_{\bar{\pa}}+2\De_{\mu}+2\left([\bar{\mu},\pa^{\ast}]+[\mu,\bar{\pa}^{\ast}]+[\bar{\mu}^{\ast},\bar{\pa}]+[\mu,\pa^{\ast}]+[\mu^{\ast},\pa]+[\bar{\mu},\bar{\pa}^{\ast}]\right).
		\end{split}
		\end{equation*}
		Since $\a^{k}_{h}\in\oplus_{p+q=k}\mathcal{H}^{p,q}_{d}(X)$, by Theorem \ref{T3}, we have
		$$\mu \a^{k}_{h}=\mu^{\ast} \a^{k}_{h}= \bar{\mu}\a^{k}_{h}=\bar{\mu}^{\ast} \a^{k}_{h}=0,$$
		and
		$$\pa \a^{k}_{h}=\pa^{\ast} \a^{k}_{h}= \bar{\pa}\a^{k}_{h}=\bar{\pa}^{\ast} \a^{k}_{h}=0.$$
		Noting that $\a^{k}=\a^{k}_{h}+\b^{k}$ with $\a^{k}_{h}\in\oplus_{p+q=k}\mathcal{H}^{p,q}_{d}(X)$ and $\b^{k}\in(\oplus_{p+q=k}\mathcal{H}^{p,q}_{d}(X))^{\perp}$, the interior product of $\De_{d}\a^{k}$ and $\a^{k}$ is 
		\begin{equation*}
		\begin{split}
		\langle\De_{d}\a^{k},\a^{k}\rangle_{L^{2}}=&2\langle(\De_{\bar{\pa}}+\De_{\mu})\b^{k},\b^{k}\rangle_{L^{2}}\\
		&+2\langle[\bar{\mu},\pa^{\ast}]\b^{k},\b^{k}\rangle_{L^{2}}+2\langle[\mu,\bar{\pa}^{\ast}]\b^{k},\b^{k}\rangle_{L^{2}}\\
		&+2\langle[\bar{\mu}^{\ast},\bar{\pa}]\b^{k},\b^{k}\rangle_{L^{2}}+2\langle[\mu,\pa^{\ast}]\b^{k},\b^{k}\rangle_{L^{2}}\\
		&+2\langle[\mu^{\ast},\pa]\b^{k},\b^{k}\rangle_{L^{2}}+2\langle[\bar{\mu},\bar{\pa}^{\ast}]\b^{k},\b^{k}\rangle_{L^{2}}.\\
		\end{split}
		\end{equation*}
		Here we use the fact 
		\begin{equation*}
		\begin{split}
		\langle[\bar{\mu},\pa^{\ast}]\a^{k},\a^{k}\rangle_{L^{2}}&=\langle\pa^{\ast}\a^{k},\bar{\mu}^{\ast}\a^{k}\rangle_{L^{2}}+\langle\bar{\mu}\a^{k},\pa\a^{k}\rangle_{L^{2}}\\
		&=\langle\pa^{\ast}\b^{k},\bar{\mu}^{\ast}\b^{k}\rangle_{L^{2}}+\langle\bar{\mu}\b^{k},\pa\b^{k}\rangle_{L^{2}}.\\
		\end{split}
		\end{equation*}
		The proof of other cases is in the same way. From the AM-GM Inequality, we conclude that
		\begin{equation*}
		\begin{split}
		2|\langle[\bar{\mu},\pa^{\ast}]\b^{k},\b^{k}\rangle_{L^{2}}|
		&=|\langle\pa^{\ast}\b^{k},\bar{\mu}^{\ast}\b^{k}\rangle_{L^{2}}+\langle\bar{\mu}\b^{k},\pa\b^{k}\rangle_{L^{2}}|\\
		&\leq2\|\pa^{\ast}\b^{k}\|\cdot\|\bar{\mu}^{\ast}\b^{k}\|+2\|\bar{\mu}\b^{k}\|\cdot\|\pa\b^{k}\|\\
		&\leq\varepsilon(\|\pa^{\ast}\b^{k}\|^{2}+\|\pa\b^{k}\|^{2})+\frac{1}{\varepsilon}(\|\bar{\mu}^{\ast}\b^{k}\|^{2}+\|\bar{\mu}\b^{k}\|^{2}),\\
		\end{split}
		\end{equation*}
		where $\varepsilon$ is a positive constant. In a similar way, we also have
		\begin{equation*}
		\begin{split}
		&2|\langle[\mu,\bar{\pa}^{\ast}]\b^{k},\b^{k}\rangle_{L^{2}}|\leq\varepsilon(\|\bar{\pa}^{\ast}\b^{k}\|^{2}+\|\bar{\pa}\b^{k}\|^{2})+\frac{1}{\varepsilon}(\|\mu^{\ast}\b^{k}\|^{2}+\|\mu\b^{k}\|^{2}),\\
		&2|\langle[\bar{\mu}^{\ast},\bar{\pa}]\b^{k},\b^{k}\rangle_{L^{2}}|\leq\varepsilon(\|\bar{\pa}^{\ast}\b^{k}\|^{2}+\|\bar{\pa}\b^{k}\|^{2})+\frac{1}{\varepsilon}(\|\bar{\mu}^{\ast}\b^{k}\|^{2}+\|\bar{\mu}\b^{k}\|^{2}),\\
		&2|\langle[\mu,\pa^{\ast}]\b^{k},\b^{k}\rangle_{L^{2}}|\leq\varepsilon(\|\pa^{\ast}\b^{k}\|^{2}+\|\pa\b^{k}\|^{2})+\frac{1}{\varepsilon}(\|\mu^{\ast}\b^{k}\|^{2}+\|\mu\b^{k}\|^{2})\\
		&2|\langle[\mu^{\ast},\pa]\b^{k},\b^{k}\rangle_{L^{2}}|\leq\varepsilon(\|\pa^{\ast}\b^{k}\|^{2}+\|\pa\b^{k}\|^{2})+\frac{1}{\varepsilon}(\|\mu^{\ast}\b^{k}\|^{2}+\|\mu\b^{k}\|^{2})\\
		&2|\langle[\bar{\mu},\bar{\pa}^{\ast}]\b^{k},\b^{k}\rangle_{L^{2}}|\leq\varepsilon(\|\bar{\pa}^{\ast}\b^{k}\|^{2}+\|\bar{\pa}\b^{k}\|^{2})+\frac{1}{\varepsilon}(\|\bar{\mu}^{\ast}\b^{k}\|^{2}+\|\bar{\mu}\b^{k}\|^{2}).\\
		\end{split}
		\end{equation*}
		Combining the above inequalities, we get
		\begin{equation*}
		\begin{split}
		\langle\De_{d}\a^{k},\a^{k}\rangle_{L^{2}}\geq&2\langle(\De_{\bar{\pa}}+\De_{\mu})\b^{k},\b^{k}\rangle_{L^{2}}\\
		&-2|\langle[\bar{\mu},\pa^{\ast}]\b^{k},\b^{k}\rangle_{L^{2}}|-2|\langle[\mu,\bar{\pa}^{\ast}]\b^{k},\b^{k}\rangle_{L^{2}}|\\
		&-2|\langle[\bar{\mu}^{\ast},\bar{\pa}]\b^{k},\b^{k}\rangle_{L^{2}}|-2|\langle[\mu,\pa^{\ast}]\b^{k},\b^{k}\rangle_{L^{2}}|\\
		&-2|\langle[\mu^{\ast},\pa]\b^{k},\b^{k}\rangle_{L^{2}}|-2|\langle[\bar{\mu},\bar{\pa}^{\ast}]\b^{k},\b^{k}\rangle_{L^{2}}|\\
		\geq&\langle(\De_{\bar{\pa}}+\De_{\pa})\b^{k},\b^{k}\rangle_{L^{2}}+\langle(\De_{\mu}+\De_{\bar{\mu}})\b^{k},\b^{k}\rangle_{L^{2}}\\
		&-\frac{3}{\varepsilon}\langle(\De_{\mu}+\De_{\bar{\mu}})\b^{k},\b^{k}\rangle_{L^{2}}-3\varepsilon\langle(\De_{\bar{\pa}}+\De_{\pa})\b^{k},\b^{k}\rangle_{L^{2}}\\
		=&(1-3\varepsilon)\langle(\De_{\bar{\pa}}+\De_{\pa}+\De_{\mu}+\De_{\bar{\mu}})\b^{k},\b^{k}\rangle_{L^{2}}+(3\varepsilon-\frac{3}{\varepsilon})\langle(\De_{\mu}+\De_{\bar{\mu}})\b^{k},\b^{k}\rangle_{L^{2}}\\
		\geq&2c(1-3\varepsilon)\langle(\De_{\mu}+\De_{\bar{\mu}})\b^{k},\b^{k}\rangle_{L^{2}}+(3\varepsilon-\frac{3}{\varepsilon})\langle(\De_{\mu}+\De_{\bar{\mu}})\b^{k},\b^{k}\rangle_{L^{2}}.
		\end{split}
		\end{equation*}
		Let $\varepsilon=\frac{1}{9}$, then 
		\begin{equation}\label{E10}
		\langle\De_{d}\a^{k},\a^{k}\rangle_{L^{2}}\geq\frac{4}{3}(c-20)\langle(\De_{\mu}+\De_{\bar{\mu}})\b^{k},\b^{k}\rangle_{L^{2}}.
		\end{equation}
We complete the proof by taking $c_1=20$ and  $c_2=\frac{4}{3}$.
\end{proof}
The Laplacian $\De_{\bar{\pa}+\mu}$ can be written as
\begin{equation*}
\De_{\bar{\pa}+\mu}=\De_{\bar{\pa}}+\De_{\mu}+[\bar{\pa},\mu^{\ast}]+[\mu,\bar{\pa}^{\ast}].
\end{equation*}
In a similar way of Proposition \ref{P9}, we prove
\begin{proposition}\label{P12}
Suppose that the closed almost K\"{a}hler manifold $X$ is in $\mathcal{M}(k,c)$. Then for any $\gamma^{k}\in(\oplus_{p+q=k}\mathcal{H}^{p,q}_{d}(X))^{\perp}$, we have
\begin{equation}\label{E14}
\langle\De_{\bar{\pa}+\mu}\gamma^{k},\gamma^{k}\rangle_{L^{2}}\geq c_{4}(c-c_{3})\langle(\De_{\mu}+\De_{\bar{\mu}})\gamma^{k},\gamma^{k}\rangle_{L^{2}},
\end{equation}
where $c_{3}, c_{4}$ are two uniform positive constants.
\end{proposition}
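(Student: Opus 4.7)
The plan is to mirror the argument of Proposition \ref{P9}, using the explicit expression
\[
\De_{\bar{\pa}+\mu}=\De_{\bar{\pa}}+\De_{\mu}+[\bar{\pa},\mu^{\ast}]+[\mu,\bar{\pa}^{\ast}]
\]
given just above the statement. Taking the $L^{2}$-pairing with $\gamma^{k}$, I would split the right-hand side into the positive principal part $\langle(\De_{\bar{\pa}}+\De_{\mu})\gamma^{k},\gamma^{k}\rangle$ and the two cross commutator terms, then control the latter by Cauchy--Schwarz. Using $[\bar{\pa},\mu^{\ast}]=\bar{\pa}\mu^{\ast}+\mu^{\ast}\bar{\pa}$ and passing to adjoints,
\[
\langle[\bar{\pa},\mu^{\ast}]\gamma^{k},\gamma^{k}\rangle=\langle\mu^{\ast}\gamma^{k},\bar{\pa}^{\ast}\gamma^{k}\rangle+\langle\bar{\pa}\gamma^{k},\mu\gamma^{k}\rangle,
\]
and analogously for $[\mu,\bar{\pa}^{\ast}]$, so by AM--GM with a parameter $\varepsilon>0$ both cross terms are bounded in absolute value by $\tfrac{\varepsilon}{2}\langle\De_{\bar{\pa}}\gamma^{k},\gamma^{k}\rangle+\tfrac{1}{2\varepsilon}\langle\De_{\mu}\gamma^{k},\gamma^{k}\rangle$.

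Next I would upgrade the one-sided bounds $\langle\De_{\bar{\pa}}\gamma^{k},\gamma^{k}\rangle\leq\langle(\De_{\bar{\pa}}+\De_{\pa})\gamma^{k},\gamma^{k}\rangle$ and $\langle\De_{\mu}\gamma^{k},\gamma^{k}\rangle\leq\langle(\De_{\mu}+\De_{\bar{\mu}})\gamma^{k},\gamma^{k}\rangle$ so as to symmetrize in a manner compatible with the almost K\"ahler identity $\De_{\bar{\pa}}+\De_{\mu}=\De_{\pa}+\De_{\bar{\mu}}$ from Proposition \ref{P4}(2). Writing $\langle(\De_{\bar{\pa}}+\De_{\mu})\gamma^{k},\gamma^{k}\rangle=\tfrac{1}{2}\langle(\De_{\bar{\pa}}+\De_{\pa})\gamma^{k},\gamma^{k}\rangle+\tfrac{1}{2}\langle(\De_{\mu}+\De_{\bar{\mu}})\gamma^{k},\gamma^{k}\rangle$, the combined estimate becomes
\[
\langle\De_{\bar{\pa}+\mu}\gamma^{k},\gamma^{k}\rangle\geq\Bigl(\tfrac{1}{2}-\varepsilon\Bigr)\langle(\De_{\bar{\pa}}+\De_{\pa})\gamma^{k},\gamma^{k}\rangle+\Bigl(\tfrac{1}{2}-\tfrac{1}{\varepsilon}\Bigr)\langle(\De_{\mu}+\De_{\bar{\mu}})\gamma^{k},\gamma^{k}\rangle.
\]
Now I invoke the hypothesis $X\in\mathcal{M}(k,c)$, which together with the identity above forces $\langle(\De_{\bar{\pa}}+\De_{\pa})\gamma^{k},\gamma^{k}\rangle\geq(2c-1)\langle(\De_{\mu}+\De_{\bar{\mu}})\gamma^{k},\gamma^{k}\rangle$, and fix $\varepsilon$ (e.g.\ $\varepsilon=\tfrac{1}{4}$) so that the coefficient of $\langle(\De_{\bar{\pa}}+\De_{\pa})\gamma^{k},\gamma^{k}\rangle$ is positive; collecting terms yields the desired lower bound $c_{4}(c-c_{3})\langle(\De_{\mu}+\De_{\bar{\mu}})\gamma^{k},\gamma^{k}\rangle$ with explicit universal constants.

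The proof is essentially routine once the right identity is applied, and the main technical point, rather than a true obstacle, is the choice of symmetrization. Because the cross commutators $[\bar{\pa},\mu^{\ast}]$ and $[\mu,\bar{\pa}^{\ast}]$ only see $\bar{\pa},\bar{\pa}^{\ast},\mu,\mu^{\ast}$ and not $\pa,\pa^{\ast},\bar{\mu},\bar{\mu}^{\ast}$, a naive AM--GM would leave unbalanced $\De_{\bar{\pa}}$- and $\De_{\mu}$-terms that cannot be directly absorbed by the $\mathcal{M}(k,c)$ hypothesis; it is precisely Proposition \ref{P4}(2) that provides the needed symmetric reformulation. Compared with Proposition \ref{P9}, here there are only two cross commutators instead of six, so the bookkeeping is actually lighter, and the resulting constants $c_{3},c_{4}$ are correspondingly smaller than in \eqref{E10}.
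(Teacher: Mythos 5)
Your proposal is correct and follows exactly the route the paper intends: the paper gives no separate proof of Proposition \ref{P12}, stating only that it is proved ``in a similar way'' to Proposition \ref{P9}, and your argument is precisely that analogue — expand $\De_{\bar{\pa}+\mu}$ into its principal part plus the two cross commutators, bound these by Cauchy--Schwarz/AM--GM, symmetrize via $\De_{\bar{\pa}}+\De_{\mu}=\De_{\pa}+\De_{\bar{\mu}}$, and invoke the $\mathcal{M}(k,c)$ hypothesis in the form $\langle(\De_{\bar{\pa}}+\De_{\pa})\gamma^{k},\gamma^{k}\rangle_{L^{2}}\geq(2c-1)\langle(\De_{\mu}+\De_{\bar{\mu}})\gamma^{k},\gamma^{k}\rangle_{L^{2}}$. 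Your choice $\varepsilon=\tfrac14$ yields explicit admissible constants (e.g.\ $c_{4}=\tfrac12$, $c_{3}=\tfrac{29}{4}$), and your observation that the bookkeeping is lighter than in \eqref{E10} because only two cross commutators appear is accurate.
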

Noting that $(\mathcal{H}^{k}_{\bar{\pa}+\mu}(X))^{\perp}\subset(\oplus_{p+q=k}\mathcal{H}^{p,q}_{d}(X))^{\perp}$. Hence when $X\in\mathcal{M}(k,c)$ and $c$ is large enough, then $\mathcal{M}(k,c)\subset\widetilde{\mathcal{M}}(k,\tilde{c})$.

\subsection{Hodge decomposition on harmonic forms}
Before proving the Theorem \ref{T1}, we first observe a useful lemma as follows.
\begin{lemma}\label{L4}
	For any closed almost K\"{a}hler manifold of dimension $2n$ and for all $k$, we have
	\begin{equation}
	\mathcal{H}^{k}_{d}(X)\cap\mathcal{H}^{k}_{\mu}(X)\cap\mathcal{H}^{k}_{\bar{\mu}}(X)=\bigoplus_{p+q=k}\mathcal{H}^{p,q}_{d}(X).
	\end{equation}
\end{lemma}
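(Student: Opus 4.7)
The plan is to establish the two inclusions separately. The forward inclusion $\bigoplus_{p+q=k}\mathcal{H}^{p,q}_{d}(X)\subseteq\mathcal{H}^{k}_{d}\cap\mathcal{H}^{k}_{\mu}\cap\mathcal{H}^{k}_{\bar{\mu}}$ falls out directly from Theorem \ref{T3}: by that theorem, $\mathcal{H}^{p,q}_{d}=\mathcal{H}^{p,q}_{\bar{\pa}}\cap\mathcal{H}^{p,q}_{\mu}=\mathcal{H}^{p,q}_{\pa}\cap\mathcal{H}^{p,q}_{\bar{\mu}}$, so every component of any such $\alpha$ is annihilated by all of $\pa,\bar{\pa},\mu,\bar{\mu}$ and their adjoints, and consequently $\alpha$ is killed by $d$, $d^{\ast}$, $\De_{\mu}$, and $\De_{\bar{\mu}}$.

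For the reverse inclusion I would start from $\alpha\in\mathcal{H}^{k}_{d}\cap\mathcal{H}^{k}_{\mu}\cap\mathcal{H}^{k}_{\bar{\mu}}$ and decompose $\alpha=\sum_{p+q=k}\alpha^{p,q}$. Since $\mu,\mu^{\ast},\bar{\mu},\bar{\mu}^{\ast}$ shift bi-degree by the pairwise distinct amounts $(2,-1),(-2,1),(-1,2),(1,-2)$, the bi-components of the four identities $\mu\alpha=\mu^{\ast}\alpha=\bar{\mu}\alpha=\bar{\mu}^{\ast}\alpha=0$ are individually orthogonal and therefore force
$$\mu\alpha^{p,q}=\mu^{\ast}\alpha^{p,q}=\bar{\mu}\alpha^{p,q}=\bar{\mu}^{\ast}\alpha^{p,q}=0$$
for every $(p,q)$. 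After cancelling these vanishing pieces, the equations $d\alpha=0$ and $d^{\ast}\alpha=0$ reduce to $\pa\alpha+\bar{\pa}\alpha=0$ and $\pa^{\ast}\alpha+\bar{\pa}^{\ast}\alpha=0$, i.e.\ $\pa\alpha=-\bar{\pa}\alpha$ and $\pa^{\ast}\alpha=-\bar{\pa}^{\ast}\alpha$.

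The key computation is to pair $\alpha$ against the identity of Proposition \ref{P4}(3),
$$\De_{d}=2\bigl(\De_{\bar{\pa}}+\De_{\mu}+[\bar{\mu},\pa^{\ast}]+[\mu,\bar{\pa}^{\ast}]+[\pa,\bar{\pa}^{\ast}]+[\bar{\pa},\pa^{\ast}]\bigr),$$
and examine each term. Since $\mu\alpha=\mu^{\ast}\alpha=0$, the $\De_{\mu}$ contribution is zero. Each of the cross commutators $[\bar{\mu},\pa^{\ast}]$ and $[\mu,\bar{\pa}^{\ast}]$ unfolds into inner products of the form $\langle\pa^{\ast}\alpha,\bar{\mu}^{\ast}\alpha\rangle+\langle\bar{\mu}\alpha,\pa\alpha\rangle$ (and its $\mu$-analogue), which all vanish by the $\mathcal{H}^{k}_{\mu}\cap\mathcal{H}^{k}_{\bar{\mu}}$ hypotheses. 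The two remaining commutators $[\pa,\bar{\pa}^{\ast}]$ and $[\bar{\pa},\pa^{\ast}]$, after unfolding and then substituting $\pa\alpha=-\bar{\pa}\alpha$ and $\pa^{\ast}\alpha=-\bar{\pa}^{\ast}\alpha$, each contribute $-\|\bar{\pa}\alpha\|^{2}-\|\bar{\pa}^{\ast}\alpha\|^{2}$. The whole identity then collapses to
$$0=\langle\De_{d}\alpha,\alpha\rangle_{L^{2}}=-2\bigl(\|\bar{\pa}\alpha\|^{2}+\|\bar{\pa}^{\ast}\alpha\|^{2}\bigr),$$
forcing $\bar{\pa}\alpha=\bar{\pa}^{\ast}\alpha=0$. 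A final appeal to bi-degree orthogonality gives $\bar{\pa}\alpha^{p,q}=\bar{\pa}^{\ast}\alpha^{p,q}=0$ for every $(p,q)$, and Theorem \ref{T3} then places each $\alpha^{p,q}$ in $\mathcal{H}^{p,q}_{\bar{\pa}}\cap\mathcal{H}^{p,q}_{\mu}=\mathcal{H}^{p,q}_{d}$.

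I expect no serious obstacle. The essential insight is recognizing that the Nijenhuis-type hypothesis $\alpha\in\mathcal{H}^{k}_{\mu}\cap\mathcal{H}^{k}_{\bar{\mu}}$ combined with Proposition \ref{P4}(3) kills exactly the four cross terms in $\De_{d}$ that would otherwise obstruct a clean $(p,q)$-decomposition, reducing the entire question to one sign-definite identity; all other steps are routine bi-degree bookkeeping or adjoint manipulations.
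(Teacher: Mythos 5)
Your proof is correct, and the overall skeleton is the same as the paper's: pair $\a$ against the expansion of $\De_{d}$ coming from Proposition \ref{P4}(3), observe that the $\mu$-type cross terms die because $\mu\a=\mu^{\ast}\a=\bar{\mu}\a=\bar{\mu}^{\ast}\a=0$, and conclude $\bar{\pa}\a=\bar{\pa}^{\ast}\a=0$ so that Theorem \ref{T3} applies. The one genuine difference is how the two remaining commutators $[\pa,\bar{\pa}^{\ast}]$ and $[\bar{\pa},\pa^{\ast}]$ are disposed of. The paper invokes Proposition \ref{P3}(3), which rewrites each of them as a sum of commutators involving $\mu,\bar{\mu},\mu^{\ast},\bar{\mu}^{\ast}$; since all four of these operators annihilate $\a$, every cross term vanishes identically and one lands directly on $0=\langle(\De_{\pa}+\De_{\bar{\pa}})\a,\a\rangle_{L^{2}}$. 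You instead feed in the first-order consequences $\pa\a=-\bar{\pa}\a$ and $\pa^{\ast}\a=-\bar{\pa}^{\ast}\a$ of $d\a=d^{\ast}\a=0$ and evaluate the two terms explicitly, obtaining $0=\langle\De_{d}\a,\a\rangle_{L^{2}}=-2\left(\|\bar{\pa}\a\|^{2}+\|\bar{\pa}^{\ast}\a\|^{2}\right)$. Your signs check out, and the conclusion would survive a sign error anyway, since the left-hand side vanishes and the right-hand side is a nonzero multiple of $\|\bar{\pa}\a\|^{2}+\|\bar{\pa}^{\ast}\a\|^{2}$ either way. What the paper's route buys is that it requires no sign bookkeeping and exhibits the vanishing of the cross terms as a purely structural consequence of the almost K\"{a}hler identities (the same mechanism reused quantitatively in Proposition \ref{P9}); what your route buys is that it dispenses with Proposition \ref{P3}(3) entirely, using only Proposition \ref{P4}(3) together with elementary adjoint manipulations.
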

\begin{proof}
	Let $\a^{k}\in\mathcal{H}^{k}_{d}\cap\mathcal{H}^{k}_{\mu}\cap\mathcal{H}^{k}_{\bar{\mu}}$, then $\De_{d}\a^{k}=\De_{\mu}\a^{k}=\De_{\bar{\mu}}\a^{k}=0$. This implies that $\mu \a=\mu^{\ast} \a= \bar{\mu}\a=\bar{\mu}^{\ast} \a=0$. From the identities in Proposition \ref{P3}  and  Proposition \ref{P4}, the Laplacian $\De_{d}$ is of  the form 
	\begin{equation*}
	\begin{split}
	\De_{d}&=\De_{\bar{\pa}}+\De_{\pa}+\De_{\mu}+\De_{\bar{\mu}}+2\left([\bar{\mu},\pa^{\ast}]+[\mu,\bar{\pa}^{\ast}]+[\pa,\bar{\pa}^{\ast}]+[\bar{\pa},\pa^{\ast}]\right)\\
	&=\De_{\bar{\pa}}+\De_{\pa}+\De_{\mu}+\De_{\bar{\mu}}\\
	&+2\left([\bar{\mu},\pa^{\ast}]+[\mu,\bar{\pa}^{\ast}]+[\bar{\mu}^{\ast},\bar{\pa}]+[\mu,\pa^{\ast}]+[\mu^{\ast},\pa]+[\bar{\mu},\bar{\pa}^{\ast}]\right).
	\end{split}
	\end{equation*}
	Noting that
	$$\langle[\bar{\mu},\pa^{\ast}]\a^{k},\a^{k}\rangle_{L^{2}}=\langle\pa^{\ast}\a^{k},\bar{\mu}^{\ast}\a^{k}\rangle_{L^{2}}+\langle\bar{\mu}\a^{k},\pa\a^{k}\rangle_{L^{2}}=0$$
	and similarly
	$$\langle [\mu,\bar{\pa}^{\ast}]\a,\a\rangle=\langle [\bar{\mu}^{\ast},\bar{\pa}]\a,\a\rangle=\langle[\mu,\pa^{\ast}]\a,\a\rangle=\langle[\mu^{\ast},\pa]\a,\a\rangle=\langle[\bar{\mu},\bar{\pa}^{\ast}]\a,\a\rangle=0.$$
	Hence,
	\begin{equation*}
	0=\langle\De_{d}\a^{k},\a^{k}\rangle_{L^{2}}=\langle(\De_{\bar{\pa}}+\De_{\pa})\a^{k},\a^{k}\rangle_{L^{2}}.
	\end{equation*}
	Therefore, $\De_{\bar{\pa}}\a^{k}=\De_{\pa}\a^{k}=0$, i.e., $$\mathcal{H}^{k}_{d}\cap\mathcal{H}^{k}_{\mu}\cap\mathcal{H}^{k}_{\bar{\mu}}\subset\bigoplus_{p+q=k}\mathcal{H}^{p,q}_{d}.$$
	The other side of inclusion follows trivially from Theorem \ref{T3}.
\end{proof}
\begin{proof}[\textbf{Proof of Theorem \ref{T1}}]
We denote $\a$ is a $\De_{d}$-harmonic $k$-form. Following Proposition \ref{P1}, we have following decompositions
$$\a=\a_{h}+\b,$$
where $\a_{h}\in\oplus_{p+q=k}\mathcal{H}^{p,q}_{d}$ and $\b:=(\De_{\bar{\pa}}+\De_{\mu})\gamma\in(\oplus_{p+q=k}\mathcal{H}^{p,q}_{d})^{\perp}$.
Noting that $\a_{h}$ is also in $\mathcal{H}^{k}_{d}(X)$. Hence $\De_{d}\b=0$. Following Proposition \ref{P9}, we then have 
$$(\De_{\bar{\mu}}+\De_{\mu})\b=0.$$
Hence by Lemma \ref{L4}, $\b$ is also in $\oplus_{p+q=k}\mathcal{H}^{p,q}_{d}$. Therefore, $\b=0$. It implies that $\a=\a_{h}$ i.e., $\mathcal{H}^{k}_{d}\subset\oplus_{p+q=k}\mathcal{H}^{p,q}_{d}$. The other side of inclusion is trivial. Therefore, $\mathcal{H}^{k}_{d}=\oplus_{p+q=k}\mathcal{H}^{p,q}_{d}$.
\end{proof}
Let $0<k\leq n$, for any  $\a\in(\bigoplus_{p+q=k}\mathcal{H}_{d}^{p,q})^{\bot}$,  we have
$$\langle(\De_{\bar{\pa}}+\De_{\mu})\a,\a\rangle_{L^{2}}\geq\la_{k}\langle\a,\a\rangle_{L^{2}},$$
where $\la_{k}$ is the smallest positive eigenvalue of $(\De_{\bar{\pa}}+\De_{\mu})$ acting on $k$-forms. 

In odd degrees, by the Hodge diamond-type symmetries, we have
$$\sum_{p+q=2k+1}h^{p,q}=2\sum_{0\leq p\leq k}h^{p,2k+1-p}.$$
Furthermore, from Theorem \ref{T1}, we also have
\begin{corollary}
Let $(X,J,\w)$ be a closed $2n$-dimensional almost K\"{a}hler manifold. If the Betti number $b^{2k+1}$ is odd, $0\leq k<n$, then there is a uniform positive constant $c(n)$ such that 
$$\la_{2k+1}\leq c(n)\|Rm(g)\|_{C^{0}}.$$
\end{corollary}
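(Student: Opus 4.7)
The plan is to prove the statement by contrapositive, with Theorem \ref{T1} as the pivot. Assume toward contradiction that $\lambda_{2k+1} > c(n)\|Rm(g)\|_{C^{0}}$ for a sufficiently large constant $c(n)$ to be chosen below. I will show this forces $b^{2k+1}$ to be even: indeed, once Theorem \ref{T1} provides the Hodge decomposition $\mathcal{H}^{2k+1}_{d}=\bigoplus_{p+q=2k+1}\mathcal{H}^{p,q}_{d}$, the Hodge symmetry $h^{p,q}=h^{q,p}$ pairs up all summands (since $p\ne q$ when $p+q$ is odd), giving $b^{2k+1}=2\sum_{0\le p\le k}h^{p,2k+1-p}$.

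The pointwise ingredient is to bound the zero-order pieces of the Laplacian by the Nijenhuis tensor. By Lemma \ref{L7} the derivations $\mu$ and $\bar\mu$ are algebraic operators whose symbol is determined by $N$; since $\mu$ sends $\Omega^{p,q}$ to $\Omega^{p+2,q-1}$ while $\bar\mu$ sends it to $\Omega^{p-1,q+2}$, these images are $L^{2}$-orthogonal, so the identity $\mu+\bar\mu=-\tfrac{1}{4}(N\otimes Id_{\C})^{*}$ yields the pointwise bound $|\mu\alpha|^{2}+|\bar\mu\alpha|^{2}\le C|N|^{2}|\alpha|^{2}$, and the same holds for the adjoints $\mu^{*},\bar\mu^{*}$. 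Integrating and combining with Lemma \ref{L5} gives
\begin{equation*}
\langle(\Delta_{\mu}+\Delta_{\bar\mu})\alpha,\alpha\rangle_{L^{2}}=\|\mu\alpha\|^{2}+\|\mu^{*}\alpha\|^{2}+\|\bar\mu\alpha\|^{2}+\|\bar\mu^{*}\alpha\|^{2}\le C_{0}(n)\,\|Rm(g)\|_{C^{0}}\,\|\alpha\|_{L^{2}}^{2}
\end{equation*}
for every smooth form $\alpha$, with $C_{0}(n)$ depending only on dimension.

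Now I can chain the two estimates. Let $c$ be the universal constant from Theorem \ref{T1} (for instance $c=21$), and set $c(n):=c\,C_{0}(n)$. For any $\alpha\in(\bigoplus_{p+q=2k+1}\mathcal{H}^{p,q}_{d})^{\perp}$, the variational characterization of $\lambda_{2k+1}$ combined with the inequality above gives
\begin{equation*}
\langle(\Delta_{\bar\partial}+\Delta_{\mu})\alpha,\alpha\rangle_{L^{2}}\ge \lambda_{2k+1}\|\alpha\|_{L^{2}}^{2}> c\,C_{0}(n)\|Rm(g)\|_{C^{0}}\|\alpha\|_{L^{2}}^{2}\ge c\,\langle(\Delta_{\mu}+\Delta_{\bar\mu})\alpha,\alpha\rangle_{L^{2}},
\end{equation*}
so $X\in\mathcal{M}(2k+1,c)$. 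Theorem \ref{T1} then applies and yields the Hodge decomposition above, forcing $b^{2k+1}$ to be even and producing the desired contradiction.

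The main obstacle I anticipate is purely bookkeeping: one needs to make the pointwise estimates on $\mu,\bar\mu$ and their adjoints uniform in a dimension-dependent constant, and verify that the constant from Theorem \ref{T1} does not depend on the particular manifold $X$. Lemma \ref{L7} identifies $\mu+\bar\mu$ with a tensorial operator built from $N$ so that the bidegree separation cleanly recovers $\mu$ and $\bar\mu$ individually, and the remark after Theorem \ref{T1} confirms $c>20$ is universal; no analytic difficulty beyond this should arise.
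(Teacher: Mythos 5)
Your proposal is correct and follows essentially the same route as the paper: bound $\langle(\De_{\mu}+\De_{\bar{\mu}})\a,\a\rangle_{L^{2}}$ by $C(n)\|Rm(g)\|_{C^{0}}\|\a\|^{2}$ via Lemmas \ref{L7} and \ref{L5}, use the eigenvalue lower bound on $(\oplus_{p+q=2k+1}\mathcal{H}^{p,q}_{d})^{\perp}$ to place $X$ in $\mathcal{M}(2k+1,c)$, and invoke Theorem \ref{T1} together with the symmetry $h^{p,q}=h^{q,p}$ to force $b^{2k+1}$ even, contradicting the hypothesis. Your spelled-out pointwise estimate separating $\mu$ and $\bar{\mu}$ by bidegree is just a more explicit version of the step the paper leaves implicit.
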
 
\begin{proof}
By Lemma \ref{L5} and Lemma \ref{L7}, there exists a constant $C(n)$ such that 
$$  \langle(\De_{\mu}+\De_{\bar{\mu}})\a,\a\rangle_{L^{2}}\leq C(n)\|Rm(g)\|_{C^{0}}\|\a\|^{2}.$$
If the conclusion do not holds for a large constant $c(n)=cC(n)\|Rm(g)\|_{C^{0}}$, then we have
$$\la_{2k+1}\|\a\|^{2}\geq c(n)\|Rm(g)\|_{C^{0}}\|\a\|^{2}\geq c\langle(\De_{\mu}+\De_{\bar{\mu}})\a,\a\rangle_{L^{2}} ,\quad \forall\a\in(\oplus_{p+q=2k+1}\mathcal{H}^{p,q}_{d})^{\perp}.$$
where $c$ is the constant in Theorem \ref{T1}. Thus $X\in\mathcal{M}(k,c)$. From  Theorem \ref{T1}, one can see that the Betti number $b^{2k+1}=\sum_{p+q=2k+1}h^{p,q}$ is even. This contradicts our condition that $b^{2k+1}$ is odd.
\end{proof}
\subsection{Complex-$C^{\infty}$-pure and full almost complex structure}
For a complex $2$-form, we have a split as follows:
$$\Om^{2}(X)=\Om^{2,0}(X)\oplus\Om^{1,1}(X)\oplus\Om^{0,2}(X).$$
\begin{definition}(\cite[Definition 2.9]{DLZ})
	Let $H^{p,q}$ be the subspace of the complexified de Rham cohomology $H^{2}(X,\mathbb{C})$, consisting of classes which can be represented by a complex closed form of type $(p, q)$.
\end{definition}

\begin{definition}(\cite{AT,DLZ2})
	An almost-complex structure $J$ on a differential manifold $X$ is called\\
	(1) complex-$C^{\infty}$-pure if
	$$H^{2,0}(X)+H^{1,1}(X)+H^{0,2}(X)$$
	is direct;\\
	(2) complex-$C^{\infty}$-full if
	$$H^{2}_{dR}(X)=H^{2,0}(X)+H^{1,1}(X)+H^{0,2}(X)$$
	holds;\\
	(3) complex-$C^{\infty}$-pure and full if it is complex-$C^{\infty}$-pure and complex-$C^{\infty}$-full, i.e. 
	$$H^{2}_{dR}(X)=H^{2,0}(X)\oplus H^{1,1}(X)\oplus H^{0,2}(X)$$
	holds.
\end{definition}
\begin{proposition}\label{P5}
Let $(X,J,\w)$ be a closed $2n$-dimensional, ($n\geq2$), almost K\"{a}hler manifold. Then $J$ is complex-$C^{\infty}$-pure.
\end{proposition}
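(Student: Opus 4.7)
The plan is to adapt the standard K\"ahler argument: given a linear dependence among pure-type cohomology classes, pair it against a conjugate pure-type form wedged with a suitable power of $\w$, so that bidegree forces all cross terms to vanish and positivity recovers the surviving coefficient. Concretely, I would suppose $[\a^{2,0}]+[\a^{1,1}]+[\a^{0,2}] = 0$ in $H^{2}(X,\mathbb{C})$, where each $\a^{p,q}$ is a $d$-closed form of pure type $(p,q)$; equivalently, $\a^{2,0}+\a^{1,1}+\a^{0,2} = d\b$ for some complex $1$-form $\b$. The goal is to show $[\a^{p,q}]=0$ for each bidegree.

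First I would wedge this identity by $\overline{\a^{2,0}}\wedge\w^{n-2}$ and integrate over $X$. This test form is $d$-closed, since $d\w = 0$ by the almost K\"ahler assumption and $d\overline{\a^{2,0}} = \overline{d\a^{2,0}} = 0$. On the left, bidegree kills the two cross terms: $\a^{1,1}\wedge\overline{\a^{2,0}}\wedge\w^{n-2}$ sits in bidegree $(n-1,n+1)$ and $\a^{0,2}\wedge\overline{\a^{2,0}}\wedge\w^{n-2}$ in bidegree $(n-2,n+2)$, both of which vanish identically on a $2n$-real-dimensional almost complex manifold. Hence only $\int_X \a^{2,0}\wedge\overline{\a^{2,0}}\wedge\w^{n-2}$ survives on the left, while on the right Stokes gives $\int_X d\b\wedge\overline{\a^{2,0}}\wedge\w^{n-2} = \int_X d(\b\wedge\overline{\a^{2,0}}\wedge\w^{n-2}) = 0$.

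Next, I invoke the Weil formula for the Hodge star on primitive forms. Any $(2,0)$-form is automatically primitive since $\La\a^{2,0}$ would sit in bidegree $(1,-1)$, so $\ast\overline{\a^{2,0}} = c_n\,\w^{n-2}\wedge\overline{\a^{2,0}}$ for a positive constant $c_n$ depending only on $n$. Consequently $\int_X \a^{2,0}\wedge\overline{\a^{2,0}}\wedge\w^{n-2}$ is a positive multiple of $\|\a^{2,0}\|_{L^{2}}^{2}$, and combined with the vanishing on the right this forces $\a^{2,0}\equiv 0$ pointwise. Running the same test with $\overline{\a^{0,2}}\wedge\w^{n-2}$ in place of $\overline{\a^{2,0}}\wedge\w^{n-2}$ yields $\a^{0,2}\equiv 0$. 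The identity then degenerates to $\a^{1,1}=d\b$, so $[\a^{1,1}]=0$ in cohomology as well, and directness of the sum follows.

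I do not expect a serious obstacle: integrability of $J$ is never used, only $J$-compatibility and closedness of the positive real $(1,1)$-form $\w$, so the argument transfers from the K\"ahler case to the almost K\"ahler setting essentially verbatim. The only nontrivial ingredient is the pointwise algebraic identity $\ast\overline{\a^{2,0}} = c_n\,\w^{n-2}\wedge\overline{\a^{2,0}}$ on primitive $(2,0)$-forms with $c_n>0$, which depends only on the linear almost Hermitian data at each point and is a special case of the standard Weil identity. The hypothesis $n\geq 2$ is needed precisely so that $\w^{n-2}$ is a well-defined non-negative power of $\w$; in the borderline case $n=2$ the test form is simply $\overline{\a^{2,0}}$ and the Hodge star identity reduces to $\ast\overline{\a^{2,0}} = \overline{\a^{2,0}}$, so the same proof still goes through.
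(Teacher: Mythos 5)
Your proof is correct and uses essentially the same ingredients as the paper's: the Weil identity $\ast\overline{\a^{2,0}}=\frac{1}{(n-2)!}L^{n-2}\overline{\a^{2,0}}$ on primitive forms, bidegree vanishing of the cross terms, and Stokes against the closed test form $\overline{\a^{p,q}}\wedge\w^{n-2}$. The only (favorable) difference is that you test a full vanishing linear combination $[\a^{2,0}]+[\a^{1,1}]+[\a^{0,2}]=0$ rather than the pairwise intersections $H^{p,q}\cap H^{r,s}=\{[0]\}$ treated in the paper, which gives directness of the three-fold sum directly.
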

\begin{proof}
By the definition of complex-$C^{\infty}$-pure, we need to prove $H^{0,2}\cap H^{1,1}=\{[0]\}$, $H^{2,0}\cap H^{1,1}=\{[0]\}$ and $H^{0,2}\cap H^{2,0}=\{[0]\}$. 

Here we only provide a detailed proof of the case $H^{0,2}\cap H^{2,0}=\{[0]\}$. The method of proof in other cases is also similar.  Now let $[\a]\in H^{0,2}(X)\cap H^{2,0}(X)$. Then there exist $\a^{0,2}\in H^{0,2}$ and  $\a^{2,0}\in H^{2,0}$ such that
$$\a=\a^{0,2}+\b_{1}=\a^{2,0}+\b_{2},$$
where $\b_{1},\b_{2}$ are $d$(exact) $2$-form. Then
$$\a^{0,2}=\a^{2,0}+d\gamma,$$
where $\gamma$ is a $1$-form. Note that (see \cite[Proposition 1.2.31]{Huy})
$$\ast\a^{0,2}=\frac{1}{(n-2)!}L^{n-2}\a^{0,2}.$$
Therefore,
\begin{equation*}
\begin{split}
\|\a^{0,2}\|^{2}&=\int_{X}\a^{0,2}\wedge\ast\bar{\a}^{0,2}\\
&=\frac{1}{(n-2)!}\int_{X}\a^{0,2}\wedge\bar{\a}^{0,2}\wedge\w^{n-2}\\
&=\frac{1}{(n-2)!}\int_{X}(\a^{2,0}+d\gamma)\wedge\bar{\a}^{0,2}\wedge\w^{n-2}\\
&=\frac{1}{(n-2)!}\int_{X}\a^{2,0}\wedge\bar{\a}^{0,2}\wedge\w^{n-2}
+\frac{1}{(n-2)!}\int_{X}d(\gamma\wedge\bar{\a}^{0,2}\wedge\w^{n-2})\\
&=0.
\end{split}
\end{equation*}
Here we use the fact $d\bar{\a}^{0,2}=0$ and $d\w=0$. Hence $[\a]=0$, i.e., $H^{0,2}\cap H^{2,0}=\{[0]\}$.
\end{proof}
\begin{corollary}\label{C1}
	Let $(X,J,\w)$ be a closed $2n$-dimensional, ($n\geq2$), almost K\"{a}hler manifold. There exists an uniform positive constant $c$ such that if $X\in\mathcal{M}(2,c)$,
	then $J$ is complex $C^{\infty}$-pure and full. 	
\end{corollary}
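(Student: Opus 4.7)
The plan is to combine the Hodge decomposition of Theorem \ref{T1} with the purity result of Proposition \ref{P5}. Since complex-$C^{\infty}$-pure-and-full means both pure \emph{and} full, and Proposition \ref{P5} already gives purity under no extra hypothesis, the entire task reduces to establishing fullness, namely the inclusion
\[
H^{2}_{dR}(X,\C)\subseteq H^{2,0}(X)+H^{1,1}(X)+H^{0,2}(X).
\]

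First, I would choose the constant $c$ so that Theorem \ref{T1} applies at stage $k=2$, yielding
\[
\mathcal{H}^{2}_{d}(X)=\bigoplus_{p+q=2}\mathcal{H}^{p,q}_{d}(X).
\]
By the standard Hodge-theoretic isomorphism $H^{2}_{dR}(X,\C)\cong\mathcal{H}^{2}_{d}(X)$, every de Rham class $[\alpha]$ has a unique $\Delta_{d}$-harmonic representative, which by the displayed decomposition splits as $\alpha=\alpha^{2,0}+\alpha^{1,1}+\alpha^{0,2}$ with $\alpha^{p,q}\in\mathcal{H}^{p,q}_{d}(X)$.

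Next, I would observe that each pure piece $\alpha^{p,q}\in\mathcal{H}^{p,q}_{d}(X)$ is $d$-closed: by Theorem \ref{T3} any element of $\mathcal{H}^{p,q}_{d}$ is simultaneously killed by $\partial,\bar\partial,\mu,\bar\mu$ and their adjoints, so $d\alpha^{p,q}=(\mu+\partial+\bar\partial+\bar\mu)\alpha^{p,q}=0$. Hence $\alpha^{p,q}$ is a closed complex form of pure type $(p,q)$, which means $[\alpha^{p,q}]\in H^{p,q}(X)$ by the very definition of $H^{p,q}(X)$. Consequently
\[
[\alpha]=[\alpha^{2,0}]+[\alpha^{1,1}]+[\alpha^{0,2}]\in H^{2,0}(X)+H^{1,1}(X)+H^{0,2}(X),
\]
which is exactly fullness. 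Combined with Proposition \ref{P5}, this gives the direct-sum decomposition $H^{2}_{dR}(X,\C)=H^{2,0}(X)\oplus H^{1,1}(X)\oplus H^{0,2}(X)$, proving that $J$ is complex-$C^{\infty}$-pure-and-full.

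There is really no genuine obstacle here: all the hard work is already packaged into Theorem \ref{T1} (the Hodge decomposition on harmonic forms) and Theorem \ref{T3} (identifying $\mathcal{H}^{p,q}_{d}$ as the simultaneous kernel of all four differentials and their adjoints). The only minor point to be careful about is invoking Proposition \ref{P5} to ensure purity -- i.e.\ that the sum is direct -- since Theorem \ref{T1} a priori only produces a decomposition at the level of harmonic representatives and not of cohomology classes; but this is immediate from Proposition \ref{P5}, which requires only $n\geq 2$.
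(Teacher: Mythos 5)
Your proof is correct and follows essentially the same route as the paper: purity is supplied by Proposition \ref{P5}, and fullness is extracted from the Hodge decomposition of Theorem \ref{T1}. The only cosmetic difference is that the paper deduces fullness by a dimension count, namely $\sum_{p}\dim H^{p,2-p}\leq\dim H^{2}_{dR}=\sum_{p}\dim\mathcal{H}^{p,2-p}_{d}\leq\sum_{p}\dim H^{p,2-p}$, whereas you split the harmonic representative of each de Rham class directly into its pure-type closed pieces; both arguments rest on exactly the same ingredients.
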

\begin{proof}
	It's easy to see 
	$$\mathcal{H}^{p,q}_{d}\subset H^{p,2-p}\subset H_{dR}^{2}, for\ p=0,1,2.$$
	By Proposition \ref{P5}, we have 
	$$\bigoplus_{p+q=2}H^{p,q}\subset H_{dR}^{2}.$$
From Theorem \ref{T1}, we also have
	$$\sum_{p=0,1,2}\dim H^{p,2-p}\leq\dim H^{k}_{d}=\dim\mathcal{H}^{k}_{d}=\sum_{p=0,1,2}\dim\mathcal{H}_{d}^{p,2-p}\leq\sum_{p=0,1,2}\dim H^{p,2-p}$$
	Therefore, we get $$H^{2}_{dR}=H^{2,0}\oplus H^{1,1}\oplus H^{0,2},$$
	i.e., $J$ is complex $C^{\infty}$-pure and full.
\end{proof}
We also use the following notations:\\
(1) by saying that $J$ is complex-$C^{\infty}$-pure in $k$-stage we mean that the sum
$$H^{0,k}(X)+\cdots H^{p,k-p}(X)+\cdots+H^{k,0}(X)$$
is direct;\\
(2) by saying that $J$ is complex-$C^{\infty}$-full in $k$-stage we mean that  equality
$$H^{k}_{dR}(X)=H^{0,k}(X)+\cdots H^{p,k-p}(X)+\cdots+H^{k,0}(X)$$
holds;\\
(3) by saying that $J$ is complex-$C^{\infty}$-pure-and-full in $k$-stage we mean that $J$ induces the decomposition
$$H^{k}_{dR}(X)=\bigoplus_{p+q=k} H^{p,q}(X)$$
holds.
\begin{remark}
We let $\a^{p,q}\in\mathcal{H}^{p,q}_{d}$, i.e., $d\a^{p,q}=0$ and $d^{\ast}\a^{p,q}=0$. It's easy to see $\mathcal{H}^{p,q}_{d}\subset H^{p,q}$. By the Hodge decomposition, a closed $(p,q)$-form $\a^{p,q}$ can be written as $\a^{p,q}=\a_{h}+d\b$, where $\a_{h}$ is a harmonic $p+q$-form. But $\a_{h}$ may not be a pure bi-degree $(p,q)$-form.	
\end{remark}
\begin{theorem}\label{T4}
Let $(X,J,\w)$ be a closed $2n$-dimensional, ($n\geq2$), almost K\"{a}hler manifold. If
$$\mathcal{H}^{k}_{d}(X)=\bigoplus_{p+q=k}\mathcal{H}^{p,q}_{d}(X),$$
then $J$ is complex-$C^{\infty}$-pure-and-full in $k$-stage.
\end{theorem}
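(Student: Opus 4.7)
The goal is to establish $H^{k}_{dR}(X)=\bigoplus_{p+q=k}H^{p,q}(X)$. I begin from two standard facts: Hodge theory for $\De_d$ identifies $\mathcal{H}^{k}_{d}(X)\cong H^{k}_{dR}(X)$, and each $h\in\mathcal{H}^{p,q}_{d}$ is a closed pure $(p,q)$-form, so the assignment $h\mapsto[h]$ defines a natural linear map $\iota_{p,q}:\mathcal{H}^{p,q}_{d}\to H^{p,q}$. This map is always injective, since a $d$-harmonic form that is also $d$-exact must vanish by the orthogonality $\mathcal{H}^{k}_{d}\perp\mathrm{Im}\,d$. The plan is to upgrade each $\iota_{p,q}$ into an isomorphism; once this is accomplished, the complex-$C^{\infty}$-pure-and-full decomposition at the $k$-th stage follows by a short diagram chase.

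The key input is a comparison of two Hodge projections. By the hypothesis combined with Theorem \ref{T3},
$$\ker(\De_{\bar{\pa}}+\De_{\mu})\cap\Om^{k}=\bigoplus_{p+q=k}\mathcal{H}^{p,q}_{d}=\mathcal{H}^{k}_{d}.$$
Since $\De_{d}$ and $\De_{\bar{\pa}}+\De_{\mu}$ are both self-adjoint elliptic operators on $\Om^{k}$ with the same kernel, the orthogonal projections onto $\mathcal{H}^{k}_{d}$ arising from their respective Hodge decompositions coincide as a single map $\pi:\Om^{k}\to\mathcal{H}^{k}_{d}$. Crucially, $\De_{\bar{\pa}}+\De_{\mu}$ preserves bi-degree, and therefore so does $\pi$. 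Hence for any $\alpha\in\Om^{p,q}$ one has $\pi(\alpha)\in\mathcal{H}^{k}_{d}\cap\Om^{p,q}=\mathcal{H}^{p,q}_{d}$. If in addition $\alpha$ is $d$-closed, the usual Hodge decomposition for $\De_{d}$ together with closedness gives $\alpha=\pi(\alpha)+d\beta$, whence $[\alpha]=[\pi(\alpha)]$ in $H^{k}_{dR}$. Therefore every class in $H^{p,q}$ admits a representative in $\mathcal{H}^{p,q}_{d}$, i.e., $\iota_{p,q}$ is surjective.

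With each $\iota_{p,q}$ an isomorphism, summing and composing with the hypothesis $\bigoplus_{p+q=k}\mathcal{H}^{p,q}_{d}=\mathcal{H}^{k}_{d}$ and the Hodge isomorphism $\mathcal{H}^{k}_{d}\cong H^{k}_{dR}$ shows that the natural sum map $\bigoplus_{p+q=k}H^{p,q}\to H^{k}_{dR}$, $([\alpha^{p,q}])\mapsto\sum_{p+q=k}[\alpha^{p,q}]$, is a composition of three isomorphisms, hence an isomorphism itself. This is the desired complex-$C^{\infty}$-pure-and-full decomposition.

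The main obstacle is that, for a generic closed $(p,q)$-form $\alpha$, the $\De_{d}$-harmonic part of $\alpha$ need not remain of pure type $(p,q)$, so $\De_{d}$-Hodge theory alone does not produce a harmonic $(p,q)$-representative of a class in $H^{p,q}$. The hypothesis circumvents this obstacle precisely by forcing the $\De_{d}$-orthogonal projection onto $\mathcal{H}^{k}_{d}$ to agree with the bi-degree-preserving orthogonal projection onto $\ker(\De_{\bar{\pa}}+\De_{\mu})$; securing and exploiting this coincidence is the technical heart of the argument.
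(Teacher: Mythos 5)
Your proposal is correct and follows essentially the same route as the paper: both arguments exploit that under the hypothesis the kernel of the bi-degree-preserving elliptic operator $\De_{\bar{\pa}}+\De_{\mu}$ coincides with $\mathcal{H}^{k}_{d}$, so the $\De_{d}$-harmonic projection of a closed $(p,q)$-form is again of pure type $(p,q)$, giving $H^{p,q}\cong\mathcal{H}^{p,q}_{d}$. Your phrasing via the coincidence of the two Hodge projections, and your packaging of purity and fullness into the single statement that the sum map $\bigoplus_{p+q=k}H^{p,q}\to H^{k}_{dR}$ is an isomorphism, are only cosmetic variants of the paper's explicit decomposition $\a^{p,q}=\a^{p,q}_{h}+d\eta+d^{\ast}\de$ followed by its orthogonality and dimension-count arguments.
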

\begin{proof}
For any $\gamma\in\mathcal{H}^{k}_{d}$, since $\mathcal{H}^{k}_{d}=\bigoplus_{p+q=k}\mathcal{H}_{d}^{p,q}$, we get $(\De_{\bar{\pa}}+\De_{\mu})\gamma=0$. By Proposition \ref{P1}, for any $(p,q)$-form $\a^{p,q}$, we have a decomposition
$$\a^{p,q}=\a^{p,q}_{h}+(\De_{\bar{\pa}}+\De_{\mu})\b^{p,q},$$
where $\a^{p,q}_{h}$ is a harmonic form. One can check that $(\De_{\bar{\pa}}+\De_{\mu})\b^{p,q}\in(\mathcal{H}^{k}_{d})^{\bot}$ since for any $\gamma\in\mathcal{H}^{k}_{d}$,  we have
\begin{equation*}
\langle (\De_{\bar{\pa}}+\De_{\mu})\b^{p,q},\gamma\rangle_{L^{2}}=\langle \b^{p,q},(\De_{\bar{\pa}}+\De_{\mu})\gamma\rangle_{L^{2}}=0.
\end{equation*}
Since the Hodge decomposition is unique, there exist $(k-1)$-form $\eta$ and $(k+1)$-form $\delta$ such that
$$(\De_{\bar{\pa}}+\De_{\mu})\b^{p,q}=d\eta+d^{\ast}\delta.$$
Now let $[\a]\in H^{p,q}(X)$, i.e., there exists a closed $(p,q)$-form $\a^{p,q}$ such that $\a=\a^{p,q}+d(exact)$. By the above Hodge decomposition of $(p,q)$-form, there is a $\De_{d}$-harmonic $(p,q)$-form $\a_{h}^{p,q}$ such that 
$$\a^{p,q}=\a^{p,q}_{h}+d(exact)\Longrightarrow\a=\a^{p,q}_{h}+d(exact).$$
Therefore, $$H^{p,q}\cong\mathcal{H}^{p,q}_{d}.$$ 

Now we can prove $J$ is complex-$C^{\infty}$-pure in $k$-stage. Let $[\a]\in H^{i,k-i}\cap H^{j,k-j}$, $i\neq j$. Then there exist  $(i,k-i)$-form $\a_{h}^{i,k-i}\in \mathcal{H}_{d}^{i,k-i}$ and  $\a_{h}^{j,k-j}\in\mathcal{H}_{d}^{j,k-j}$ such that
$$\a=\a_{h}^{i,k-i}+d(exact)=\a_{h}^{j,k-j}+d(exact),$$
Then
$$\a_{h}^{i,k-i}=\a_{h}^{j,k-j}+d(exact).$$
Hence
\begin{equation*}
\|\a_{h}^{i,k-i}\|^{2}=\langle\a_{h}^{i,k-i},\a_{h}^{j,k-j}+d(exact)\rangle_{L^{2}}=0.
\end{equation*}
Therefore $[\a]=0$, i.e., $H^{i,k-i}\cap H^{j,k-j}=\{[0]\}$. Next, by the similar way in Corollary \ref{C1}, we can show that $H^{k}_{dR}=\bigoplus_{p+q=k} H^{p,q}$, i.e., $J$ is complex-$C^{\infty}$-pure-and-full in $k$-stage.
\end{proof}
For any $4$-dimensional almost Hermitian manifold, Cirici and Wilson proved the following result:
\begin{lemma}(\cite[Lemma 5.6]{CW1})\label{L2}
For any $4$-dimensional almost Hermitian manifold with a non-integrable almost complex structure, $\mathcal{H}^{2,0}_{d}(X)=\{0\}$. In particular, for any compact non-integrable $4$-dimensional almost K\"{a}hler manifold we have $h^{2,0}=h^{0,2}=0$.	
\end{lemma}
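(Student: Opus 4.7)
My plan is to combine bi-degree reductions specific to $n=2$, a pointwise computation showing that $\bar{\mu}$ is injective on $\Omega^{2,0}$ at every point where $N\neq 0$, and Aronszajn's unique continuation theorem for the Hodge Laplacian. Let $\a\in\mathcal{H}^{2,0}_{d}(X)$ on a closed $4$-dimensional almost Hermitian manifold (so $n=2$). Acting on $\Omega^{2,0}$ several components vanish purely by bi-degree: $\pa\a\in\Omega^{3,0}=0$, $\mu\a\in\Omega^{4,-1}=0$, $\bar{\pa}^{\ast}\a\in\Omega^{2,-1}=0$, and $\bar{\mu}^{\ast}\a\in\Omega^{3,-2}=0$. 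Hence $d\a=\bar{\pa}\a+\bar{\mu}\a\in\Omega^{2,1}\oplus\Omega^{1,2}$ and $d^{\ast}\a=\pa^{\ast}\a+\mu^{\ast}\a\in\Omega^{1,0}\oplus\Omega^{0,1}$; since each side is a sum over distinct bi-degrees, $\Delta_{d}\a=0$ forces
\[
\bar{\pa}\a=\bar{\mu}\a=\pa^{\ast}\a=\mu^{\ast}\a=0.
\]
The decisive equation is $\bar{\mu}\a=0$, because $\bar{\mu}$ is a $C^{\infty}$-linear (zero-order) operator governed by the Nijenhuis tensor via Lemma \ref{L7}.

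Next I would prove that at any point $p$ with $N_{p}\neq 0$, the bundle map $\bar{\mu}_{p}\colon\Omega^{2,0}|_{p}\to\Omega^{1,2}|_{p}$ is injective. Pick a local $(1,0)$-coframe $\theta^{1},\theta^{2}$; since $\Omega^{0,2}$ is a line bundle in dimension $4$, one can write $\bar{\mu}\theta^{i}=A_{i}\,\bar{\theta}^{1}\wedge\bar{\theta}^{2}$ for some functions $A_{1},A_{2}$, and $\a=f\,\theta^{1}\wedge\theta^{2}$. Because $\bar{\mu}$ is a graded derivation of total degree $1$ that kills functions, the Leibniz rule yields
\[
\bar{\mu}(\theta^{1}\wedge\theta^{2})=A_{1}\,\theta^{2}\wedge\bar{\theta}^{1}\wedge\bar{\theta}^{2}-A_{2}\,\theta^{1}\wedge\bar{\theta}^{1}\wedge\bar{\theta}^{2},
\]
a combination of two linearly independent basis vectors of $\Omega^{1,2}|_{p}$. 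So $\bar{\mu}\a(p)=0$ forces $f(p)\,(A_{1}(p),A_{2}(p))=0$. Moreover $\bar{\mu}$ annihilates functions and $\Omega^{0,1}$ by bi-degree, so it is determined globally by its restriction to $\Omega^{1,0}$; hence $(A_{1},A_{2})(p)=0\iff\bar{\mu}_{p}=0\iff N_{p}=0$ by Lemma \ref{L7}. Consequently $\a$ must vanish on the open set $U:=\{p\in X:N_{p}\neq 0\}$.

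Since $J$ is non-integrable, $U\neq\emptyset$. The Hodge Laplacian $\Delta_{d}$ is a second-order elliptic operator with scalar principal symbol, so Aronszajn's unique continuation theorem applies to $\a\in\ker\Delta_{d}$ and forces $\a\equiv 0$ on every connected component of $X$ meeting $U$; arguing component-wise yields $\a\equiv 0$, and hence $\mathcal{H}^{2,0}_{d}(X)=\{0\}$. The ``in particular'' statement in the almost K\"{a}hler case then follows from Theorem \ref{T3}, which identifies $\mathcal{H}^{2,0}_{\bar{\pa}}\cap\mathcal{H}^{2,0}_{\mu}$ with $\mathcal{H}^{2,0}_{d}$; thus $h^{2,0}=0$, and the Hodge-diamond symmetry $h^{p,q}=h^{q,p}$ gives $h^{0,2}=0$ as well. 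I expect the main obstacle to be the pointwise injectivity of $\bar{\mu}|_{\Omega^{2,0}}$: the sign-sensitive derivation computation and the reduction to $\bar{\mu}|_{\Omega^{1,0}}$ controlling $N$ are short but easy to get wrong. The bi-degree reductions and the elliptic unique continuation are essentially standard once those ingredients are secured.
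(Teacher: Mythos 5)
The paper does not actually prove this lemma: it is imported verbatim from \cite{CW1} (Lemma 5.6) with no in-text argument, so there is no internal proof to compare against. Your blind proof is correct and, as far as one can tell, reconstructs essentially the argument of Cirici--Wilson: the bidegree reductions special to $n=2$ correctly reduce $\Delta_d\alpha=0$ to $\bar{\partial}\alpha=\bar{\mu}\alpha=\partial^{*}\alpha=\mu^{*}\alpha=0$; the Leibniz computation $\bar{\mu}(\theta^{1}\wedge\theta^{2})=A_{1}\,\theta^{2}\wedge\bar{\theta}^{1}\wedge\bar{\theta}^{2}-A_{2}\,\theta^{1}\wedge\bar{\theta}^{1}\wedge\bar{\theta}^{2}$ has the right signs and does show that $\bar{\mu}$ is injective on the line bundle $\Omega^{2,0}$ exactly where $N\neq0$ (via Lemma \ref{L7} and the fact that $\bar{\mu}$ kills functions and $\Omega^{0,1}$); and Aronszajn's unique continuation for the scalar-symbol operator $\Delta_d$ legitimately propagates the vanishing from the nonempty open set $\{N\neq0\}$ to the whole manifold. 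Two minor caveats. First, your closing ``component-wise'' remark is the one soft spot: a component not meeting $\{N\neq0\}$ carries an integrable structure and your argument says nothing there (the lemma is genuinely false for disconnected manifolds, e.g.\ a disjoint union with a K3 surface), so connectedness of $X$ must be assumed, as is standard. Second, you assume $X$ closed in order to identify $\ker\Delta_d$ with $\ker d\cap\ker d^{*}$, whereas the first clause is stated for arbitrary almost Hermitian $4$-manifolds; taking $\mathcal{H}^{2,0}_{d}=\ker d\cap\ker d^{*}\cap\Omega^{2,0}$ as the definition removes this dependence, since your bidegree splitting and the unique continuation step are purely local. Neither point affects the use of the lemma in this paper, where $X$ is closed and connected.
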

For any compact almost Kähler manifold of dimension $2n$, and any $p,q$ we have an orthogonal direct sum decomposition (\cite[Corollary 5.4]{CW1})
$$\mathcal{H}^{p,q}_{d}(X)=\bigoplus_{j\geq0}L^{j}(\mathcal{H}^{p-j,q-j}_{d}(X))_{prim}$$
where
$$(\mathcal{H}^{s,t}_{d}(X))_{prim}:=\mathcal{H}^{s,t}_{d}(X)\cap\ker\La.$$	
\begin{proof}[\textbf{Proof of Theorem \ref{T2}}]
	First, by Theorem \ref{T1}, we have
	$$\mathcal{H}^{2}_{d}=\mathbb{C}\w\oplus\mathcal{H}^{1,1}_{0}\oplus\mathcal{H}^{0,2}_{d}\oplus\mathcal{H}^{2,0}_{d},$$
	where 
	$$\mathcal{H}^{1,1}_{0}:=\mathcal{H}^{1,1}_{d}\cap\ker\La=\{\a\in\Om^{1,1}:\De_{d}\a=0, \La\a=0\}.$$
If $J$ is non-integrable, then following Lemma \ref{L2} (or see \cite[Lemma 2.12]{DLZ})  we have 
$$\mathcal{H}_{d}^{2,0}=\mathcal{H}_{d}^{0,2}=\{0\}.$$
Therefore, $$b^{+}_{2}=1+2h^{2,0}=1.$$ 
This contradicts our condition $b^{+}_{2}\geq2$. Hence, $J$ is integrable.
\end{proof}
\section{Hard Lefschetz Condition  on almost K\"{a}hler manifold}
\subsection{Hard Lefschetz Condition}
Recall that a symplectic manifold is said to satisfy the $dd^{\La}$-Lemma if every $d$-exact, $d^{\La}$-closed form is $dd^{\La}$-exact, namely, if $H^{\bullet}_{d+d^{\La}}\rightarrow H^{\bullet}_{dR}$ is injective. Furthermore, one says that the Hard Lefschetz Condition holds on $X$ if
(HLC) for 
$$L^{n-k}:H^{k}_{dR}(X)\rightarrow H^{2n-k}_{dR}(X),\ \forall\ 0\leq k<n$$
are isomorphisms. 

We denote $h^{k}_{d+d^{\La}}=\dim H^{k}_{d+d^{\La}}$ and $h^{k}_{dd^{\La}}=\dim H^{k}_{dd^{\La}}$. In \cite{AT15}, Angella and Tomassini introduced on a closed symplectic manifold $(X,\w)$ the following integers:
$$\De^{k}:=h^{k}_{d+d^{\La}}+h^{k}_{dd^{\La}}-2b^{k}\geq0,\ \forall k\in\mathbb{Z}$$
proving that, similarly to the complex case, their triviality characterizes the $dd^{\La}$-lemma. Using the equality $\dim H^{\bullet}_{d+d^{\La}}=\dim H^{\bullet}_{dd^{\La}}$ proved in \cite{TY1}, we can write the non-HLC degrees as follows
$$\De^{k}=2(h^{k}_{d+d^{\La}}-b^{k}),\ \forall k\in\mathbb{Z}.$$
The equalities
$$b^{k}=h^{k}_{d+d^{\La}}, \forall\ k=1,\cdots,n$$
hold on a closed symplectic $2n$-dimensional manifold if and only if it satisfies the HLC; namely the equality $b^{\bullet}=h^{\bullet}_{d+d^{\La}}$ ensures the bijectivity of the natural maps $H^{\bullet}_{d+d^{\La}}\rightarrow H^{\bullet}_{d}$, and hence the $dd^{\La}$-lemma.

We call the Hard Lefschetz Condition holds on $(\mathcal{H}^{\bullet}_{d},\w)$ if 
$$L^{n-k}:\mathcal{H}^{k}_{d}(X)\rightarrow\mathcal{H}^{2n-k}_{d}(X),\ \forall\ 0\leq k<n$$
are isomorphisms.  It is easy to see that the HLC on  $(\mathcal{H}^{\bullet}_{d},\w)$ implies the HLC on $(H^{\bullet}_{dR},\w)$ (see \cite{TW}).We will provide several equivalent conditions for HLC on $(\mathcal{H}^{\bullet}_{d},\w)$ holds on a closed almost K\"{a}hler manifold.  

Noting that $[\De_{d}+\De_{d^{\La}},L]=0$ (see \cite[Theorem 3.5]{Hua} or \cite[Theorem 5.2]{TW}). Then one can see that $\mathcal{H}^{k}_{d}=\mathcal{H}^{k}_{d^{\La}}$ implies $L^{n-k}:\mathcal{H}^{k}_{d}(X)\xrightarrow{\cong}\mathcal{H}^{2n-k}_{d}(X)$. For any $B\in\ker\La\cap\Om^{k}$, we have (see \cite[Equation (2.7)]{TY1})
$$\ast_{s}B=(-1)^{\frac{k(k+1)}{2}}\frac{1}{(n-k)!}L^{n-k}B,$$
where $\ast_{s}$ is the  symplectic star operator. Therefore, if $L^{n-k}B$ is in $\ker{\De_{d}}$, we then have
$$\De_{d}\ast_{s}B=0,$$
i.e., $d\ast_{s}B=0$ and $d\ast\ast_{s}B=0$. Therefore, $B$ is in $\ker\De_{d^{\La}}$ since $d^{\La}=(-1)^{k+1}\ast_{s}d\ast_{s}$ and $d^{\La\ast}=(-1)^{k}\ast_{s}d^{\ast}\ast_{s}$. Suppose that $L^{n-1}:\mathcal{H}^{1}_{d}(X)\xrightarrow{\cong}\mathcal{H}^{2n-1}_{d}(X)$ holds on $X$. We let $\a\in\mathcal{H}^{1}_{d}$, hence $\De_{d}(L^{n-1}\a)=0$. Therefore, $\a\in\mathcal{H}^{1}_{d^{\La}}$. It implies that  $\mathcal{H}^{1}_{d}\subset\mathcal{H}^{1}_{d^{\La}}$. Since $\dim\mathcal{H}^{1}_{d}=\dim\mathcal{H}^{1}_{d^{\La}}$, we have $\mathcal{H}^{1}_{d}=\mathcal{H}^{1}_{d^{\La}}$.
\begin{lemma}(cf. \cite[Lemma 3.5]{Hua21})\label{L6}
	For any closed almost K\"{a}hler manifold, the following identities hold:
	\begin{equation}
	\begin{split}
	&[d^{\ast},L^{l}]|_{\mathcal{H}^{\bullet}_{d}}=-lL^{l-1}d^{\La\ast}|_{\mathcal{H}^{\bullet}_{d}},\\
	&[d^{\La},L^{l}]|_{\mathcal{H}^{\bullet}_{d^{\La}}}=lL^{l-1}d|_{\mathcal{H}^{\bullet}_{d^{\La}}}.\\
	\end{split}
	\end{equation}
\end{lemma}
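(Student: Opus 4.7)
The plan is to prove both identities as operator identities on $\Om^{\bullet}(X)$, from which the stated restrictions to the harmonic subspaces are automatic. The whole argument reduces to two commutation facts with the Lefschetz operator, namely $[L,d]=0$ and $[L,d^{\La\ast}]=0$, together with a short graded Leibniz induction that handles both formulas in parallel.

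First I would read off $[L,d]=0$ from parts (1) and (2) of Proposition~\ref{P2} applied to each component of $d=\mu+\pa+\bar{\pa}+\bar{\mu}$. Next, starting from the formula $d^{\La\ast}=[L,d^{\ast}]$ recalled in the text and expanding $d^{\ast}$ into its four bi-graded components, parts (3) and (4) of Proposition~\ref{P2} yield the explicit expression $d^{\La\ast}=\sqrt{-1}\,(\mu+\bar{\pa}-\pa-\bar{\mu})$; parts (1)--(2) then immediately give $[L,d^{\La\ast}]=0$. A parallel computation, using $d^{\La}=[d,\La]$ together with the $\La$-brackets of Proposition~\ref{P2}, produces $d^{\La}=\sqrt{-1}\,(\pa^{\ast}-\bar{\pa}^{\ast}+\bar{\mu}^{\ast}-\mu^{\ast})$ and, on bracketing once more with $L$, the identity $[d^{\La},L]=d$.

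With these in hand the identities reduce to a one-line induction. Because $d^{\ast}$ and $d^{\La}$ are odd and $L$ is even, the graded Leibniz rule collapses to the ordinary form
\begin{equation*}
[A,L^{l}]=\sum_{i=0}^{l-1}L^{i}\,[A,L]\,L^{l-1-i}\qquad\text{for }A\in\{d^{\ast},d^{\La}\}.
\end{equation*}
For $A=d^{\ast}$, substituting $[d^{\ast},L]=-d^{\La\ast}$ and pushing $L$ past $d^{\La\ast}$ via $[L,d^{\La\ast}]=0$ collapses every summand to $-L^{l-1}d^{\La\ast}$, giving the first identity. For $A=d^{\La}$, substituting $[d^{\La},L]=d$ and pushing $L$ past $d$ via $[L,d]=0$ collapses every summand to $L^{l-1}d$, giving the second.

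I do not anticipate a real obstacle; the only delicate point is the sign bookkeeping in Proposition~\ref{P2} when writing $d^{\La\ast}$ and $d^{\La}$ as linear combinations of the bi-graded components of $d^{\ast}$ and $d$, so that the cancellations $[L,d^{\La\ast}]=0$ and $[d^{\La},L]=d$ come out correctly. The restriction of the identities to $\mathcal{H}^{\bullet}_{d}$ and $\mathcal{H}^{\bullet}_{d^{\La}}$ in the statement is cosmetic: both equalities hold on all of $\Om^{\bullet}(X)$, and the restricted form is simply the form in which they will be invoked later.
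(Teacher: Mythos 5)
Your proof is correct, and it rests on the same two base-case brackets the paper uses, $[d^{\ast},L]=-d^{\La\ast}$ and $[d^{\La},L]=d$, together with $[L,d]=[L,d^{\La\ast}]=0$; your derivation of these from Proposition~\ref{P2} (including the explicit expressions $d^{\La\ast}=\sqrt{-1}(\mu+\bar{\pa}-\pa-\bar{\mu})$ and $d^{\La}=\sqrt{-1}(\pa^{\ast}-\bar{\pa}^{\ast}+\bar{\mu}^{\ast}-\mu^{\ast})$) checks out. The one genuine difference is in the packaging: the paper runs an induction on $l$ that is carried out only on the harmonic subspaces, and it actually needs the harmonicity at one point --- the step rewriting $Ld^{\ast}L^{l-1}\a$ as $L[d^{\ast},L^{l-1}]\a$ (resp.\ $Ld^{\La}L^{l-1}\b$ as $L[d^{\La},L^{l-1}]\b$) silently uses $d^{\ast}\a=0$ (resp.\ $d^{\La}\b=0$), since the inductive hypothesis is only available in restricted form. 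Your version avoids this by expanding $[A,L^{l}]=\sum_{i=0}^{l-1}L^{i}[A,L]L^{l-1-i}$ once and for all (legitimate here because $L$ is even, so the graded bracket with $L$ is the ordinary commutator) and then commuting $L$ past $d^{\La\ast}$ or $d$; this yields the identities as unrestricted operator identities on $\Om^{\bullet}(X)$, of which the lemma is the restriction. That is a mild but real strengthening, and it makes the logic cleaner at no cost; the paper's restricted form is all that is invoked later (e.g.\ in Proposition~\ref{P8}), so both versions serve equally well downstream.
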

\begin{proof}
	The case of $l=1$: it's easy to see $[d^{\ast},L]=-d^{\La\ast}$ and $[d^{\La},L]=d$.
	We suppose that the case of $k=l-1$ it true, i.e.,
	\begin{equation*}
	[d^{\ast},L^{l-1}]|_{\mathcal{H}^{\bullet}_{d}}=-(L-1)L^{l-2}d^{\La\ast}|_{\mathcal{H}^{\bullet}_{d}},\ and\ [d^{\La},L^{l-1}]|_{\mathcal{H}^{\bullet}_{d^{\La}}}=(l-1)L^{l-2}d|_{\mathcal{H}^{\bullet}_{d^{\La}}}.
	\end{equation*}
	Thus if $k=l$: let $\a\in\mathcal{H}^{l}_{d}$, i.e., $d\a=d^{\ast}\a=0$, we then have
	\begin{equation*}
	\begin{split}
	[d^{\ast},L^{l}]\a&=[d^{\ast},L]L^{l-1}\a+Ld^{\ast}L^{l-1}\a\\
	&=[d^{\ast},L]L^{l-1}\a+L[d^{\ast},L^{l-1}]\a\\
	&=-d^{\La\ast}L^{l-1}\a-L\big{(}(l-1)L^{l-2}d^{\La\ast}\a\big{)}\\
	&=-lL^{l-1}d^{\La\ast}\a.\\
	\end{split}
	\end{equation*}
	Here we use the identity $[d^{\La\ast},L]=0$. Let $\b\in\mathcal{H}^{l}_{d^{\La}}$, i.e., $d^{\La}\b=d^{\La\ast}\b=0$, we then have
	\begin{equation*}
	\begin{split}
	[d^{\La},L^{l}]\b&=[d^{\La},L]L^{l-1}\b+Ld^{\La}L^{l-1}\b\\
	&=[d^{\La},L]L^{l-1}\b+L[d^{\La},L^{l-1}]\b\\
	&=dL^{l-1}\b+L\big{(}(l-1)L^{l-2}d\b\big{)}\\
	&=lL^{l-1}d\b.\\
	\end{split}
	\end{equation*}
	Here, we use the identity $[d, L]=0$.
\end{proof}
\begin{proposition}(cf. \cite[Theorem 5.3]{TW})\label{P8}
	Let $(X,J,\w)$ be a closed almost $2n$-dimensional K\"{a}hler manifold. For any $0\leq k<n$, then the followings statements are equivalent:\\
	(1) $\mathcal{H}^{k}_{d}(X)=\mathcal{H}^{k}_{d^{\La}}(X)$\\
	(2) $L^{n-k}:\mathcal{H}^{k}_{d}(X)\xrightarrow{\cong}\mathcal{H}^{2n-k}_{d}(X)$, \\
	(3) $L^{n-k}:\mathcal{H}^{k}_{d^{\La}}(X)\xrightarrow{\cong}\mathcal{H}^{2n-k}_{d^{\La}}(X)$,\\
	(4) $\mathcal{H}^{k}_{d}(X)\subset\mathcal{H}^{k}_{dd^{\La}}(X)$.
\end{proposition}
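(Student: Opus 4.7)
My plan is to prove the four conditions via the cycle $(1) \Rightarrow (2) \Leftrightarrow (4) \Rightarrow (1)$, with $(1) \Leftrightarrow (3)$ following by the symmetry of the framework under the symplectic star. Three ingredients drive the argument: the commutation $[\Delta_{d}+\Delta_{d^{\La}}, L]=0$ together with its adjoint, which endows the finite-dimensional space $\mathcal{H}_{d}^{\bullet}\cap\mathcal{H}_{d^{\La}}^{\bullet}=\ker(\Delta_{d}+\Delta_{d^{\La}})$ with an $\mathfrak{sl}_{2}$-representation generated by $L$ and $\La$; the symplectic star $\ast_{s}$, an involution intertwining $\Delta_{d}$ with $\Delta_{d^{\La}}$, which induces bijections $\ast_{s}:\mathcal{H}_{d}^{k}\xrightarrow{\cong}\mathcal{H}_{d^{\La}}^{2n-k}$; and the commutator identities of Lemma \ref{L6}.

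For $(1) \Rightarrow (2)$, applying $\ast_{s}$ to (1) gives $\mathcal{H}_{d}^{2n-k}=\mathcal{H}_{d^{\La}}^{2n-k}$, so both $\mathcal{H}_{d}^{k}$ and $\mathcal{H}_{d}^{2n-k}$ coincide with the respective weight subspaces of $\ker(\Delta_{d}+\Delta_{d^{\La}})$; classical $\mathfrak{sl}_{2}$-representation theory then delivers the Lefschetz isomorphism $L^{n-k}$. The equivalence $(2) \Leftrightarrow (4)$ uses only Lemma \ref{L6}: for $\alpha\in\mathcal{H}_{d}^{k}$, condition (2) forces $d^{\ast}(L^{n-k}\alpha)=0$, and since $d^{\ast}\alpha=0$, the first identity of Lemma \ref{L6} rewrites this as $-(n-k)L^{n-k-1}d^{\La\ast}\alpha=0$; injectivity of $L^{n-k-1}$ on $\Omega^{k-1}$ (valid because $k-1<n$) forces $d^{\La\ast}\alpha=0$, which together with the automatic $dd^{\La}\alpha=-d^{\La}d\alpha=0$ places $\alpha$ in $\mathcal{H}_{dd^{\La}}^{k}$, giving (4). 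The converse $(4)\Rightarrow(2)$ runs the same computation in reverse to place $L^{n-k}\alpha$ in $\mathcal{H}_{d}^{2n-k}$, with surjectivity supplied by $\dim\mathcal{H}_{d}^{k}=\dim\mathcal{H}_{d}^{2n-k}$ via the ordinary Hodge star.

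The remaining direction $(4) \Rightarrow (1)$ is the main obstacle. From (4), $\alpha\in\mathcal{H}_{d}^{k}$ satisfies $d\alpha=d^{\ast}\alpha=d^{\La\ast}\alpha=0$, and only $d^{\La}\alpha=0$ remains. My plan is to route the problem through the symplectic star: $\ast_{s}\alpha\in\mathcal{H}_{d^{\La}}^{2n-k}$ is automatic from $\alpha\in\mathcal{H}_{d}^{k}$, and the identity $d^{\La\ast}=\pm\ast_{s}d^{\ast}\ast_{s}$ turns $d^{\La\ast}\alpha=0$ into $d^{\ast}\ast_{s}\alpha=0$. Now the $\ast_{s}$-conjugate of (2), invoking the relation $\ast_{s}L\ast_{s}=\pm\La$, produces a Lefschetz-type isomorphism on the $d^{\La}$-harmonic side; applying the second identity of Lemma \ref{L6}, $[d^{\La},L^{l}]|_{\mathcal{H}^{\bullet}_{d^{\La}}}=lL^{l-1}d|_{\mathcal{H}^{\bullet}_{d^{\La}}}$, to the preimage of $\ast_{s}\alpha$ under this iso, together with injectivity of Lefschetz on $\Omega^{k+1}$, forces $d\ast_{s}\alpha=0$, which via $d^{\La}=\pm\ast_{s}d\ast_{s}$ is exactly $d^{\La}\alpha=0$. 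Finally, $(1)\Leftrightarrow(3)$ follows by the same chain after interchanging $d$ and $d^{\La}$ throughout via $\ast_{s}$. The main difficulty is precisely this symplectic-star juggling: on a non-integrable background $L$ does not preserve $\mathcal{H}_{d}^{\bullet}$ individually, so $d^{\La\ast}\alpha=0$ does not by itself give $d^{\La}\alpha=0$, and one must route to the $d^{\La}$-harmonic world and close the loop with the second half of Lemma \ref{L6}.
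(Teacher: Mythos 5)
Most of your chain is sound and close to the paper's: $(1)\Rightarrow(2),(3),(4)$ via $[\De_{d}+\De_{d^{\La}},L]=0$ is exactly the paper's argument, and your $(2)\Leftrightarrow(4)$ via the first identity of Lemma \ref{L6} is correct (modulo a small slip: $d^{\La\ast}$ \emph{raises} degree, so the injectivity you need is that of $L^{n-k-1}$ on $\Om^{k+1}$, not on $\Om^{k-1}$). The genuine gap is in $(4)\Rightarrow(1)$, i.e.\ in extracting the last missing equation $d^{\La}\a=0$. You propose to take the preimage of $\ast_{s}\a$ under a ``Lefschetz-type isomorphism on the $d^{\La}$-harmonic side'' obtained as the $\ast_{s}$-conjugate of (2). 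But conjugating $L^{n-k}:\mathcal{H}^{k}_{d}\xrightarrow{\cong}\mathcal{H}^{2n-k}_{d}$ by $\ast_{s}$, using $\ast_{s}L\ast_{s}=\pm\La$, yields an isomorphism $\La^{n-k}:\mathcal{H}^{2n-k}_{d^{\La}}\to\mathcal{H}^{k}_{d^{\La}}$ — a map going the \emph{wrong} way. It does not yield $L^{n-k}:\mathcal{H}^{k}_{d^{\La}}\xrightarrow{\cong}\mathcal{H}^{2n-k}_{d^{\La}}$, which is precisely statement (3) and is not yet available at this point of your cycle: since $L$ commutes only with the sum $\De_{d}+\De_{d^{\La}}$ and not with $\De_{d^{\La}}$ alone, and since $L^{n-k}\La^{n-k}$ is not a scalar multiple of the identity (it acts with different constants on the Lefschetz components), surjectivity of $\La^{n-k}$ onto $\mathcal{H}^{k}_{d^{\La}}$ does not give surjectivity of $L^{n-k}$ onto $\mathcal{H}^{2n-k}_{d^{\La}}$. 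Consequently the element $\b\in\mathcal{H}^{k}_{d^{\La}}$ with $L^{n-k}\b=\ast_{s}\a$, to which you want to apply the second identity of Lemma \ref{L6}, need not exist, and the step collapses.

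The paper closes this step by a different device that your proposal never invokes: the Tseng--Yau identity $[d^{\ast}d+d^{\La\ast}d^{\La},L]=0$. Concretely, from (4) one gets $d^{\La\ast}\a=0$, hence (Lemma \ref{L6}) $L^{n-k}\a\in\mathcal{H}^{2n-k}_{d}$, hence $\ast(L^{n-k}\a)\in\mathcal{H}^{k}_{d}\subset\mathcal{H}^{k}_{dd^{\La}}$, which gives $d^{\La}L^{n-k}\a=0$; then
$$0=(d^{\ast}d+d^{\La\ast}d^{\La})L^{n-k}\a=L^{n-k}(d^{\ast}d+d^{\La\ast}d^{\La})\a$$
and injectivity of $L^{n-k}$ on $\Om^{k}$ forces $(d^{\ast}d+d^{\La\ast}d^{\La})\a=0$, i.e.\ $d^{\La}\a=0$. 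You would need to import this identity (or an equivalent substitute) to repair your argument; the same repair is needed for your ``by symmetry'' treatment of $(3)\Leftrightarrow(1)$, where the paper uses the companion identity $[dd^{\ast}+d^{\La}d^{\La\ast},L]=0$.
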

\begin{proof}
	$(1)\Longrightarrow(2)$, $(1)\Longrightarrow(3)$ and $(1)\Longrightarrow(4)$ : they follow from the identity $[\De_{d}+\De_{d^{\La}},L]=0$.
	
	$(2)\Longrightarrow(1)$: let $\a\in \mathcal{H}^{k}_{d}$. Since the map $L^{n-k}:\mathcal{H}^{k}_{d}\rightarrow\mathcal{H}^{2n-k}_{d}$ is an isomorphism, it follows that
	$$d\a=0,\ d^{\ast}\a=0, dL^{n-k}\a=0,\ d^{\ast}L^{n-k}\a=0.$$
	By Lemma \ref{L6}, we get $L^{n-k-1}d^{\La\ast}\a=0$. Therefore, $d^{\La\ast}\a=0$ since the map $L^{n-k-1}:\Om^{k+1}\rightarrow\Om^{2n-k-1}$ is injective. Noting that $\ast (L^{n-k}\a)$ is  also in $\mathcal{H}^{k}_{d}$, a similar argument yields  $d^{\La\ast}(\ast (L^{n-k}\a))=0$, i.e., $d^{\La} (L^{n-k}\a)=0$. Using  the identity $[d^{\ast}d+d^{\La\ast}d^{\La},L]=0$ (see \cite[Lemma 3.7]{TY1}), we obtain
	\begin{equation*}
	\begin{split}
	0&=[d^{\ast}d+d^{\La\ast}d^{\La},L^{n-k}]\a\\
	&=(d^{\ast}d+d^{\La\ast}d^{\La}) (L^{n-k}\a)-L^{n-k}(d^{\ast}d+d^{\La\ast}d^{\La})\a\\
	&=-L^{n-k}(d^{\ast}d+d^{\La\ast}d^{\La})\a.
	\end{split}
	\end{equation*}
	This gives that $(d^{\ast}d+d^{\La\ast}d^{\La})\a=0$, which implies $d^{\La}\a=0$.  Thus, we prove that $\mathcal{H}^{k}_{d}\subset\mathcal{H}^{k}_{d^{\La}}$.  Combining the fact that $\dim\mathcal{H}^{k}_{d}=\dim\mathcal{H}^{k}_{d^{\La}}$, we get $\mathcal{H}^{k}_{d}=\mathcal{H}^{k}_{d^{\La}}$.
	
	$(3)\Longrightarrow(1)$: let $\gamma\in \mathcal{H}^{k}_{d^{\La}}$. Since the map $L^{n-k}:\mathcal{H}^{k}_{d^{\La}}\rightarrow\mathcal{H}^{2n-k}_{d^{\La}}$ is an isomorphism, it follows that
	
	$$d^{\La}\gamma=0,\ d^{\La\ast}\gamma=0, d^{\La}L^{n-k}\gamma=0,\ d^{\La\ast}L^{n-k}\gamma=0.$$
	By Lemma \ref{L6}, we get $L^{n-k-1}d\gamma=0$. Therefore, $d\gamma=0$ since the map $L^{n-k-1}:\Om^{k+1}\rightarrow\Om^{2n-k-1}$ is injective. Noting that $\ast (L^{n-k}\gamma)$ is  also in $\mathcal{H}^{k}_{d^{\La}}$, a similar argument yields  $d(\ast (L^{n-k}\gamma))=0$, i.e., $d^{\ast} (L^{n-k}\gamma)=0$. Using  the identity $[dd^{\ast}+d^{\La}d^{\La\ast},L]=0$ (see \cite[Lemma 3.18]{TY1}), we obtain
	\begin{equation*}
	\begin{split}
	0&=[dd^{\ast}+d^{\La}d^{\La\ast},L^{n-k}]\gamma\\
	&=(dd^{\ast}+d^{\La}d^{\La\ast})(L^{n-k}\gamma)-L^{n-k}(dd^{\ast}+d^{\La}d^{\La\ast})\gamma\\
	&=-L^{n-k}(dd^{\ast}+d^{\La}d^{\La\ast})\gamma,\\
	\end{split}
	\end{equation*}
	This gives that $(dd^{\ast}+d^{\La}d^{\La\ast})\gamma=0$, which implies $d^{\ast}\gamma=0$.  Thus, we prove that $\mathcal{H}^{k}_{d^{\La}}\subset \mathcal{H}^{k}_{d}$.  Combining the fact that $\dim\mathcal{H}^{k}_{d}=\dim\mathcal{H}^{k}_{d^{\La}}$, we get $\mathcal{H}^{k}_{d}=\mathcal{H}^{k}_{d^{\La}}$.

	$(4)\Longrightarrow(1)$: let $\de\in \mathcal{H}^{k}_{d}$, $0<k<n$. Since $\mathcal{H}^{k}_{d}\subset\mathcal{H}^{k}_{dd^{\La}}$, it follows that
	$$d\de=0,\ d^{\ast}\de=0,\ d^{\La\ast}\de=0.$$
	By Lemma \ref{L6}, we get $d^{\ast}L^{n-k}\de=L^{n-k}d^{\ast}\de+(n-k)L^{n-k-1}d^{\La\ast}\de=0$.Therefore, $\ast(L^{n-k}\de)\in\mathcal{H}^{k}_{d}$. Also, since $\mathcal{H}^{k}_{d}\subset\mathcal{H}^{k}_{dd^{\La}}$, we have $d^{\La\ast}(\ast(L^{n-k}\de))=0$, i.e., $d^{\La}L^{n-k}\de=0$. Noting that $[d^{\ast}d+d^{\La\ast}d^{\La},L]=0$ and $[d,L]=0$, we obtain
	\begin{equation*}
	\begin{split}
	0&=[d^{\ast}d+d^{\La\ast}d^{\La},L^{n-k}]\de\\
	&=(d^{\ast}d+d^{\La\ast}d^{\La})L^{n-k}\de-L^{n-k}(d^{\ast}d+d^{\La\ast}d^{\La})\de\\
	&=-L^{n-k}(d^{\ast}d+d^{\La\ast}d^{\La})\de,\\
	\end{split}
	\end{equation*}
	Hence, $(d^{\ast}d+d^{\La\ast}d^{\La})\de=0$ since the map $L^{n-k}$ is injective. It implies that $d^{\La}\de=0$. Thus, we prove that $\mathcal{H}^{k}_{d}\subset\mathcal{H}^{k}_{d^{\La}}$.  Combining the fact that $\dim\mathcal{H}^{k}_{d}=\dim\mathcal{H}^{k}_{d^{\La}}$, we get $\mathcal{H}^{k}_{d}=\mathcal{H}^{k}_{d^{\La}}$.
\end{proof}
\begin{corollary}\label{C2}
	Let $(X,J,\w)$ be a closed  $2n$-dimensional almost K\"{a}hler manifold. If HLC on $(\mathcal{H}^{\bullet}_{d}(X),\w)$, then  $$\mathcal{H}^{k}_{d+d^{\La}}(X)=\mathcal{H}^{k}_{d}(X),\ \forall\ 0\leq k<n.$$
\end{corollary}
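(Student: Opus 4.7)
The plan is to establish both inclusions $\mathcal{H}^{k}_{d}\subset\mathcal{H}^{k}_{d+d^{\La}}$ and $\mathcal{H}^{k}_{d+d^{\La}}\subset\mathcal{H}^{k}_{d}$ directly, translating the HLC hypothesis through Proposition \ref{P8} into the two usable facts $\mathcal{H}^{k}_{d}=\mathcal{H}^{k}_{d^{\La}}$ and the $dd^{\La}$-lemma.

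The forward inclusion is immediate: given $\a\in\mathcal{H}^{k}_{d}$, the equivalence (1)$\Leftrightarrow$(2) in Proposition \ref{P8} rewrites HLC as $\mathcal{H}^{k}_{d}=\mathcal{H}^{k}_{d^{\La}}$, so $d^{\La}\a=d^{\La\ast}\a=0$; combined with $d\a=0$ and $d^{\ast}\a=0$, this gives $d\a=d^{\La}\a=(dd^{\La})^{\ast}\a=d^{\La\ast}d^{\ast}\a=0$, which is exactly the definition of $\mathcal{H}^{k}_{d+d^{\La}}$.

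For the reverse inclusion I would take $\a\in\mathcal{H}^{k}_{d+d^{\La}}$ and apply the Hodge decomposition for $d$ to write $\a=\a_{h}+d\gamma$ with $\a_{h}\in\mathcal{H}^{k}_{d}$. HLC gives $d^{\La}\a_{h}=0$, and together with $d^{\La}\a=0$ this shows $d\gamma$ is both $d$-exact and $d^{\La}$-closed. Since HLC on $(\mathcal{H}^{\bullet}_{d},\w)$ implies HLC on $(H^{\bullet}_{dR},\w)$, which on a closed symplectic manifold is equivalent to the $dd^{\La}$-lemma, we may write $d\gamma=dd^{\La}\eta$ for some $\eta$. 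To finish I would pair the remaining condition $(dd^{\La})^{\ast}\a=0$ with $\eta$, expand using $\a=\a_{h}+dd^{\La}\eta$, and exploit $\langle\a_{h},dd^{\La}\eta\rangle_{L^{2}}=\langle d^{\ast}\a_{h},d^{\La}\eta\rangle_{L^{2}}=0$ to extract $\|dd^{\La}\eta\|^{2}=0$. Thus $d\gamma=0$ and $\a=\a_{h}\in\mathcal{H}^{k}_{d}$.

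The main obstacle is the use of the $dd^{\La}$-lemma, which is not a feature of almost K\"{a}hler geometry per se but is extracted from the chain HLC on $\mathcal{H}^{\bullet}_{d}\Rightarrow$ HLC on $H^{\bullet}_{dR}\Rightarrow dd^{\La}$-lemma, combining the implication recorded in the introduction with the Mathieu--Yan characterisation on closed symplectic manifolds. Once this input is in hand, the remainder of the argument reduces to the one-line inner product identity above.
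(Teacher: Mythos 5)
Your argument is correct, and its first half coincides with the paper's: both deduce $\mathcal{H}^{k}_{d}=\mathcal{H}^{k}_{d^{\La}}$ from Proposition \ref{P8} to get $\mathcal{H}^{k}_{d}\subset\mathcal{H}^{k}_{d+d^{\La}}$. Where you diverge is the reverse inclusion. The paper finishes with a dimension count: HLC on $(\mathcal{H}^{\bullet}_{d},\w)$ implies HLC on $(H^{\bullet}_{dR},\w)$, which by \cite[Corollary 3.14]{TY1} gives $\dim H^{k}_{dR}=\dim H^{k}_{d+d^{\La}}$, hence $\dim\mathcal{H}^{k}_{d}=\dim\mathcal{H}^{k}_{d+d^{\La}}$ and the inclusion is an equality. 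You instead prove $\mathcal{H}^{k}_{d+d^{\La}}\subset\mathcal{H}^{k}_{d}$ element by element: writing $\a=\a_{h}+d\gamma$, observing $d\gamma$ is $d$-exact and $d^{\La}$-closed, invoking the $dd^{\La}$-lemma to get $d\gamma=dd^{\La}\eta$, and then killing $\|dd^{\La}\eta\|^{2}$ by pairing $(dd^{\La})^{\ast}\a=0$ against $\eta$ and using $\langle\a_{h},dd^{\La}\eta\rangle_{L^{2}}=\langle d^{\ast}\a_{h},d^{\La}\eta\rangle_{L^{2}}=0$. Both routes ultimately draw on the same circle of equivalences (HLC on de Rham $\Leftrightarrow$ $dd^{\La}$-lemma $\Leftrightarrow$ $b^{k}=h^{k}_{d+d^{\La}}$ on closed symplectic manifolds), so neither is logically cheaper; your version has the merit of being constructive and of exhibiting explicitly why a $(d+d^{\La})$-harmonic form must already be $\De_{d}$-harmonic, while the paper's dimension count is shorter and avoids the Hodge-decomposition bookkeeping. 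Your inner-product manipulations check out: $(dd^{\La})^{\ast}=d^{\La\ast}d^{\ast}$, so $d^{\ast}\a=0$ indeed gives the forward inclusion, and the adjunction steps in the reverse direction are all legitimate.
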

\begin{proof}
	Since HLC holds for $(\mathcal{H}^{\bullet}_{d},\w)$, by Proposition \ref{P8}, we have $\mathcal{H}^{k}_{d}=\mathcal{H}^{k}_{d^{\La}}$ for any $0\leq k<n$. Therefore, $\mathcal{H}^{k}_{d}\subset\mathcal{H}^{k}_{d+d^{\La}}$.  Noting that HLC  holds for  $(\mathcal{H}^{\bullet}_{d},\w)$ implies the HLC holds for $(H^{\bullet}_{dR},\w)$ (see \cite{TW}), as shown in   \cite[Corollary 3.14]{TY1}, we can get
	$\dim H^{k}_{dR}=\dim H^{k}_{d+d^{\La}}$. It implies that $\dim\mathcal{H}^{k}_{d}=\dim \mathcal{H}^{k}_{d+d^{\La}}$. We then have $\mathcal{H}^{k}_{d+d^{\La}}=\mathcal{H}^{k}_{d}$.
\end{proof}
\begin{proposition}\label{P7}
Let $(X,J,\w)$ be a closed  $4$-dimensional almost K\"{a}hler manifold.  If $\mathcal{H}^{2}_{d+d^{\La}}(X)=\mathcal{H}^{2}_{d}(X)$, then $\mathcal{H}^{2}_{d}(X)=\mathcal{H}^{2}_{d^{\La}}(X)$.  
\end{proposition}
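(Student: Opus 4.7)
The plan is to reduce the claimed equality to the single inclusion $\mathcal{H}^{2}_{d}(X)\subset\mathcal{H}^{2}_{d^{\La}}(X)$. The reverse inclusion (or rather, equality of dimensions) will then come for free: since the symplectic Hodge-star operator $\ast_{s}$ induces an isomorphism $H^{k}_{d}(X)\cong H^{2n-k}_{d^{\La}}(X)$ (recalled in the preliminaries) and Poincar\'{e} duality gives $\dim H^{k}_{d}=\dim H^{2n-k}_{d}$, one obtains $\dim\mathcal{H}^{2}_{d}(X)=\dim\mathcal{H}^{2}_{d^{\La}}(X)$, so any proper inclusion would be impossible.

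Take $\a\in\mathcal{H}^{2}_{d}(X)$, i.e. $d\a=d^{\ast}\a=0$. The hypothesis $\mathcal{H}^{2}_{d+d^{\La}}(X)=\mathcal{H}^{2}_{d}(X)$ immediately promotes $\a$ to a $(d+d^{\La})$-harmonic form, and so in particular forces $d^{\La}\a=0$. The only missing condition for $\a\in\mathcal{H}^{2}_{d^{\La}}(X)$ is $d^{\La\ast}\a=0$, and this is where the argument has to do some actual work.

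The key device is to feed $\ast\a$ rather than $\a$ into the hypothesis. Because $\ast$ commutes with $\De_{d}$, and because $\ast$ preserves $\Om^{2}(X)$ in the middle degree of a $4$-manifold, the form $\ast\a$ also lies in $\mathcal{H}^{2}_{d}(X)$; by the hypothesis, it therefore lies in $\mathcal{H}^{2}_{d+d^{\La}}(X)$, which gives $d^{\La}(\ast\a)=0$. Now invoke the explicit formulas $d^{\La}=-\ast J^{-1}dJ\ast$ and $d^{\La\ast}=J^{-1}dJ$ from the preliminaries, together with the involution property $\ast^{2}=\mathrm{Id}$ on $\Om^{2}$ of a $4$-manifold, to compute
\begin{equation*}
0=d^{\La}(\ast\a)=-\ast J^{-1}dJ\,\ast\ast\a=-\ast J^{-1}dJ\,\a=-\ast(d^{\La\ast}\a).
\end{equation*}
Since $\ast$ is a pointwise isomorphism, this yields $d^{\La\ast}\a=0$, hence $\a\in\mathcal{H}^{2}_{d^{\La}}(X)$, and the proof is complete upon invoking the dimension count above.

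The only mildly non-obvious step is recognising that one should test the hypothesis against $\ast\a$; everything else is an unwinding of the definitions. Note that the argument uses the $4$-dimensional middle-degree setting in an essential way through $\ast:\Om^{2}\to\Om^{2}$ and $\ast^{2}=\mathrm{Id}$ on $\Om^{2}$, so I do not expect this particular shortcut to generalise beyond the stated dimension without additional input — a higher-dimensional analogue would presumably need the full strength of Proposition~\ref{P8} or an extension thereof to the top non-trivial degree.
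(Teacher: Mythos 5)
Your proof is correct, but it takes a genuinely different route from the paper's. The paper decomposes $\a=B+f\w$ via the Lefschetz decomposition, uses the commutation of $d^{\ast}d+d^{\La\ast}d^{\La}$ with $L$ and $\La$ to isolate $dB=d^{\La}B=0$ and $f=\mathrm{const}$, and then passes to the self-dual/anti-self-dual splitting $B=B^{+}+B^{-}$ (with $B^{\pm}$ being $J$-anti-invariant resp.\ $J$-invariant) to deduce $d^{\La\ast}B=J^{-1}d(B^{-}-B^{+})=0$ from $d^{\ast}B=0$. You instead feed $\ast\a$ into the hypothesis: since $\ast$ preserves $\mathcal{H}^{2}_{d}$ in the middle degree of a $4$-manifold, the assumption gives $d^{\La}(\ast\a)=0$, and the conjugation formulas $d^{\La}=-\ast J^{-1}dJ\ast$ and $d^{\La\ast}=J^{-1}dJ$ together with $\ast^{2}=\mathrm{Id}$ on $\Om^{2}$ turn this directly into $d^{\La\ast}\a=0$. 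Both arguments close with the same dimension count $\dim\mathcal{H}^{2}_{d}=\dim\mathcal{H}^{2}_{d^{\La}}$, and both use four-dimensionality essentially (you through $\ast:\Om^{2}\to\Om^{2}$, the paper through the SD/ASD decomposition of $2$-forms). Your version is shorter and needs fewer inputs --- no commutator identities from Tseng--Yau and no curvature of the Lefschetz/SD-ASD structure --- while the paper's proof extracts more intermediate information (the primitive part is $d$- and $d^{\La}$-closed, the $\w$-component is constant) that is reused in spirit elsewhere. One small presentational point: for $k=2$ on a $4$-manifold the Brylinski isomorphism $H^{2}_{d}\cong H^{2}_{d^{\La}}$ already gives the dimension equality without invoking Poincar\'e duality, so that extra step in your preamble is harmless but unnecessary.
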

\begin{proof}
	For any $2$-form $\a$, we have the Lefschetz decomposition 
	$$\a=B+f\w,$$ 
	where $B\in\ker\La$ and $f$ is a function.  Suppose that $\a\in\mathcal{H}^{2}_{d}$, then $d\a=0$ and $d^{\ast}\a=0$. Since $\mathcal{H}^{2}_{d+d^{\La}}=\mathcal{H}^{2}_{d}$, we also have  $d^{\La}\a=0$. Applying $d^{\ast}d+d^{\La\ast}d^{\La}$ to $\a$ and noting that 
	$[d^{\ast}d+d^{\La\ast}d^{\La},L]=0$ (see \cite{TY1}), we  get
	$$(d^{\ast}d+d^{\La\ast}d^{\La})B+L(d^{\ast}df)=0.$$
	Since $[d^{\ast}d+d^{\La\ast}d^{\La},\Lambda]=0$ (see also \cite{TY1}), by applying $\Lambda$ to the both sides, we conclude that
	$$(d^{\ast}d+d^{\La\ast}d^{\La})B=0,\ i.e.,\ dB=d^{\La}B=0$$
	and
	$$d^{\ast}df=0, i.e., f=constant.$$
	For the $2$-form $B$, we  have another decomposition
	$$B=B^{+}+B^{-},$$
	where $B^{\pm}$ is self-dual (resp. anti-self-dual) $2$-form. Furthermore $B^{\pm}$ is $J$-anti-invariant (resp. $J$-invariant). Since $d^{\ast}\a=0$, it follows that $d^{\ast}B=0$, i.e., 
	$$dB^{+}-dB^{-}=0.$$
	Therefore,
	$$d^{\La\ast}B=J^{-1}dJB=J^{-1}d(B^{-}-B^{+})=0.$$
	Hence we prove that $d^{\La\ast}\a=0$. It implies that  $\mathcal{H}^{2}_{d}\subset\mathcal{H}^{2}_{d^{\La}}$. Since $\dim\mathcal{H}^{2}_{d}=\dim\mathcal{H}^{2}_{d^{\La}}$, we have $\mathcal{H}^{2}_{d}=\mathcal{H}^{2}_{d^{\La}}$
\end{proof}
\begin{remark}
	According to Angella-Tomassini's ideas, we expect to obtain  the converse of  the Corollary \ref{C2}, i.e.
	the condition $\mathcal{H}^{\bullet}_{d+d^{\La}}=\mathcal{H}^{\bullet}_{d}$ implies that the HLC on $(\mathcal{H}^{\bullet}_{d},\w)$. Unfortunately,  it seems that we cannot get this conclusion. In fact, in the $4$-dimensional  case, 
	we have $\mathcal{H}^{2}_{d}(X)=\mathcal{H}^{2}_{d^{\La}}(X)$ by  Proposition \ref{P7}; but from Proposition \ref{P6}, the condition $\mathcal{H}^{1}_{d+d^{\La}}=\mathcal{H}^{1}_{d}$ is trivially valid, so we cannot obtain any new properties on the space of harmonic $1$-forms.
\end{remark}
\begin{theorem}
	Let $(X,J,\w)$ be a closed  $2n$-dimensional almost  K\"{a}hler manifold. Then the following statements are equivalent:\\
	(1) $\mathcal{H}^{k}_{dd^{\La}}(X)=\mathcal{H}^{k}_{d}(X)$, $\forall\ 0\leq k<n$,\\
	(2) HLC on $(\mathcal{H}_{d}^{\bullet}(X),\w)$.
\end{theorem}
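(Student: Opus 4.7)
The plan is to route both implications through Proposition \ref{P8}, whose part (4) already provides, for each fixed $0\leq k<n$, the equivalence between the inclusion $\mathcal{H}^{k}_{d}(X)\subset\mathcal{H}^{k}_{dd^{\La}}(X)$ and the hard Lefschetz isomorphism $L^{n-k}:\mathcal{H}^{k}_{d}(X)\xrightarrow{\cong}\mathcal{H}^{2n-k}_{d}(X)$. The only residual task will be to see when this one-sided inclusion upgrades to an equality, and this will be settled by a dimension count relying on Corollary \ref{C2} and the Tseng--Yau identity $\dim H^{k}_{d+d^{\La}}=\dim H^{k}_{dd^{\La}}$ cited in the paper.

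The direction $(1)\Rightarrow(2)$ will be immediate: for each $0\leq k<n$, the assumed equality $\mathcal{H}^{k}_{dd^{\La}}(X)=\mathcal{H}^{k}_{d}(X)$ in particular gives the inclusion $\mathcal{H}^{k}_{d}(X)\subset\mathcal{H}^{k}_{dd^{\La}}(X)$, so Proposition \ref{P8}(4)$\Leftrightarrow$(2) yields the isomorphism $L^{n-k}:\mathcal{H}^{k}_{d}(X)\to\mathcal{H}^{2n-k}_{d}(X)$. Ranging over $k$ produces HLC on $(\mathcal{H}^{\bullet}_{d}(X),\w)$.

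For $(2)\Rightarrow(1)$ I would fix $0\leq k<n$. The implication Proposition \ref{P8}(2)$\Rightarrow$(4) again gives the inclusion $\mathcal{H}^{k}_{d}(X)\subset\mathcal{H}^{k}_{dd^{\La}}(X)$, so only the reverse inclusion needs justification. Here I would invoke Corollary \ref{C2}, which under HLC supplies $\mathcal{H}^{k}_{d+d^{\La}}(X)=\mathcal{H}^{k}_{d}(X)$, together with the Hodge-type isomorphisms $H^{k}_{d+d^{\La}}\cong\mathcal{H}^{k}_{d+d^{\La}}$, $H^{k}_{dd^{\La}}\cong\mathcal{H}^{k}_{dd^{\La}}$ and the Tseng--Yau identity to obtain
$$\dim\mathcal{H}^{k}_{dd^{\La}}(X)=\dim\mathcal{H}^{k}_{d+d^{\La}}(X)=\dim\mathcal{H}^{k}_{d}(X).$$
Combined with the inclusion above, this forces the equality $\mathcal{H}^{k}_{dd^{\La}}(X)=\mathcal{H}^{k}_{d}(X)$ at each such $k$.

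The argument will therefore be short; the only nontrivial ingredient is the dimension matching in $(2)\Rightarrow(1)$, which rides entirely on Corollary \ref{C2} and the Tseng--Yau equality. I do not anticipate needing any new commutator identity or elliptic estimate beyond $[\De_{d}+\De_{d^{\La}},L]=0$ already invoked in the proof of Proposition \ref{P8}.
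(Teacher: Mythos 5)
Your proposal is correct and follows essentially the same route as the paper: both directions are reduced to Proposition \ref{P8}, with the only substantive step being the dimension count that upgrades the inclusion $\mathcal{H}^{k}_{d}\subset\mathcal{H}^{k}_{dd^{\La}}$ to an equality in $(2)\Rightarrow(1)$. The paper gets $\dim\mathcal{H}^{k}_{dd^{\La}}=\dim\mathcal{H}^{k}_{d}$ directly from HLC on $H^{\bullet}_{dR}$ via $\dim H^{k}_{dR}=\dim H^{k}_{dd^{\La}}$, whereas you detour through Corollary \ref{C2} and the Tseng--Yau equality $\dim H^{k}_{d+d^{\La}}=\dim H^{k}_{dd^{\La}}$; these rest on the same underlying facts and both are valid.
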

\begin{proof}
	$(2)\Longrightarrow(1)$ By Proposition \ref{P8}, for any $0\leq k<n$, we have $\mathcal{H}^{k}_{d}=\mathcal{H}^{k}_{d^{\La}}$. Therefore, $\mathcal{H}^{k}_{d}\subset\mathcal{H}^{k}_{dd^{\La}}$. Since HLC on $(\mathcal{H}_{d}^{\bullet},\w)$ implies that  HLC on $(H_{dR}^{\bullet},\w)$, we get $\dim H^{k}_{dR}=\dim H^{k}_{dd^{\La}}$, i.e., $\dim\mathcal{H}^{k}_{d}=\dim\mathcal{H}^{k}_{dd^{\La}}$. Therefore, we have $\mathcal{H}^{k}_{dd^{\La}}=\mathcal{H}^{k}_{d}$.
	
	$(1)\Longrightarrow(2)$: it follows from Proposition \ref{P8}.
\end{proof}
\subsection{An estimate on $\ker\De_{\bar{\pa}+\mu}$}
We consider the operators $\bar{\pa}+\mu$ and $\pa+\bar{\mu}$ on the almost K\"{a}hler manifold (cf. \cite{dT,Hua,TT20}). For closed almost K\"{a}hler manifolds,  we have (cf. \cite{dT,Hua})
\begin{equation}\label{E11}
\De_{\bar{\pa}+\mu}=\De_{\pa+\bar{\mu}}=\frac{1}{4}(\De_{d}+\De_{d^{\La}}).
\end{equation}
Hence we obtain
\begin{theorem}(cf. \cite{Hua})\label{T7}
For any closed almost K\"{a}hler manifold of dimension $2n$ and for all $k$,  the following identities hold:
	$$\mathcal{H}^{k}_{d}\cap\mathcal{H}^{k}_{d^{\La}}=\ker\De_{\bar{\pa}+\mu}\cap\Om^{k}=\ker\De_{\pa+\bar{\mu}}\cap\Om^{k}.$$	
\end{theorem}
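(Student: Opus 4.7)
The plan is to reduce Theorem \ref{T7} to the operator identity (\ref{E11}), namely
$$\De_{\bar{\pa}+\mu} = \De_{\pa+\bar{\mu}} = \tfrac{1}{4}(\De_d + \De_{d^{\La}}),$$
after which the theorem becomes a straightforward consequence of the closedness of $X$ and the nonnegativity of the four relevant Laplacian-type operators. Since the operators $\De_{\bar{\pa}+\mu}$ and $\De_{\pa+\bar{\mu}}$ are literally equal as operators on $\Om^{\bullet}(X)$, their kernels coincide, which takes care of the rightmost equality in the statement for free.

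For the remaining identity, I would prove the two inclusions separately. The inclusion $\mathcal{H}^k_d \cap \mathcal{H}^k_{d^{\La}} \subset \ker \De_{\bar{\pa}+\mu} \cap \Om^k$ is immediate from (\ref{E11}): if $\alpha$ lies in $\mathcal{H}^k_d \cap \mathcal{H}^k_{d^{\La}}$, then $\De_d \alpha = 0$ and $\De_{d^{\La}} \alpha = 0$, so $\De_{\bar{\pa}+\mu}\alpha = \tfrac{1}{4}(\De_d + \De_{d^{\La}})\alpha = 0$.

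For the reverse inclusion, take $\alpha \in \ker \De_{\bar{\pa}+\mu}\cap \Om^k$ and pair $(\De_d+\De_{d^{\La}})\alpha = 0$ with $\alpha$ in the $L^2$-inner product on the closed manifold $X$. Using self-adjointness of $d, d^*, d^{\La}, d^{\La*}$ one gets
$$0 = \langle (\De_d+\De_{d^{\La}})\alpha, \alpha\rangle_{L^2} = \|d\alpha\|^2_{L^2} + \|d^{\ast}\alpha\|^2_{L^2} + \|d^{\La}\alpha\|^2_{L^2} + \|d^{\La\ast}\alpha\|^2_{L^2}.$$
Each summand is nonnegative, so each vanishes, yielding $d\alpha = d^{\ast}\alpha = d^{\La}\alpha = d^{\La\ast}\alpha = 0$. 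In particular $\alpha \in \mathcal{H}^k_d \cap \mathcal{H}^k_{d^{\La}}$.

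There is really no serious obstacle here; the entire content of the theorem is packaged into the identity (\ref{E11}), which the paper attributes to earlier work of de Bartolomeis--Tomassini and Huang. The only point that needs a moment of care is making sure that the $L^2$-pairing argument is applied on a closed manifold so that integration by parts produces the four nonnegative norm-squares; this is why compactness of $X$ is used.
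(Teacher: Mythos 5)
Your proposal is correct and follows essentially the same route as the paper, which simply states the operator identity $\De_{\bar{\pa}+\mu}=\De_{\pa+\bar{\mu}}=\frac{1}{4}(\De_{d}+\De_{d^{\La}})$ and says ``hence we obtain'' the theorem. You have merely made explicit the standard deduction the paper leaves implicit: equal operators have equal kernels, and on a closed manifold the $L^{2}$-pairing $\langle(\De_{d}+\De_{d^{\La}})\a,\a\rangle_{L^{2}}=\|d\a\|^{2}+\|d^{\ast}\a\|^{2}+\|d^{\La}\a\|^{2}+\|d^{\La\ast}\a\|^{2}$ forces all four terms to vanish.
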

We also have
\begin{proposition}(cf. \cite{CW2})\label{P10}
Let $(X,J,\w)$ be a closed almost K\"{a}hler manifold. We have a decomposition as follows:
$$\Om^{k}(X)=\ker(\De_{\bar{\pa}+\mu})\cap\Om^{k}(X)\bigoplus \mathrm{Im}(\De_{\bar{\pa}+\mu})\cap\Om^{k}(X).$$
\end{proposition}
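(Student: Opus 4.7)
The plan is to mirror the strategy of Proposition \ref{P1} in the paper: produce the orthogonal Hodge-type splitting by showing that $\De_{\bar{\pa}+\mu}$ is a self-adjoint elliptic differential operator of order two, and then invoke the standard decomposition theorem for such operators on a closed manifold (e.g.\ Demailly's treatment, as cited already in the proof of Proposition \ref{P1}).

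First I would record self-adjointness. By definition
\begin{equation*}
\De_{\bar{\pa}+\mu}=(\bar{\pa}+\mu)(\bar{\pa}+\mu)^{\ast}+(\bar{\pa}+\mu)^{\ast}(\bar{\pa}+\mu),
\end{equation*}
which is the sum of two operators of the form $AA^{\ast}$ and $A^{\ast}A$, both manifestly self-adjoint on the $L^{2}$-completion of $\Om^{k}(X)$. Next I would check that the operator preserves $k$-forms: although $\bar{\pa}+\mu$ does not preserve bi-degree, both components shift total degree by $+1$, so $(\bar{\pa}+\mu)(\bar{\pa}+\mu)^{\ast}$ and $(\bar{\pa}+\mu)^{\ast}(\bar{\pa}+\mu)$ preserve the total degree $k$, confirming $\De_{\bar{\pa}+\mu}\colon\Om^{k}(X)\to\Om^{k}(X)$.

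The main step is ellipticity, and this is exactly where the identity (\ref{E11}) does the work. Since
\begin{equation*}
\De_{\bar{\pa}+\mu}=\tfrac{1}{4}(\De_{d}+\De_{d^{\La}}),
\end{equation*}
and because $\mu$, $\bar{\mu}$ are zero-order operators (Lemma \ref{L7}) which therefore do not enter the principal symbol, the principal symbol of $\De_{\bar{\pa}+\mu}$ coincides (up to a constant factor) with that of $\tfrac{1}{2}\De_{d}$, namely $\tfrac{1}{2}|\xi|_{g}^{2}\cdot\mathrm{Id}$. In particular $\sigma_{2}(\De_{\bar{\pa}+\mu})(\xi)$ is invertible for every $\xi\neq 0$, so $\De_{\bar{\pa}+\mu}$ is a second-order elliptic operator. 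I would write out this symbol computation in one or two lines, treating the Laplacians $\De_{d}$ and $\De_{d^{\La}}$ as known elliptic operators.

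With self-adjointness and ellipticity on a closed manifold established, the conclusion follows from the standard orthogonal Hodge decomposition for such operators: $L^{2}\Om^{k}(X)=\ker(\De_{\bar{\pa}+\mu})\oplus\mathrm{Im}(\De_{\bar{\pa}+\mu})$, which restricts to smooth forms by elliptic regularity, yielding
\begin{equation*}
\Om^{k}(X)=\ker(\De_{\bar{\pa}+\mu})\cap\Om^{k}(X)\bigoplus\mathrm{Im}(\De_{\bar{\pa}+\mu})\cap\Om^{k}(X).
\end{equation*}
The only potential obstacle is verifying cleanly that $\mu$ and $\bar{\mu}$, being algebraic, contribute only to lower-order terms of $\De_{\bar{\pa}+\mu}$ so that the elliptic principal symbol is indeed that of a genuine Laplacian; once that observation is made, everything else is a direct appeal to classical elliptic theory, exactly as in Proposition \ref{P1}.
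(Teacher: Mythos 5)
Your proposal is correct, and it is essentially the argument the paper itself uses for the analogous Proposition \ref{P1} (the paper states Proposition \ref{P10} without proof, citing \cite{CW2}): identify $\De_{\bar{\pa}+\mu}$ as a self-adjoint second-order operator whose principal symbol is that of $\tfrac{1}{2}\De_{d}$ because $\mu,\bar{\mu}$ are zero-order, then invoke the standard elliptic decomposition on a closed manifold. The only point worth writing out explicitly is that $\De_{d^{\La}}=J^{-1}\De_{d}J$ (so it genuinely contributes the symbol $|\xi|^{2}\mathrm{Id}$), or equivalently that the symbol of $\De_{\bar{\pa}+\mu}$ equals that of $\De_{\bar{\pa}}$; either route closes the one step you flagged.
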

\begin{proposition}\label{P11}
Suppose that the closed almost K\"{a}hler manifold $X$ is in $\widetilde{\mathcal{M}}(k,c)$. Then for any $\gamma^{k}\in(\mathcal{H}^{k}_{\bar{\pa}+\mu}(X))^{\perp}$, we have
\begin{equation}\label{E12}
\langle\De_{d}\gamma^{k},\gamma^{k}\rangle_{L^{2}}\geq c_{6}(c-c_{5})\langle(\De_{\mu}+\De_{\bar{\mu}})\gamma^{k},\gamma^{k}\rangle_{L^{2}},
\end{equation}
where $c_{5}, c_{6}$ are two uniform positive constants. In particular, if $\de^{k}\in(\mathcal{H}^{k}_{d}(X))^{\perp}$, then
\begin{equation}\label{E15}
\langle\De_{d}\de^{k},\de^{k}\rangle_{L^{2}}\geq c_{6}(c-c_{5})\langle(\De_{\mu}+\De_{\bar{\mu}})\de^{k},\de^{k}\rangle_{L^{2}}.
\end{equation} 
\end{proposition}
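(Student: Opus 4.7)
The strategy mirrors that of Proposition \ref{P9}, with $\De_{\bar\pa+\mu}$ playing the role of the ``good'' positive operator in place of $\De_{\bar\pa}+\De_\mu$. First I would reorganise $\De_d$ around $\De_{\bar\pa+\mu}$: combining Proposition \ref{P4}(3), the expansion $\De_{\bar\pa+\mu}=\De_{\bar\pa}+\De_\mu+[\bar\pa,\mu^\ast]+[\mu,\bar\pa^\ast]$, and the identities $[\bar\mu,\pa^\ast]=[\bar\pa,\mu^\ast]$, $[\mu,\bar\pa^\ast]=[\pa,\bar\mu^\ast]$ of Proposition \ref{P3}(2), a short manipulation gives
$$\De_d=2\De_{\bar\pa+\mu}+2\bigl([\bar\mu^\ast,\bar\pa]+[\bar\mu,\bar\pa^\ast]+[\mu^\ast,\pa]+[\mu,\pa^\ast]\bigr).$$
Pairing with $\gamma^{k}$ and applying the AM--GM argument used in the proof of Proposition \ref{P9} to each of the four residual brackets, for every $\varepsilon>0$ one obtains
$$\langle\De_d\gamma^k,\gamma^k\rangle_{L^2}\geq 2\langle\De_{\bar\pa+\mu}\gamma^k,\gamma^k\rangle_{L^2}-2\varepsilon\langle(\De_\pa+\De_{\bar\pa})\gamma^k,\gamma^k\rangle_{L^2}-\tfrac{2}{\varepsilon}\langle(\De_\mu+\De_{\bar\mu})\gamma^k,\gamma^k\rangle_{L^2}.$$

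The second step is to trade the term $\langle(\De_\pa+\De_{\bar\pa})\gamma^k,\gamma^k\rangle_{L^2}$, which is not directly controlled by the hypothesis, for quantities that are. Averaging the identities $\De_{\bar\pa+\mu}=\De_{\bar\pa}+\De_\mu+[\bar\pa,\mu^\ast]+[\mu,\bar\pa^\ast]$ and $\De_{\bar\pa+\mu}=\De_{\pa+\bar\mu}=\De_\pa+\De_{\bar\mu}+[\pa,\bar\mu^\ast]+[\bar\mu,\pa^\ast]$ (using (\ref{E11})), and applying Proposition \ref{P3}(2) to collapse the residual cross brackets, produces
$$2\De_{\bar\pa+\mu}=(\De_\pa+\De_{\bar\pa})+(\De_\mu+\De_{\bar\mu})+2\bigl([\bar\pa,\mu^\ast]+[\mu,\bar\pa^\ast]\bigr).$$
Testing this identity against $\gamma^k$ and applying AM--GM once more, with a small parameter $\varepsilon'$, to the two remaining brackets yields
$$(1-2\varepsilon')\langle(\De_\pa+\De_{\bar\pa})\gamma^k,\gamma^k\rangle_{L^2}\leq 2\langle\De_{\bar\pa+\mu}\gamma^k,\gamma^k\rangle_{L^2}+\bigl(\tfrac{2}{\varepsilon'}-1\bigr)\langle(\De_\mu+\De_{\bar\mu})\gamma^k,\gamma^k\rangle_{L^2}.$$
Substituting this back into the lower bound for $\langle\De_d\gamma^k,\gamma^k\rangle_{L^2}$, invoking the defining inequality $\langle\De_{\bar\pa+\mu}\gamma^k,\gamma^k\rangle_{L^2}\geq c\langle(\De_\mu+\De_{\bar\mu})\gamma^k,\gamma^k\rangle_{L^2}$ of $\widetilde{\mathcal{M}}(k,c)$, and finally freezing $\varepsilon,\varepsilon'$ at small fixed values independent of $c$ (for instance $\varepsilon=\varepsilon'=\tfrac{1}{10}$), gives (\ref{E12}) with explicit uniform constants $c_5,c_6>0$.

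The companion inequality (\ref{E15}) is immediate from (\ref{E12}): by Theorem \ref{T7} we have $\mathcal{H}^k_{\bar\pa+\mu}(X)=\mathcal{H}^k_d(X)\cap\mathcal{H}^k_{d^\La}(X)\subset\mathcal{H}^k_d(X)$, so $(\mathcal{H}^k_d(X))^\perp\subset(\mathcal{H}^k_{\bar\pa+\mu}(X))^\perp$, and every $\de^k\in(\mathcal{H}^k_d(X))^\perp$ automatically satisfies the hypothesis of (\ref{E12}). The main delicate point I anticipate is the second step: since the hypothesis of $\widetilde{\mathcal{M}}(k,c)$ controls $\De_{\bar\pa+\mu}$ but not the ``holomorphic'' part $\De_\pa+\De_{\bar\pa}$, one has to verify carefully that the two AM--GM parameters can be chosen so that the coefficient of $\langle\De_{\bar\pa+\mu}\gamma^k,\gamma^k\rangle_{L^2}$ remains strictly positive in the final combined estimate, ensuring that the $c$-dependence survives in the desired form $c_6(c-c_5)$.
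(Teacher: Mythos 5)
Your proposal is correct, and it rests on the same skeleton as the paper's proof: the identity $\De_d=2\De_{\bar\pa+\mu}+2\bigl([\bar\mu^\ast,\bar\pa]+[\mu,\pa^\ast]+[\mu^\ast,\pa]+[\bar\mu,\bar\pa^\ast]\bigr)$ (the paper writes the leading term as $\De_{\bar\pa+\mu}+\De_{\pa+\bar\mu}$, which is the same thing by (\ref{E11})), an AM--GM absorption of the four residual brackets, and the defining inequality of $\widetilde{\mathcal{M}}(k,\tilde c)$; the passage to (\ref{E15}) via $(\mathcal{H}^k_d)^\perp\subset(\mathcal{H}^k_{\bar\pa+\mu})^\perp$ is also identical. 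The one place you diverge is in how the error brackets are handled: the paper uses $[\mu,\bar\mu^\ast]=[\bar\mu,\mu^\ast]=0$ to rewrite each bracket as a commutator against $\bar\pa+\mu$ or $\pa+\bar\mu$ (e.g.\ $[\bar\mu^\ast,\bar\pa]=[\bar\mu^\ast,\bar\pa+\mu]$), so a single AM--GM pass produces $\varepsilon\langle\De_{\bar\pa+\mu}\gamma^k,\gamma^k\rangle$ directly and the whole estimate closes in one step with the clean constant $(c-8)$ at $\varepsilon=\tfrac14$. You instead leave the brackets in terms of $\pa,\bar\pa$ alone, which forces the intermediate quantity $\langle(\De_\pa+\De_{\bar\pa})\gamma^k,\gamma^k\rangle$ into the estimate and obliges you to prove the auxiliary inequality $(1-2\varepsilon')\langle(\De_\pa+\De_{\bar\pa})\gamma^k,\gamma^k\rangle\leq 2\langle\De_{\bar\pa+\mu}\gamma^k,\gamma^k\rangle+(\tfrac{2}{\varepsilon'}-1)\langle(\De_\mu+\De_{\bar\mu})\gamma^k,\gamma^k\rangle$ before closing the loop. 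Both your identities check out against Propositions \ref{P3} and \ref{P4}, the coefficient of $\langle\De_{\bar\pa+\mu}\gamma^k,\gamma^k\rangle$ does stay strictly positive for your choice $\varepsilon=\varepsilon'=\tfrac{1}{10}$ (one gets roughly $\tfrac32(c-16.5)$), and since the proposition only asserts the existence of uniform constants, the weaker numerical outcome is harmless; the paper's grouping trick is simply the shorter route to the same estimate.
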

\begin{proof}
Following identities in Proposition \ref{P3} and \ref{P4}, the Laplacian $\De_{d}$ can be written as
	\begin{equation*}
	\begin{split}
	\De_{d}&=\De_{\bar{\pa}+\mu}+\De_{\pa+\bar{\mu}}+2\left([\bar{\mu}^{\ast},\bar{\pa}]+[\mu,\pa^{\ast}]+[\mu^{\ast},\pa]+[\bar{\mu},\bar{\pa}^{\ast}]\right)\\
&=\De_{\bar{\pa}+\mu}+\De_{\pa+\bar{\mu}}+2\left([\bar{\mu}^{\ast},\bar{\pa}+\mu]+[\mu,\pa^{\ast}+\bar{\mu}^{\ast}]+[\mu^{\ast},\pa+\bar{\mu}]+[\bar{\mu},\bar{\pa}^{\ast}+\mu^{\ast}]\right),\\
	\end{split}
	\end{equation*}
where we use the fact $[\mu,\bar{\mu}^{\ast}]=[\bar{\mu},\mu^{\ast}]=0$. The interior product of $\De_{d}\gamma^{k}$ and $\gamma^{k}$ is 
	\begin{equation*}
	\begin{split}
	\langle\De_{d}\gamma^{k},\gamma^{k}\rangle_{L^{2}}=&\langle(\De_{\bar{\pa}+\mu}+\De_{\pa+\bar{\mu}})\gamma^{k},\gamma^{k}\rangle_{L^{2}}\\
	&+2\langle[\bar{\mu}^{\ast},\bar{\pa}+\mu]\gamma^{k},\gamma^{k}\rangle_{L^{2}}+2\langle[\mu,\pa^{\ast}+\bar{\mu}^{\ast}]\gamma^{k},\gamma^{k}\rangle_{L^{2}}\\
	&+2\langle[\mu^{\ast},\pa+\bar{\mu}]\gamma^{k},\gamma^{k}\rangle_{L^{2}}+2\langle[\bar{\mu},\bar{\pa}^{\ast}+\mu^{\ast}]\gamma^{k},\gamma^{k}\rangle_{L^{2}}.\\
	\end{split}
	\end{equation*}
 From the AM-GM Inequality, we conclude that
	\begin{equation*}
	\begin{split}
	2|\langle[\bar{\mu}^{\ast},\bar{\pa}+\mu]\gamma^{k},\gamma^{k}\rangle_{L^{2}}|
	&=2|\langle(\bar{\pa}+\mu)\gamma^{k},\bar{\mu}\gamma^{k}\rangle_{L^{2}}+\langle\bar{\mu}^{\ast}\gamma^{k},(\bar{\pa}^{\ast}+\mu^{\ast})\gamma^{k}\rangle_{L^{2}}|\\
	&\leq2\|(\bar{\pa}+\mu)\gamma^{k}\|\cdot\|\mu\gamma^{k}\|+2\|\bar{\mu}^{\ast}\gamma^{k}\|\cdot\|(\bar{\pa}^{\ast}+\mu^{\ast})\gamma^{k}\|\\
	&\leq\varepsilon\langle\De_{\bar{\pa}+\mu}\gamma^{k},\gamma^{k}\rangle_{L^{2}}+\frac{1}{\varepsilon}\langle\De_{\bar{\mu}}\gamma^{k},\gamma^{k}\rangle_{L^{2}}\\
	\end{split}
	\end{equation*}
	where $\varepsilon$ is a positive constant.  Combining the above inequalities, we get
	\begin{equation*}
	\begin{split}
	\langle\De_{d}\gamma^{k},\gamma^{k}\rangle_{L^{2}}
	\geq&\langle(\De_{\bar{\pa}+\mu}+\De_{\pa+\bar{\mu}})\gamma^{k},\gamma^{k}\rangle_{L^{2}}\\
	&-2\varepsilon\langle(\De_{\bar{\pa}+\mu}+\De_{\pa+\bar{\mu}})\gamma^{k},\gamma^{k}\rangle_{L^{2}}-\frac{2}{\varepsilon}\langle(\De_{\mu}+\De_{\bar{\mu}})\gamma^{k},\gamma^{k}\rangle_{L^{2}}\\
	=&(1-2\varepsilon)\langle(\De_{\bar{\pa}+\mu}+\De_{\pa+\bar{\mu}})\gamma^{k},\gamma^{k}\rangle_{L^{2}}-\frac{2}{\varepsilon}\langle(\De_{\mu}+\De_{\bar{\mu}})\gamma^{k},\gamma^{k}\rangle_{L^{2}}\\
	\geq&((2-4\varepsilon)c-\frac{2}{\varepsilon})\langle(\De_{\mu}+\De_{\bar{\mu}})\gamma^{k},\gamma^{k}\rangle_{L^{2}}.
	\end{split}
	\end{equation*}
	Let $\varepsilon=\frac{1}{4}$, then 
	\begin{equation}\label{E13}
	\langle\De_{d}\gamma^{k},\gamma^{k}\rangle_{L^{2}}
\geq(c-8)\langle(\De_{\mu}+\De_{\bar{\mu}})\gamma^{k},\gamma^{k}\rangle_{L^{2}}.
\end{equation}
Noting that $(\mathcal{H}^{k}_{d}(X))^{\perp}\subset(\mathcal{H}^{k}_{d}(X)\cap\mathcal{H}^{k}_{d^{\La}}(X))^{\perp}$. Hence \ref{E15} follows from (\ref{E12})
\end{proof}
\begin{proof}[\textbf{Proof of Theorem \ref{T8}}]
Let $\a$ be a $\De_{d}$-harmonic $k$-form i.e., $\a\in \mathcal{H}^{k}_{d}$. From Proposition \ref{P10}, we have the following decompositions
$$\a=\tilde{\a}_{h}+\b,$$
where $\tilde{\a}_{h}\in\mathcal{H}^{k}_{d}\cap\mathcal{H}^{k}_{d^{\La}}$ and $\b:=(\mathcal{H}^{k}_{d}\cap\mathcal{H}^{k}_{d^{\La}})^{\perp}$.
Noting that $\De_{d}\b=0$, by Proposition \ref{P11}, we  obtain
$$(\De_{\bar{\mu}}+\De_{\mu})\b=0.$$
Then by Lemma \ref{L4}, $\b$ is also in $\oplus_{p+q=k}\mathcal{H}^{p,q}_{d}\subset\mathcal{H}^{k}_{d}\cap\mathcal{H}^{k}_{d^{\La}}$. Hence, $\b=0$. It implies that $\a=\tilde{\a}_{h}$ i.e., $\mathcal{H}^{k}_{d}\subset\mathcal{H}^{k}_{d}\cap\mathcal{H}^{k}_{d^{\La}}$. Therefore, $\mathcal{H}^{k}_{d}=\mathcal{H}^{k}_{d^{\La}}$.
\end{proof}
\section{Remarks and examples}
\subsection{Almost K\"{a}hler manifold in $\widetilde{\mathcal{M}}(k,\tilde{c})$ }
 Recall that  $\widetilde{\mathcal{M}}(k,\tilde{c})$ denotes the family of closed almost K\"{a}hler manifolds with 
$$(\De_{\bar{\pa}+\mu}\a,\a)_{L^{2}}\geq \tilde{c}\langle(\De_{\mu}+\De_{\bar{\mu}})\a,\a\rangle_{L^{2}}$$
for any $\a\in(\mathcal{H}^{k}_{\bar{\pa}+\mu}(X))^{\bot}$. 
\begin{lemma}
(cf. \cite[Lemma 3.8 and Corollary 3.9]{dT})\label{L1}
Let $(X,J,\w)$ be a closed $2n$-dimensional almost K\"{a}hler manifold. If $\a\in\mathcal{H}^{k}_{d}(X)$ is a harmonic $k$-form with $0<k\leq n$, then 
\begin{equation}\label{E16}
\|d^{\La}\a\|^{2}+\|d^{\La\ast}\a\|^{2}=4\mathfrak{Re}(J^{-1}\langle\mu+\bar{\mu})J\a,d^{\La\ast}\a\rangle_{L^{2}}+4\mathfrak{Re}(J^{-1}\langle\mu^{\ast}+\bar{\mu}^{\ast})J\a,d^{\La}\a\rangle_{L^{2}}.
\end{equation}
In particular, for $k=1$, we have
\begin{equation}\label{E17}
\|d^{\La\ast}\a\|^{2}=8(\|\mu\a^{0,1}\|^{2}+\|\bar{\mu}\a^{1,0}\|^{2}).
\end{equation}
\end{lemma}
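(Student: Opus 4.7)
The plan is to combine the explicit operator identity $d^{\La\ast} = J^{-1}dJ = i(\mu + \bar\pa - \pa - \bar\mu)$, obtained by direct computation using $J|_{\Om^{p,q}} = i^{p-q}\cdot\mathrm{id}$, with the harmonicity of $\a$; taking adjoints gives $d^{\La} = i(\pa^{*} + \bar\mu^{*} - \bar\pa^{*} - \mu^{*})$. Both operators split naturally as
$$d^{\La\ast} = i(\bar\pa - \pa) + J^{-1}(\mu + \bar\mu)J, \qquad d^{\La} = i(\pa^{*} - \bar\pa^{*}) + J^{-1}(\mu^{*} + \bar\mu^{*})J,$$
since one verifies $J^{-1}(\mu + \bar\mu)J = i(\mu - \bar\mu)$ and its adjoint analog. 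On $\a \in \mathcal{H}^{k}_{d}$, adding $\pm i d\a = 0$ and $\pm i d^{*}\a = 0$ to the explicit formulas simplifies them to
$$d^{\La\ast}\a = 2i(\mu + \bar\pa)\a = -2i(\pa + \bar\mu)\a, \qquad d^{\La}\a = -2i(\mu^{*} + \bar\pa^{*})\a = 2i(\pa^{*} + \bar\mu^{*})\a.$$

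To prove the general identity, I would expand $\|d^{\La\ast}\a\|^{2} = \langle d^{\La\ast}\a, d^{\La\ast}\a\rangle$ by substituting the K\"ahler/Nijenhuis splitting into only one argument, and do the same for $\|d^{\La}\a\|^{2}$. Two of the four resulting real-part pairings match the Nijenhuis pairings on the RHS exactly. The two remaining ``K\"ahler-part'' pairings $\mathrm{Re}\langle i(\bar\pa - \pa)\a, d^{\La\ast}\a\rangle$ and $\mathrm{Re}\langle i(\pa^{*} - \bar\pa^{*})\a, d^{\La}\a\rangle$ are then rewritten by integration by parts, and the almost K\"ahler commutation identities of Propositions~\ref{P2}--\ref{P4} combined with both halves of harmonicity reduce them to three times the corresponding Nijenhuis pairings, delivering the factor $4$ on the RHS. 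The main obstacle is exactly this K\"ahler-part cancellation: because $\pa^{2}$ and $\bar\pa^{2}$ are no longer zero in the almost K\"ahler setting (the $d^{2} = 0$ expansion yields $\pa^{2} + \mu\bar\pa + \bar\pa\mu = 0$ and its variants), integration by parts generates mixed $(\mu, \bar\mu)$-corrections which must be absorbed symmetrically between the $d^{\La}\a$ and $d^{\La\ast}\a$ halves so that only Nijenhuis-type contributions survive.

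For $k = 1$ the identity $\La\a = 0$ kills $d^{\La}\a$, so only the first Nijenhuis pairing on the RHS survives. Writing $\a = \a^{1,0} + \a^{0,1}$, bi-degree forces $\mu\a = \mu\a^{0,1}$ in type $(2,0)$ and $\bar\mu\a = \bar\mu\a^{1,0}$ in type $(0,2)$. Computing $d^{\La\ast}\a$ component by component via the relations $\pa\a^{1,0} = -\mu\a^{0,1}$, $\pa\a^{0,1} = -\bar\pa\a^{1,0}$, and $\bar\pa\a^{0,1} = -\bar\mu\a^{1,0}$ extracted from $d\a = 0$ yields $d^{\La\ast}\a = 2i\mu\a^{0,1} + 2i\bar\pa\a^{1,0} - 2i\bar\mu\a^{1,0}$. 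Pairing this with $J^{-1}(\mu + \bar\mu)J\a = i\mu\a^{0,1} - i\bar\mu\a^{1,0}$ under bi-degree orthogonality drops the $(1,1)$-contribution and delivers exactly $8(\|\mu\a^{0,1}\|^{2} + \|\bar\mu\a^{1,0}\|^{2})$. A supplementary identity $\|\bar\pa\a^{1,0}\|^{2} = \|\mu\a^{0,1}\|^{2} + \|\bar\mu\a^{1,0}\|^{2}$, obtained by pairing $\De_{\bar\pa} + \De_{\mu} = \De_{\pa} + \De_{\bar\mu}$ (Proposition~\ref{P4}(2)) with $\a^{1,0}$ and using that $\pa^{*}\a^{1,0} = i\La\bar\pa\a^{1,0} = 0$ (because $d^{*}\a = 0$ rewritten via $\pa^{*} = i[\La, \bar\pa]$ forces $\bar\pa\a^{1,0}$ to be primitive), independently confirms that the direct formula $\|d^{\La\ast}\a\|^{2} = 4\|(\mu + \bar\pa)\a\|^{2}$ reduces to $8(\|\mu\a^{0,1}\|^{2} + \|\bar\mu\a^{1,0}\|^{2})$, as required.
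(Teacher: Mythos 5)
The paper itself gives no proof of this lemma: it is quoted (with some misplaced parentheses) from de Bartolomeis--Tomassini \cite[Lemma 3.8 and Corollary 3.9]{dT}, so your argument has to stand on its own. Your preparatory identities are correct and are the standard route: since $J$ acts on $\Om^{p,q}$ by $i^{p-q}$, one gets $d^{\La\ast}=J^{-1}dJ=i(\mu+\bar{\pa}-\pa-\bar{\mu})$, hence on $\a\in\mathcal{H}^{k}_{d}$ the formulas $d^{\La\ast}\a=2i(\mu+\bar{\pa})\a=-2i(\pa+\bar{\mu})\a$ and $d^{\La}\a=2i(\pa^{\ast}+\bar{\mu}^{\ast})\a=-2i(\bar{\pa}^{\ast}+\mu^{\ast})\a$. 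Your case $k=1$ is complete and correct: $d^{\La}\a=-\La d\a=0$, the bidegree bookkeeping is right, and the auxiliary identity $\|\bar{\pa}\a^{1,0}\|^{2}=\|\mu\a^{0,1}\|^{2}+\|\bar{\mu}\a^{1,0}\|^{2}$ does follow from $\De_{\bar{\pa}}+\De_{\mu}=\De_{\pa}+\De_{\bar{\mu}}$ once $\pa^{\ast}\a^{1,0}=0$ is checked, which your argument via $\pa^{\ast}=i[\La,\bar{\pa}]$ and the $(0,0)$-component of $d^{\ast}\a=0$ delivers; so (\ref{E17}) is actually proved, independently of (\ref{E16}).

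The gap is in the general identity (\ref{E16}). After the splittings $d^{\La\ast}=i(\bar{\pa}-\pa)+J^{-1}(\mu+\bar{\mu})J$ and $d^{\La}=i(\pa^{\ast}-\bar{\pa}^{\ast})+J^{-1}(\mu^{\ast}+\bar{\mu}^{\ast})J$, the entire content of the lemma is the claim that
\begin{equation*}
\mathfrak{Re}\langle i(\bar{\pa}-\pa)\a,d^{\La\ast}\a\rangle_{L^{2}}+\mathfrak{Re}\langle i(\pa^{\ast}-\bar{\pa}^{\ast})\a,d^{\La}\a\rangle_{L^{2}}
\end{equation*}
equals three times the sum of the two Nijenhuis pairings --- this is precisely where the factor $4$ comes from. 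You assert that integration by parts and Propositions \ref{P2}--\ref{P4} ``reduce them to three times the corresponding Nijenhuis pairings,'' but you never carry out this computation, and you yourself label it ``the main obstacle.'' Expanding these pairings with the formulas above produces $\|\pa\a\|^{2}+\|\bar{\pa}\a\|^{2}+\|\pa^{\ast}\a\|^{2}+\|\bar{\pa}^{\ast}\a\|^{2}$ plus the cross terms $\mathfrak{Re}\langle\mu\a,\bar{\pa}\a\rangle$, $\mathfrak{Re}\langle\bar{\mu}\a,\pa\a\rangle$ and their adjoint analogues, while the available relations $\langle\De_{d}\a,\a\rangle=0$ and $\|d\a\|^{2}=\|d^{\ast}\a\|^{2}=0$ bring in additional cross terms such as $\mathfrak{Re}\langle\mu\a,\pa\a\rangle$, $\mathfrak{Re}\langle\mu\a,\bar{\mu}\a\rangle$ and $\mathfrak{Re}\langle\pa\a,\bar{\pa}\a\rangle$ that do not appear in the target identity; note also that the two K\"ahler-part pairings cannot each individually equal three times its Nijenhuis partner, only their sum can. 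Showing that all of these terms recombine into exactly the right-hand side of (\ref{E16}) is the actual proof of the lemma, and it is missing from your proposal; until that bookkeeping is supplied (or the identity is simply cited from \cite{dT}, as the paper does), (\ref{E16}) is not established.
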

Based on \eqref{E16}, we can prove Theorem \ref{T8} in a different papproach, which also provides  a precise description on the constant $\tilde{c}$.
\begin{theorem}\label{T5}
Let $(X,J,\w)$ be a closed $2n$-dimensional almost K\"{a}hler manifold, let $0<k\leq n$. There exists a positive constant $\tilde{c}$ such that if $X\in\widetilde{\mathcal{M}}(k,\tilde{c})$, then
$$\mathcal{H}^{k}_{d}(X)=\mathcal{H}^{k}_{d^{\La}}(X),$$
where the constant $\tilde{c}$ satisfies $\tilde{c}>2$ when $k=1$ and $\tilde{c}>4$ when $k\geq2$.
\end{theorem}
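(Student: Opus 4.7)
The plan is to refine the argument of Theorem \ref{T8} by extracting a sharper estimate directly from Lemma \ref{L1} instead of relying on the relatively crude Proposition \ref{P11}. Given $\a\in\H^{k}_{d}(X)$, I first use the orthogonal decomposition from Proposition \ref{P10} together with Theorem \ref{T7} to write $\a=\a_{h}+\b$, where $\a_{h}\in\H^{k}_{\bar{\pa}+\mu}=\H^{k}_{d}\cap\H^{k}_{d^{\La}}$ and $\b\in(\H^{k}_{\bar{\pa}+\mu})^{\perp}$. Since $\a_{h}$ is $d$-harmonic, so is $\b=\a-\a_{h}$, and it suffices to prove $\b=0$: indeed, this gives $\a=\a_{h}\in\H^{k}_{d^{\La}}$, hence $\H^{k}_{d}\subset\H^{k}_{d^{\La}}$, and equality of dimensions (via the symplectic Hodge star) forces $\H^{k}_{d}=\H^{k}_{d^{\La}}$.

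The heart of the argument is applying \eqref{E16} to $\b$. The crucial algebraic observation is that $J$ acts on $\Om^{p,q}$ by multiplication by $(\sqrt{-1})^{p-q}$, so tracking the bidegree shifts of $\mu,\bar{\mu}$ yields $J^{-1}(\mu+\bar{\mu})J=\sqrt{-1}(\mu-\bar{\mu})$ and $J^{-1}(\mu^{\ast}+\bar{\mu}^{\ast})J=-\sqrt{-1}(\mu^{\ast}-\bar{\mu}^{\ast})$. Combined with the identities $[\mu,\bar{\mu}^{\ast}]=[\bar{\mu},\mu^{\ast}]=0$ from Proposition \ref{P3}(1), a short expansion of $\De_{\mu-\bar{\mu}}$ produces the sharp identity
$$\|J^{-1}(\mu+\bar{\mu})J\b\|^{2}+\|J^{-1}(\mu^{\ast}+\bar{\mu}^{\ast})J\b\|^{2}=\langle(\De_{\mu}+\De_{\bar{\mu}})\b,\b\rangle_{L^{2}}.$$
Inserting the AM-GM bound $4|\mathfrak{Re}\langle X,Y\rangle|\le\tfrac{2}{\varepsilon}\|X\|^{2}+2\varepsilon\|Y\|^{2}$ into both summands of \eqref{E16}, using $\De_{d}\b=0$ and \eqref{E11} to rewrite $\|d^{\La}\b\|^{2}+\|d^{\La\ast}\b\|^{2}=4\langle\De_{\bar{\pa}+\mu}\b,\b\rangle_{L^{2}}$, and invoking the hypothesis $X\in\widetilde{\M}(k,\tilde{c})$ reduces the chain of inequalities to $4(1-2\varepsilon)\tilde{c}\le\tfrac{2}{\varepsilon}$ whenever $\langle(\De_{\mu}+\De_{\bar{\mu}})\b,\b\rangle_{L^{2}}>0$. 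Optimising at $\varepsilon=1/4$ gives the sharp threshold $\tilde{c}\le 4$, so for $\tilde{c}>4$ the only possibility is $\mu\b=\mu^{\ast}\b=\bar{\mu}\b=\bar{\mu}^{\ast}\b=0$; Lemma \ref{L4} then places $\b\in\bigoplus_{p+q=k}\H^{p,q}_{d}\subset\H^{k}_{\bar{\pa}+\mu}$, and orthogonality forces $\b=0$.

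For $k=1$ the threshold improves to $\tilde{c}>2$ by using \eqref{E17} in place of \eqref{E16}. A $d$-harmonic $1$-form automatically satisfies $d^{\La}\b=-\La d\b=0$ since $\La$ annihilates $1$-forms, and the bidegree obstructions $\mu\b^{1,0}=\bar{\mu}\b^{0,1}=0$ reduce \eqref{E17} to $\|d^{\La\ast}\b\|^{2}=8(\|\mu\b\|^{2}+\|\bar{\mu}\b\|^{2})\le 8\langle(\De_{\mu}+\De_{\bar{\mu}})\b,\b\rangle_{L^{2}}$; combined with $\|d^{\La\ast}\b\|^{2}=4\langle\De_{\bar{\pa}+\mu}\b,\b\rangle_{L^{2}}$ this immediately yields $\tilde{c}\le 2$, after which the argument is identical. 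The main obstacle is locating the sharp constant in the AM-GM step: the naive bound $\|(\mu+\bar{\mu})\b\|^{2}\le 2(\|\mu\b\|^{2}+\|\bar{\mu}\b\|^{2})$ would only yield $\tilde{c}>8$, matching Proposition \ref{P11}, so the entire factor-of-two improvement to $\tilde{c}>4$ rests on the non-trivial cancellation $[\mu,\bar{\mu}^{\ast}]=[\bar{\mu},\mu^{\ast}]=0$ of the mixed commutators.
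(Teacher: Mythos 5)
Your proof is correct and follows essentially the same route as the paper's: both rest on Lemma \ref{L1} (identities \eqref{E16}--\eqref{E17}), the relation $4\De_{\bar{\pa}+\mu}=\De_{d}+\De_{d^{\La}}$ from \eqref{E11}, and the cancellation $\De_{\mu+\bar{\mu}}=\De_{\mu}+\De_{\bar{\mu}}$ coming from $[\mu,\bar{\mu}^{\ast}]=[\bar{\mu},\mu^{\ast}]=0$, arriving at the same thresholds $\tilde{c}>4$ (resp.\ $\tilde{c}>2$ for $k=1$). The only differences are presentational: you argue via the orthogonal decomposition and a parametrized AM-GM optimized at $\varepsilon=1/4$, whereas the paper argues by contradiction and applies Cauchy--Schwarz directly.
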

\begin{proof}
If not,  we can obtain that $\mathcal{H}^{k}_{d}(X)\cap \mathcal{H}^{k}_{d^{\La}}(X)\subsetneqq \mathcal{H}^{k}_{d}(X)$. Since
$$\Om^{k}(X)=(\mathcal{H}^{k}_{d}(X)\cap\mathcal{H}^{k}_{d^{\La}}(X))\oplus(\mathcal{H}^{k}_{d}(X)\cap\mathcal{H}^{k}_{d^{\La}}(X))^{\perp}=\mathcal{H}^{k}_{d}(X)\oplus(\mathcal{H}^{k}_{d}(X))^{\perp},$$
there exists a $k$-form $\a$ such that $\a\in\mathcal{H}^{k}_{d}$, $\a\in(\mathcal{H}^{k}_{d}(X)\cap\mathcal{H}^{k}_{d^{\La}}(X))^{\perp}$ and $\a\notin\mathcal{H}^{k}_{d^{\La}}$. We denote $\a_{J}=J\a$. For $k\geq2$,  the identity (\ref{E16}) in Lemma \ref{L1} gives
\begin{equation*}
\begin{split}
\|d\a_{J}\|^{2}+\|d^{\ast}\a_{J}\|^{2}&=\|d^{\La}\a\|^{2}+\|d^{\La\ast}\a\|^{2}\\
&=
4\mathfrak{Re}\langle J^{-1}(\mu+\bar{\mu})\a_{J},d^{\La\ast}\a\rangle_{L^2}+4\mathfrak{Re}\langle J^{-1}(\mu^{\ast}+\bar{\mu}^{\ast})\a_{J},d^{\La}\a\rangle_{L^2}\\
&\leq 4\|(\mu+\bar{\mu})J\a\|\|d^{\La\ast}\a\|+4\|(\mu^{\ast}+\bar{\mu}^{\ast})J\a\|\|d^{\La}\a\|\\
&\leq 4\langle\De_{\mu+\bar{\mu}}\a_{J},\a_{J}\rangle_{L^2}^{\frac{1}{2}}(\|d^{\La\ast}\a||^{2}+\|d^{\La}\a\|^{2})^{\frac{1}{2}}.\\
\end{split}
\end{equation*}
where the last inequality follows from the Cauchy-Schwarz inequality. 
Hence 
\begin{equation*}
4\tilde{c}\langle(\De_{\mu}+\De_{\bar{\mu}})\a_{J},\a_{J}\rangle\leq \|d\a_{J}\|^{2}+\|d^{\ast}\a_{J}\|^{2}\leq 16\langle(\De_{\mu}+\De_{\bar{\mu}})\a_{J},\a_{J}\rangle.
\end{equation*}
Therefore, $\De_{\mu}\a_{J}=\De_{\bar{\mu}}\a_{J}=0$ and $d^{\La}\a=d^{\La\ast}\a=0$. This implies that $\a\in\mathcal{H}^{k}_{d^{\La}}$ contradicts our previous assumption on $\a$.  When $k=1$,  repeating the preceding argument combined with identity (\ref{E17}) yields the conclusion $\mathcal{H}^{1}_{d}(X)=\mathcal{H}^{1}_{d^{\La}}(X)$ with $\tilde{c}>2$.
\end{proof}
\begin{lemma}\label{L8}
	Let $(X,J,\w)$ be a closed almost K\"{a}hler manifold and $\a$ be a $1$-form. Then $$\mathcal{H}^{1}_{d}(X)\cap\mathcal{H}^{1}_{d^{\La}}(X)= \mathcal{H}^{1}_{\pa}(X)\cap\mathcal{H}^{1}_{\bar{\pa}}(X)\cap\mathcal{H}^{1}_{\mu}(X)\cap\mathcal{H}^{1}_{\bar{\mu}}(X).$$
\end{lemma}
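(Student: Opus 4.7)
The plan is a direct computation combining bidegree considerations with the almost K\"{a}hler identities in Proposition \ref{P2}. The crucial observation is that Proposition \ref{P2} applied to $d^{\La} = [d,\La]$ and $d^{\La\ast} = [L,d^{\ast}]$ yields the closed-form expressions
$$d^{\La} = \sqrt{-1}(\pa^{\ast} - \bar\pa^{\ast} + \bar\mu^{\ast} - \mu^{\ast}), \qquad d^{\La\ast} = -\sqrt{-1}(\pa - \bar\pa + \bar\mu - \mu),$$
so $d^{\La\ast}/(-\sqrt{-1})$ agrees with $d$ except for reversed signs on the $\bar\pa$ and $\mu$ parts, and analogously $d^{\La}/\sqrt{-1}$ differs from $d^{\ast}$ by reversed signs on $\bar\pa^{\ast}$ and $\mu^{\ast}$. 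Consequently, the inclusion $\mathcal{H}^{1}_{\pa} \cap \mathcal{H}^{1}_{\bar\pa} \cap \mathcal{H}^{1}_{\mu} \cap \mathcal{H}^{1}_{\bar\mu} \subseteq \mathcal{H}^{1}_{d} \cap \mathcal{H}^{1}_{d^{\La}}$ is immediate: if all eight operators $\pa,\bar\pa,\mu,\bar\mu$ and their adjoints annihilate $\a$, then $d\a$, $d^{\ast}\a$, $d^{\La}\a$ and $d^{\La\ast}\a$ all vanish termwise.

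For the reverse inclusion, I would decompose $\a = \a^{1,0} + \a^{0,1}$. Because $\mu$ has bidegree $(2,-1)$ and $\bar\mu$ has bidegree $(-1,2)$, many operator-components are forced to vanish for bidegree reasons alone; the only possibly nontrivial unknowns are the six ``non-scalar'' quantities $\pa\a^{1,0}$, $\mu\a^{0,1}$, $\bar\pa\a^{1,0}$, $\pa\a^{0,1}$, $\bar\mu\a^{1,0}$, $\bar\pa\a^{0,1}$ together with two scalars $\pa^{\ast}\a^{1,0}$, $\bar\pa^{\ast}\a^{0,1}$. Splitting $d\a = 0$ and $d^{\La\ast}\a = 0$ into their $(2,0)$, $(1,1)$, $(0,2)$ components produces two triples of linear equations whose pairwise sums and differences decouple the six non-scalar unknowns, forcing $\pa\a = \bar\pa\a = \mu\a = \bar\mu\a = 0$. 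For the two scalar unknowns, $d^{\ast}\a = 0$ yields $\pa^{\ast}\a^{1,0} + \bar\pa^{\ast}\a^{0,1} = 0$, while $d^{\La}\a = -\La d\a = 0$ is automatic on $1$-forms (since $\La$ vanishes on them) and expands via the formula above to $\pa^{\ast}\a^{1,0} - \bar\pa^{\ast}\a^{0,1} = 0$; adding forces both scalars to vanish. Hence all eight operators annihilate $\a$, placing it in the four-fold intersection.

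The main obstacle is combinatorial rather than analytic: one must carefully enumerate which of the sixteen components $\de\,\a^{p,q}$ and $\de^{\ast}\a^{p,q}$ (for $\de \in \{\pa,\bar\pa,\mu,\bar\mu\}$) are identically zero by bidegree, and then verify that the resulting linear systems for the remaining unknowns have full rank. No additional analytic input beyond the almost K\"{a}hler identities in Proposition \ref{P2} is required; the argument succeeds precisely because $d^{\La\ast}$ reverses exactly two of the four sign patterns in $d$, so summing and differencing the bidegree equations isolates each primitive component.
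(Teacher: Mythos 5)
Your proof is correct. There is nothing in the paper to compare it against: Lemma \ref{L8} is stated without proof (the paper elsewhere defers to \cite[Corollary 4.6]{CW1}, see the proof of Proposition \ref{P13}), so your argument supplies the missing verification. The two expansions $d^{\La}=\sqrt{-1}(\pa^{\ast}-\bar{\pa}^{\ast}+\bar{\mu}^{\ast}-\mu^{\ast})$ and $d^{\La\ast}=\sqrt{-1}(\mu-\pa+\bar{\pa}-\bar{\mu})$ do follow from Proposition \ref{P2}(3)--(4) by summing the commutators over the four components of $d$ and $d^{\ast}$, and your bidegree bookkeeping checks out: writing $\a=\a^{1,0}+\a^{0,1}$, the only components not killed by degree reasons are $\pa\a^{1,0},\mu\a^{0,1}$ in bidegree $(2,0)$, $\bar{\pa}\a^{1,0},\pa\a^{0,1}$ in $(1,1)$, $\bar{\mu}\a^{1,0},\bar{\pa}\a^{0,1}$ in $(0,2)$, and the scalars $\pa^{\ast}\a^{1,0},\bar{\pa}^{\ast}\a^{0,1}$, while $\mu\a^{1,0}$, $\bar{\mu}\a^{0,1}$, $\mu^{\ast}\a$, $\bar{\mu}^{\ast}\a$, $\pa^{\ast}\a^{0,1}$, $\bar{\pa}^{\ast}\a^{1,0}$ all vanish identically. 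In each output bidegree the pair of equations from $d\a=0$ and $d^{\La\ast}\a=0$ carries opposite relative signs on exactly one of the two unknowns, so both unknowns vanish; and the two scalar relations $\pa^{\ast}\a^{1,0}\pm\bar{\pa}^{\ast}\a^{0,1}=0$ coming from $d^{\ast}\a=0$ and from the expansion of $d^{\La}\a=0$ finish the job. One remark: an alternative in-paper route is Theorem \ref{T7}, which gives $\mathcal{H}^{1}_{d}\cap\mathcal{H}^{1}_{d^{\La}}=\ker\De_{\bar{\pa}+\mu}\cap\Om^{1}=\ker\De_{\pa+\bar{\mu}}\cap\Om^{1}$; on $1$-forms the three components of $(\bar{\pa}+\mu)\a$ land in distinct bidegrees and hence vanish separately, and likewise for $(\pa+\bar{\mu})\a$, so intersecting the two kernels yields all eight conditions with slightly less linear algebra. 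Both arguments rest on the same bidegree separation, so your direct computation is a perfectly good proof.
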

Next, we  provide an example to demonstrate that the constants  in  Theorem \ref{T5} is optimal. 
\begin{example}\label{ex54}
This example  appeared in Bartolomeis and Tomassini's article (see \cite[Section 5]{dT}). Let $X=G/\Gamma$, where
\[
G=\left\{\begin{pmatrix}
1 & x & z & 0 & 0\\
0 & 1 & y & 0 & 0\\
0 & 0 & 1 & 0 & 0\\
0 & 0 & 0 & 1 & t\\
0 & 0 & 0 & 0 & 1\\
\end{pmatrix}|
x,y,z,t\in\mathbb{R}
\right\}
\]
and $\Ga$ is the discrete closed subgroup of $G$ of the matrices with integral entries, $X$ is a compact parallelizable $4$-manifold, $\Ga/[\Ga,\Ga]=\mathbb{Z}^{3}$, and so $b_{1}(X)=b_{3}(X)=3$. Consider the following four $G$-invariant $1$-forms on $\mathbb{R}^{4}$, viewed as elements in $\La^{1}(X)$:
$$\a_{1}=dx,\quad \a_{2}=dz-xdy,\quad \a_{3}=dt,\quad \a_{4}=dy,$$
where the dual elements
$$\xi_{1}=\frac{\pa}{\pa x},\quad \xi_{2}=\frac{\pa}{\pa z},\quad \xi_{3}=\frac{\pa}{\pa t},\quad \xi_{4}=\frac{\pa}{\pa y}+x\frac{\pa}{\pa z}.$$
Then 
$$d\a_{1}=d\a_{3}=d\a_{4}=0,\quad d\a_{2}=-\a_{2}\wedge\a_{4}$$
and $[\xi_{1},\xi_{4}]=\xi_{2}$, and other brackets being zero. Let $\w=\a_{3}\wedge\a_{1}+\a_{4}\wedge\a_{2}$; then $\w$ is closed and nondegenerate, and so it determines a symplectic structure on $X$; $J$ defined as $J\xi_{1}=\xi_{3}$, $J\xi_{2}=\xi_{4}$ is an almost complex structure calibrated by $\w$, i.e., $(X,J,\w)$ is an almost K\"{a}hler manifold. $\{\a_{1}, \a_{2}, \a_{3}, \a_{4}\}$ and $\{\xi_{1},\xi_{2},\xi_{3},\xi_{4}\}$ are orthonormal frames of $T^{\ast}X$ and $TX$, respectively. Moreover,
\begin{equation*}
N_{J}(\xi_{1},\xi_{2})=-\xi_{4},
\end{equation*}
and
\begin{equation*}
\begin{split}
&(\mu+\bar{\mu})\a_{1}=(\mu+\bar{\mu})\a_{3}=0,\\
&(\mu^{\ast}+\bar{\mu}^{\ast})\a_{1}=(\mu^{\ast}+\bar{\mu}^{\ast})\a_{3}=0,\\
&(\mu+\bar{\mu})\a_{2}=\frac{1}{4}(\a_{2}\wedge\a_{3}-\a_{1}\wedge\a_{4}),\\
&(\mu+\bar{\mu})\a_{4}=\frac{1}{4}(\a_{3}\wedge\a_{4}-\a_{1}\wedge\a_{2}).\\
\end{split}
\end{equation*}
Consequently, $\|\mu\|=\frac{1}{4}$, $\a_{1},\a_{3},\a_{4}$ are linearly independent and harmonic, and so 
$$\mathcal{H}^{1}_{d}(X)={\rm{span}}\{\a_{1},\a_{3},\a_{4} \}.$$
Note also that $\De_{d}\a_{2}=\a_{2}$. Furthermore, as shown in \cite[Lemma A.1]{dT}, the minimal positive eigenvalue for $\De_{d}$ acting on $1$-forms is $1$.
\begin{proposition}
Suppose that  $(X,J,\w)$ is a compact Thurston $4$-manifold as in Example \ref{ex54}. Then $X\in \widetilde{\mathcal{M}}(1,2)$.
\end{proposition}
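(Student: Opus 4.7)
\textit{Proof proposal.} My plan is to combine an explicit finite-dimensional computation on the $4$-dimensional space of left-invariant $1$-forms with a spectral estimate on its $L^{2}$-orthogonal complement, using that every operator in the inequality is left-invariant.

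First I would identify $\mathcal{H}^{1}_{\bar{\pa}+\mu}(X)$ explicitly. By Theorem \ref{T7} it equals $\mathcal{H}^{1}_{d}\cap\mathcal{H}^{1}_{d^{\La}}$, so one must test each generator of $\mathcal{H}^{1}_{d}=\mathrm{span}\{\a_{1},\a_{3},\a_{4}\}$ for $d^{\La}$-harmonicity. Identity (\ref{E17}) gives $\|d^{\La\ast}\a\|^{2}=8\|(\mu+\bar{\mu})\a\|^{2}$ on $\mathcal{H}^{1}_{d}$, and the data $(\mu+\bar{\mu})\a_{1}=(\mu+\bar{\mu})\a_{3}=0$ together with $(\mu+\bar{\mu})\a_{4}\neq 0$ yield $\mathcal{H}^{1}_{\bar{\pa}+\mu}(X)=\mathrm{span}\{\a_{1},\a_{3}\}$.

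Since $\w$, $J$, and the metric are all left-invariant, each of $\De_{d}$, $\De_{d^{\La}}$, $\De_{\bar{\pa}+\mu}$, and $\De_{\mu}+\De_{\bar{\mu}}$ commutes with the unitary left $G$-action on $L^{2}(\Om^{1}(X))$, so the orthogonal splitting
$$L^{2}(\Om^{1}(X))=V_{\mathrm{inv}}\oplus V_{\mathrm{inv}}^{\perp},\qquad V_{\mathrm{inv}}:=\mathrm{span}\{\a_{1},\a_{2},\a_{3},\a_{4}\},$$
is preserved by all of them. Hence the inequality need only be checked on $V_{\mathrm{inv}}\cap(\mathcal{H}^{1}_{\bar{\pa}+\mu})^{\perp}=\mathrm{span}\{\a_{2},\a_{4}\}$ and on $V_{\mathrm{inv}}^{\perp}\subset(\mathcal{H}^{1}_{d})^{\perp}$ separately.

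On $\mathrm{span}\{\a_{2},\a_{4}\}$ I would compute directly. Using $d^{\La}=-\La d$ and $d^{\La\ast}=J^{-1}dJ$ on $1$-forms, together with $d\a_{2}=-\a_{1}\wedge\a_{4}$ (which is orthogonal to $\w$) and $d\a_{4}=0$, one finds $d^{\La}\a_{2}=d^{\La}\a_{4}=0$, $d^{\La\ast}\a_{2}=0$ and $d^{\La\ast}\a_{4}=-\a_{2}\wedge\a_{3}$. Combined with $\De_{d}\a_{2}=\a_{2}$, $\De_{d}\a_{4}=0$, the identity $\De_{\bar{\pa}+\mu}=\tfrac{1}{4}(\De_{d}+\De_{d^{\La}})$, and the orthogonality of $(\mu+\bar{\mu})\a_{2}$ and $(\mu+\bar{\mu})\a_{4}$, this gives for any $\a=a\a_{2}+b\a_{4}$:
$$\langle\De_{\bar{\pa}+\mu}\a,\a\rangle=\tfrac{|a|^{2}+|b|^{2}}{4},\qquad\langle(\De_{\mu}+\De_{\bar{\mu}})\a,\a\rangle=\|(\mu+\bar{\mu})\a\|^{2}=\tfrac{|a|^{2}+|b|^{2}}{8},$$
so the ratio is exactly $2$. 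On $V_{\mathrm{inv}}^{\perp}$ I would invoke the stated fact that the smallest positive eigenvalue of $\De_{d}|_{\Om^{1}}$ equals $1$, yielding $\langle\De_{d}\a,\a\rangle\geq\|\a\|^{2}$, together with the pointwise bound $|(\mu+\bar{\mu})\a|^{2}\leq\tfrac{1}{8}|\a|^{2}$ read directly from the action of $\mu+\bar{\mu}$ on the global orthonormal frame $\{\a_{i}\}$. These combine to
$$\langle\De_{\bar{\pa}+\mu}\a,\a\rangle\geq\tfrac{1}{4}\langle\De_{d}\a,\a\rangle\geq\tfrac{1}{4}\|\a\|^{2}\geq 2\langle(\De_{\mu}+\De_{\bar{\mu}})\a,\a\rangle.$$
Summing the two contributions completes the proof. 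The main technical point is the explicit identification $d^{\La\ast}\a_{4}=-\a_{2}\wedge\a_{3}$ (with $\|d^{\La\ast}\a_{4}\|^{2}=1$, consistent with (\ref{E17})), which is what forces equality on $\mathrm{span}\{\a_{2},\a_{4}\}$ and thus makes $\tilde{c}=2$ sharp, matching the strict inequality $\tilde{c}>2$ required in Theorem \ref{T5} for $k=1$.
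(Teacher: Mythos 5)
Your proof is correct and arrives at the same sharp constant, but it organizes the verification differently from the paper. The paper decomposes $(\mathcal{H}^{1}_{\bar{\pa}+\mu})^{\perp}$ according to membership in $\mathcal{H}^{1}_{d}$ versus $(\mathcal{H}^{1}_{d})^{\perp}$, identifies the extremal directions in each case ($\a_{4}$ and $\a_{2}$ respectively, the latter via the observation that $\De_{d^{\La}}\a=0$ forces $J\a\in\ker\De_{d}$), and then combines the two cases; you instead split $L^{2}(\Om^{1})$ into the $4$-dimensional space of left-invariant forms and its complement. Your invariance argument is a genuine improvement in rigor: since all four operators are self-adjoint and commute with the unitary left $G$-action, the splitting is preserved and the cross terms in both quadratic forms vanish, which is exactly the point that the paper's closing sentence (``combining the preceding arguments'') leaves implicit. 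Both proofs rest on the same inputs -- the identification $\mathcal{H}^{1}_{\bar{\pa}+\mu}=\mathrm{span}\{\a_{1},\a_{3}\}$, the identity $\De_{\bar{\pa}+\mu}=\tfrac{1}{4}(\De_{d}+\De_{d^{\La}})$, the minimal positive eigenvalue $1$ of $\De_{d}$ on $1$-forms, and the explicit action of $\mu+\bar{\mu}$ on the invariant coframe -- and both exhibit equality on $\mathrm{span}\{\a_{2},\a_{4}\}$, showing $\tilde{c}=2$ is sharp. Two harmless remarks: your formula for $d^{\La\ast}\a_{4}$ agrees with a direct computation only up to sign (conventions for $J$ on forms), but only its norm $\|d^{\La\ast}\a_{4}\|^{2}=1$ enters; and your reduction implicitly uses that $\langle(\De_{\mu}+\De_{\bar{\mu}})\a,\a\rangle=\|(\mu+\bar{\mu})\a\|^{2}$ on $1$-forms (because $\mu^{\ast}$ and $\bar{\mu}^{\ast}$ vanish there for degree reasons and the images of $\mu$ and $\bar{\mu}$ have distinct bidegrees), which is worth stating explicitly.
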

\begin{proof}
By Lemma \ref{L4} and  \ref{L8}, one can check that $\a_{1}$ and $\a_{3}$ are in $\mathcal{H}^{1}_{d}(X)\cap\mathcal{H}^{1}_{d^{\La}}(X)$, and so
 $$\mathcal{H}^{1}_{\bar{\pa}+\mu}(X)=\mathcal{H}^{1}_{d}(X)\cap\mathcal{H}^{1}_{d^{\La}}(X)={\rm{span}}\{\a_{1},\a_{3} \}.$$ 
Here we also use Lemma \ref{T7}. To verify $(\De_{\bar{\pa}+\mu}\a,\a)_{L^{2}}\geq \tilde{c}\langle(\De_{\mu}+\De_{\bar{\mu}})\a,\a\rangle_{L^{2}}$ for any $\a\in(\mathcal{H}^{1}_{\bar{\pa}+\mu}(X))^{\bot}$, we now begin a case by case discussion. 
 
 The first case: if $\a\in(\mathcal{H}^{1}_{\bar{\pa}+\mu}(X))^{\bot}\cap(\mathcal{H}^{1}_{d}(X))^{\perp}$, noting that  the minimal positive eigenvalue for $\De_{d}$ acting on $1$-forms is $1$, then we have
$$\langle\De_{\bar{\pa}+\mu}\a,\a\rangle_{L^{2}}=\frac{1}{4}\langle\De_{d}\a,\a\rangle_{L^{2}}+\frac{1}{4}\langle\De_{d^{\La}}\a,\a\rangle_{L^{2}}\geq\frac{1}{4}\|\a\|^{2}+\frac{1}{4}\langle\De_{d^{\La}}\a,\a\rangle_{L^{2}}.$$
When $\De_{d^{\La}}\a\neq0$, it is obvious that
$$\langle\De_{\bar{\pa}+\mu}\a,\a\rangle_{L^{2}}>\frac{1}{4}\|\a\|^{2}.$$
When $\De_{d^{\La}}\a=0$, it follows that $J\a\in\ker\De_{d}={\rm{span}}\{\a_{1},\a_{3},\a_{4}\}$. Then up to a constant, we can get $J\a=\a_{4}$, and so $\a=-\a_{2}$. Therefore, 
$$\langle\De_{\bar{\pa}+\mu}\a,\a\rangle_{L^{2}}=\frac{1}{4}\|\a\|^{2}=2\langle(\De_{\mu}+\De_{\bar{\mu}})\a,\a\rangle_{L^{2}}.$$

The second case: if $\a\in(\mathcal{H}^{1}_{\bar{\pa}+\mu}(X))^{\bot}$ and $\a\in\mathcal{H}^{1}_{d}(X)$, then up to a constant, we have $\a=\a_{4}=J\a_2$. Since 
$$\De_{d^{\La}}\a_{4}=J^{-1}\De_{d}(J\a_{4})=-J^{-1}\De_{d}\a_{2}=-J^{-1}\a_{2}=\a_{4},$$
where we use the fact $J^{2}=(-1)^{k}$ for its action on differential $k$-forms. It follows that
$$\langle\De_{\bar{\pa}+\mu}\a,\a\rangle_{L^{2}}=\frac{1}{4}\|\a\|^{2}=2\langle(\De_{\mu}+\De_{\bar{\mu}})\a,\a\rangle_{L^{2}}.$$
Combining the preceding arguments, it yields that $X\in\widetilde{\mathcal{M}}(1,2)$.
\end{proof}
\end{example}
	\subsection{Almost K\"ahler manifolds in $\mathcal{M}(k,c)$}
In this section, we provide a precise description on the constant $c$, which guarantees that some almost K\"ahler manifold is contained in $\mathcal{M}(k,c)$.
\begin{proposition}\label{P13}
	Let $(X,J,\w)$ be a closed almost K\"ahler manifold. Then there exists a constant $c$ with $c>\frac{1}{2}$ such that $X\in \mathcal{M}(1,c)$.
\end{proposition}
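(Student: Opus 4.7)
The plan is to use the symmetric identity $\De_{\bar\pa}+\De_\mu = \De_\pa+\De_{\bar\mu}$ from Proposition \ref{P4}(2) to write
\begin{equation*}
\langle(\De_{\bar\pa}+\De_\mu)\a,\a\rangle_{L^{2}} = \tfrac{1}{2}\langle(\De_\pa+\De_{\bar\pa})\a,\a\rangle_{L^{2}} + \tfrac{1}{2}\langle(\De_\mu+\De_{\bar\mu})\a,\a\rangle_{L^{2}},
\end{equation*}
so it is enough to produce $\la>0$ with
\begin{equation*}
\langle(\De_\pa+\De_{\bar\pa})\a,\a\rangle_{L^{2}}\geq \la\,\langle(\De_\mu+\De_{\bar\mu})\a,\a\rangle_{L^{2}}
\end{equation*}
for every $\a\in(\mathcal{H}^{1,0}_{d}\oplus\mathcal{H}^{0,1}_{d})^{\perp}\cap\Om^{1}(X,\C)$; then $c=(1+\la)/2$ witnesses $X\in\mathcal{M}(1,c)$ with $c>1/2$. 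The operator $\De_\pa+\De_{\bar\pa}$ is elliptic and self-adjoint, so the inequality will follow from its spectral gap on the orthogonal complement of its kernel, provided that kernel coincides with $\mathcal{H}^{1,0}_{d}\oplus\mathcal{H}^{0,1}_{d}$. The comparison is then completed by the pointwise bound $\langle(\De_\mu+\De_{\bar\mu})\a,\a\rangle_{L^{2}}\leq C_{N}\|\a\|^{2}_{L^{2}}$, which is immediate since $\mu,\bar\mu$ are zeroth-order tensors controlled by the Nijenhuis tensor via Lemma \ref{L7}.

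The central step is therefore the identification $\mathcal{H}^{1}_{\pa}\cap\mathcal{H}^{1}_{\bar\pa}=\mathcal{H}^{1,0}_{d}\oplus\mathcal{H}^{0,1}_{d}$; only the inclusion $\subset$ is non-trivial. Given $\a=\a^{1,0}+\a^{0,1}\in\mathcal{H}^{1}_{\pa}\cap\mathcal{H}^{1}_{\bar\pa}$, projecting $\pa\a=\bar\pa\a=\pa^{*}\a=\bar\pa^{*}\a=0$ onto pure bi-degrees yields $\pa\a^{p,q}=\bar\pa\a^{p,q}=0$ for $(p,q)\in\{(1,0),(0,1)\}$, together with $\pa^{*}\a^{1,0}=0$ and $\bar\pa^{*}\a^{0,1}=0$ (the remaining components vanish automatically for bi-degree reasons). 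On $\a^{1,0}$ the operators $\mu,\mu^{*},\bar\pa^{*},\bar\mu^{*}$ all vanish automatically by bi-degree, so combining with the displayed vanishings one obtains $\De_{\bar\pa}\a^{1,0}=\De_{\pa}\a^{1,0}=\De_{\mu}\a^{1,0}=0$. Evaluating Proposition \ref{P4}(2) pointwise at $\a^{1,0}$ then forces $\De_{\bar\mu}\a^{1,0}=0$, and since $\bar\mu^{*}\a^{1,0}=0$ the pairing gives $\|\bar\mu\a^{1,0}\|^{2}_{L^{2}}=\langle\De_{\bar\mu}\a^{1,0},\a^{1,0}\rangle_{L^{2}}=0$, so $\bar\mu\a^{1,0}=0$. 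All components of $d\a^{1,0}$ and $d^{*}\a^{1,0}$ are now zero, whence $\a^{1,0}\in\mathcal{H}^{1,0}_{d}$; the mirror argument yields $\a^{0,1}\in\mathcal{H}^{0,1}_{d}$.

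The main obstacle is precisely this kernel identification: it relies on the bi-degree collapse of $\mu,\mu^{*},\bar\mu^{*},\bar\pa^{*}$ on $\Om^{1,0}$ (and symmetrically on $\Om^{0,1}$) together with the lucky cancellation supplied by Proposition \ref{P4}(2). Neither feature survives for general $k\geq 2$, which is consistent with the paper's higher-degree statements requiring membership in $\mathcal{M}(k,c)$ as a hypothesis rather than proving it. Once the kernel is pinned down, the rest is standard elliptic theory: letting $\la_{1}>0$ be the smallest positive eigenvalue of $\De_\pa+\De_{\bar\pa}$ on $\Om^{1}$ and $C_{N}$ any pointwise bound on $|\mu|^{2}+|\bar\mu|^{2}$, one may set $\la=\la_{1}/C_{N}$ and $c=1/2+\la_{1}/(2C_{N})>1/2$; the Kähler case $N\equiv 0$ is trivial since then $\De_\mu+\De_{\bar\mu}\equiv 0$ and any constant works.
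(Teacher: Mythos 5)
Your proof is correct, and it rests on the same two pillars as the paper's: the identity $\De_{\bar{\pa}}+\De_{\mu}=\tfrac{1}{2}(\De_{\pa}+\De_{\bar{\pa}})+\tfrac{1}{2}(\De_{\mu}+\De_{\bar{\mu}})$ and the degree-one kernel identification $\ker(\De_{\pa}+\De_{\bar{\pa}})\cap\Om^{1}=\mathcal{H}^{1,0}_{d}\oplus\mathcal{H}^{0,1}_{d}$. The packaging differs. The paper argues by contradiction: it supposes a nonzero $\a\in(\bigoplus_{p+q=1}\mathcal{H}^{p,q}_{d})^{\perp}$ realizes equality with $c=\tfrac{1}{2}$, deduces $\De_{\pa}\a=\De_{\bar{\pa}}\a=0$, and invokes \cite{CW1} (Corollary 4.6 there, i.e.\ exactly the bidegree collapse you verify by hand using the vanishing of $\mu,\mu^{\ast},\bar{\pa}^{\ast},\bar{\mu}^{\ast}$ on $\Om^{1,0}$ together with Proposition \ref{P4}(2)) to force $\a=0$. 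Strictly read, that argument only shows the inequality is strict for each individual $\a$, which does not by itself yield a \emph{uniform} constant $c>\tfrac{1}{2}$; your direct route through the spectral gap $\la_{1}$ of the self-adjoint elliptic operator $\De_{\pa}+\De_{\bar{\pa}}$ on the orthogonal complement of its kernel, combined with the zeroth-order bound $\langle(\De_{\mu}+\De_{\bar{\mu}})\a,\a\rangle_{L^{2}}\leq C_{N}\|\a\|^{2}$, produces the explicit constant $c\geq\tfrac{1}{2}+\la_{1}/(2C_{N})$ and so actually delivers the uniformity the proposition asserts. In short: same key lemma and same reduction, but your quantitative formulation is, if anything, more complete than the paper's; your self-contained verification of the kernel identification is also correct and matches the cited result.
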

\begin{proof}
	As shown in Equation (\ref{E1}),  we know  $c$ is a constant that obeys $c\geq\frac{1}{2}$, so it suffices to   prove that $c\neq\frac{1}{2}$, i.e., we need to show that 
	$$\langle(\De_{\bar{\pa}}+\De_{\mu})\a,\a\rangle_{L^{2}}>\frac{1}{2}\langle (\De_{\mu}+\De_{\bar{\mu}})\a,\a\rangle_{L^{2}}$$
	for any  nonzero $1$-form $\a\in(\bigoplus_{p+q=1}\mathcal{H}_{d}^{p,q})^{\bot}$. If not, there exists a nonzero $1$-form $\a\in(\bigoplus_{p+q=1}\mathcal{H}_{d}^{p,q})^{\bot}$ such that
	$$\langle(\De_{\bar{\pa}}+\De_{\mu})\a,\a\rangle_{L^{2}}=\frac{1}{2}\langle (\De_{\mu}+\De_{\bar{\mu}})\a,\a\rangle_{L^{2}}.$$
	Since $\De_{\bar{\pa}}+\De_{\mu}=\De_{\pa}+\De_{\bar{\mu}}$, we have
	$$\langle(\De_{\bar{\pa}}+\De_{\mu}+\De_{\pa}+\De_{\bar{\mu}})\a,\a \rangle_{L^{2}}=\langle (\De_{\mu}+\De_{\bar{\mu}})\a,\a\rangle_{L^{2}}.$$
	Hence,
	$$\langle(\De_{\bar{\pa}}+\De_{\pa})\a,\a \rangle_{L^{2}}=0,\quad \Longrightarrow\quad \De_{\bar{\pa}}\a=\De_{\pa}\a=0.$$
	We can write the $1$-form $\a$ as  $\a=\a^{0,1}+\a^{1,0}$, where $\a^{p,q}\in\Om^{p,q}$, then
	$$\De_{\bar{\pa}}\a^{0,1}=\De_{\pa}\a^{0,1}=0\quad \text{and}\quad \De_{\bar{\pa}}\a^{1,0}=\De_{\pa}\a^{1,0}=0.$$
	Since $\mathcal{H}_{\bar{\pa}}^{1,0}=\mathcal{H}_{\bar{\pa}}^{1,0}\cap\mathcal{H}_{\mu}^{1,0}=\mathcal{H}_{d}^{1,0}$ and $\mathcal{H}_{\pa}^{0,1}=\mathcal{H}_{\pa}^{0,1}\cap\mathcal{H}_{\bar{\mu}}^{0,1}=\mathcal{H}_{d}^{0,1}$ (see \cite[Corollary 4.6]{CW1} or Lemma \ref{L8}), we have
	$$\a^{0,1}\in\mathcal{H}_{d}^{0,1}\quad \text{and}\quad \a^{1,0}\in\mathcal{H}_{d}^{1,0}.$$
	But we assume that $\a\in(\bigoplus_{p+q=1}\mathcal{H}_{d}^{p,q})^{\bot}$, thus
	$$\a\equiv0.$$
	This contradicts our initial assumption of the 1-form $\a$.
\end{proof}
\begin{proposition}\label{P14}
Let $(X,J,\w)$ be a closed $2n$-dimensional almost K\"{a}hler manifold. Then
\begin{equation*}
(\De_{\bar{\pa}}+\De_{\mu})|_{\Om^{1}(X)}=\frac{1}{4}(\De_{d}+\De_{d^{\La}})|_{\Om^{1}(X)}.
\end{equation*}
\end{proposition}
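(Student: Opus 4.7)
The plan is to reduce the statement to equation (\ref{E11}) by showing that on $1$-forms, the Laplacian $\De_{\bar{\pa}+\mu}$ coincides with $\De_{\bar{\pa}}+\De_{\mu}$. Starting from the decomposition $(\bar{\pa}+\mu)^{\ast}=\bar{\pa}^{\ast}+\mu^{\ast}$, a direct expansion yields
\begin{equation*}
\De_{\bar{\pa}+\mu}=\De_{\bar{\pa}}+\De_{\mu}+[\bar{\pa},\mu^{\ast}]+[\mu,\bar{\pa}^{\ast}].
\end{equation*}
Hence the proposition will follow from \eqref{E11} as soon as I can show that on $\Om^{1}(X)$ the two cross-commutators $[\bar{\pa},\mu^{\ast}]$ and $[\mu,\bar{\pa}^{\ast}]$ vanish.

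To verify this, I would argue purely by bi-degree accounting, using the bi-degrees $|\mu|=(2,-1)$, $|\bar{\pa}|=(0,1)$, $|\mu^{\ast}|=(-2,1)$, $|\bar{\pa}^{\ast}|=(0,-1)$ and the splitting $\Om^{1}(X,\C)=\Om^{1,0}\oplus\Om^{0,1}$. For the first commutator: applying $\mu^{\ast}$ directly to $\Om^{1,0}$ lands in $\Om^{-1,1}=0$ and on $\Om^{0,1}$ lands in $\Om^{-2,2}=0$, so $\mu^{\ast}|_{\Om^{1}}=0$; similarly $\bar{\pa}\a$ lies in $\Om^{1,1}\oplus\Om^{0,2}$, and $\mu^{\ast}$ vanishes on both of these because the target bi-degrees would have negative holomorphic index. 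Therefore $[\bar{\pa},\mu^{\ast}]\a=\bar{\pa}\mu^{\ast}\a+\mu^{\ast}\bar{\pa}\a=0$ for every $\a\in\Om^{1}(X)$. For the second commutator, $\bar{\pa}^{\ast}$ vanishes on $\Om^{1,0}$ and sends $\Om^{0,1}$ into $\Om^{0,0}$; then $\mu$ applied to a function lands in $\Om^{2,-1}=0$, so $\mu\bar{\pa}^{\ast}\a=0$. Likewise $\mu\a$ lies only in $\Om^{2,0}$ (coming from $\Om^{0,1}$), and $\bar{\pa}^{\ast}$ takes that into $\Om^{2,-1}=0$, so $\bar{\pa}^{\ast}\mu\a=0$. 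Thus $[\mu,\bar{\pa}^{\ast}]$ also annihilates $\Om^{1}(X)$.

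Combining these vanishings with \eqref{E11} gives
\begin{equation*}
(\De_{\bar{\pa}}+\De_{\mu})|_{\Om^{1}(X)}=\De_{\bar{\pa}+\mu}|_{\Om^{1}(X)}=\tfrac{1}{4}(\De_{d}+\De_{d^{\La}})|_{\Om^{1}(X)},
\end{equation*}
which is the desired identity. There is no real obstacle: the proof is essentially bookkeeping of bi-degrees, and the only point requiring a little care is to make sure the identity \eqref{E11} is being invoked on the correct degree. The reason the identity fails in general degree — but succeeds here — is precisely that on $1$-forms the operators $\mu^{\ast}$ and (modulo functions) $\bar{\pa}^{\ast}$ have nowhere to go, so the error terms $[\bar{\pa},\mu^{\ast}]+[\mu,\bar{\pa}^{\ast}]$ disappear for dimensional reasons, even when $J$ is not integrable.
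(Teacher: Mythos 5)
Your proof is correct, and it is essentially the same argument as the paper's: both boil down to a bidegree count on $\Om^{1}$ that kills the $\mu$--$\bar{\pa}$ cross-commutators, combined with the Laplacian identity imported from de Bartolomeis--Tomassini. The only (cosmetic) difference is the packaging: you route the computation through $\De_{\bar{\pa}+\mu}=\De_{\bar{\pa}}+\De_{\mu}+[\bar{\pa},\mu^{\ast}]+[\mu,\bar{\pa}^{\ast}]$ and then quote \eqref{E11}, whereas the paper expands $\De_{d}$ via Proposition \ref{P4}(3), discards $[\bar{\mu},\pa^{\ast}]$ and $[\mu,\bar{\pa}^{\ast}]$ (the same operators you discard, since $[\bar{\mu},\pa^{\ast}]=[\bar{\pa},\mu^{\ast}]$ by Proposition \ref{P3}(2)) and finishes with $\De_{d}-\De_{d^{\La}}=4[\pa,\bar{\pa}^{\ast}]+4[\bar{\pa},\pa^{\ast}]$; these two inputs are equivalent, so no new idea is gained or lost either way.
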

\begin{proof}
Let $\a$ be a $1$-form on $X$. By the third identity on Proposition \ref{P4}, we have
$$\De_{d}\a=2(\De_{\bar{\pa}}+\De_{\mu}+[\bar{\mu},\pa^{\ast}]+[\mu,\bar{\pa}^{\ast}]+[\pa,\bar{\pa}^{\ast}]+[\bar{\pa},\pa^{\ast}])\a=2(\De_{\bar{\pa}}+\De_{\mu}+[\pa,\bar{\pa}^{\ast}]+[\bar{\pa},\pa^{\ast}])\a.$$
Here we use the fact that the bi-degree of operators $|[\bar{\mu},\pa^{\ast}]|=(-2,2)$, $|[\mu,\bar{\pa}^{\ast}]|=(2,-2)$. From the Lemma 3.6 and  3.7 in \cite{dT}, we also have 
$$\De_{d}-\De_{d^{\La}}=4[\pa,\bar{\pa}^{\ast}]+4[\bar{\pa},\pa^{\ast}].$$
Therefore,
$$
(\De_{\bar{\pa}}+\De_{\mu})\a=\frac{1}{4}(\De_{d}+\De_{d^{\La}})\a.$$
\end{proof}
\begin{corollary}
	Let $(X,J,\w)$ be a closed $2n$-dimensional almost K\"{a}hler manifold. If $X\in\mathcal{M}(1,c)$, where $c>2$ is a positive constant, then
	$$\mathcal{H}^{1}_{d}(X)=\mathcal{H}^{0,1}_{d}(X)\oplus\mathcal{H}^{1,0}_{d}(X).$$
\end{corollary}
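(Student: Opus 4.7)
The reverse inclusion $\mathcal{H}^{0,1}_d(X)\oplus\mathcal{H}^{1,0}_d(X)\subset\mathcal{H}^1_d(X)$ is built into Theorem \ref{T3}, so the content of the statement is to establish $\mathcal{H}^1_d(X)\subset\mathcal{H}^{0,1}_d(X)\oplus\mathcal{H}^{1,0}_d(X)$. My plan is to take an arbitrary $\a\in\mathcal{H}^1_d(X)$ and apply Proposition \ref{P1} at level $k=1$ to write $\a=\a_h+\b$ with $\a_h\in\mathcal{H}^{0,1}_d\oplus\mathcal{H}^{1,0}_d$ and $\b\in(\mathcal{H}^{0,1}_d\oplus\mathcal{H}^{1,0}_d)^{\perp}$. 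Since $\a_h$ is itself $\De_d$-harmonic (by Theorem \ref{T3}), so is $\b$; in particular $d\b=0$, and since $\La$ vanishes identically on $\Om^1$ for bidegree reasons, one also has $d^{\La}\b=d\La\b-\La d\b=0$.

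Next I will evaluate $\langle(\De_{\bar\pa}+\De_{\mu})\b,\b\rangle_{L^2}$ in two different ways. From Proposition \ref{P14} together with $\De_d\b=0$ and $d^{\La}\b=0$, one obtains
\[
\langle(\De_{\bar\pa}+\De_{\mu})\b,\b\rangle_{L^2}=\tfrac14\langle\De_{d^{\La}}\b,\b\rangle_{L^2}=\tfrac14\|d^{\La\ast}\b\|^{2}.
\]
On the other hand, identity (\ref{E17}) of Lemma \ref{L1}, applied to the harmonic 1-form $\b$, gives $\|d^{\La\ast}\b\|^{2}=8(\|\mu\b^{0,1}\|^{2}+\|\bar\mu\b^{1,0}\|^{2})$. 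A bidegree count in $\Om^{1}$, which makes $\mu^{\ast}\b$, $\bar\mu^{\ast}\b$, $\mu\b^{1,0}$ and $\bar\mu\b^{0,1}$ all vanish, identifies the right-hand side with $8\langle(\De_{\mu}+\De_{\bar\mu})\b,\b\rangle_{L^2}$. Combining the two displays yields the key equality
\[
\langle(\De_{\bar\pa}+\De_{\mu})\b,\b\rangle_{L^2}=2\langle(\De_{\mu}+\De_{\bar\mu})\b,\b\rangle_{L^2}.
\]

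Finally I will feed in the defining inequality of $\mathcal{M}(1,c)$ applied to $\b\in(\mathcal{H}^{0,1}_d\oplus\mathcal{H}^{1,0}_d)^{\perp}$, obtaining
\[
2\langle(\De_{\mu}+\De_{\bar\mu})\b,\b\rangle_{L^2}\geq c\langle(\De_{\mu}+\De_{\bar\mu})\b,\b\rangle_{L^2}.
\]
Since $c>2$, this forces $\langle(\De_{\mu}+\De_{\bar\mu})\b,\b\rangle_{L^2}=0$, equivalently $\mu\b=\mu^{\ast}\b=\bar\mu\b=\bar\mu^{\ast}\b=0$. Combined with $\b\in\mathcal{H}^1_d$, Lemma \ref{L4} places $\b$ inside $\mathcal{H}^{0,1}_d\oplus\mathcal{H}^{1,0}_d$; together with $\b\perp(\mathcal{H}^{0,1}_d\oplus\mathcal{H}^{1,0}_d)$ this gives $\b=0$, and hence $\a=\a_h$ as required.

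The only subtle point is the sharpness of the numerical coefficient $2$: it arises as the product of the factor $\tfrac14$ coming from Proposition \ref{P14} (valid on $\Om^{1}$ because the $(\pm2,\mp2)$ cross terms $[\bar\pa,\mu^{\ast}]$ and $[\mu,\bar\pa^{\ast}]$ annihilate 1-forms for bidegree reasons) and the factor $8$ coming from identity (\ref{E17}) (likewise special to $k=1$ and already used in the sharp form of Theorem \ref{T5}). Both identities are intrinsically one-form phenomena, which is why the general Theorem \ref{T1} needs the much larger constant $c>20$, whereas here $c>2$ suffices and no further obstacle is expected.
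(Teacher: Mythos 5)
Your proof is correct. The paper's own argument for this corollary is a two-line reduction: it observes that on $\Om^{1}$ one has $\De_{\bar{\pa}+\mu}=\De_{\bar{\pa}}+\De_{\mu}$ (via Proposition \ref{P14} and the identity $\De_{\bar{\pa}+\mu}=\tfrac{1}{4}(\De_{d}+\De_{d^{\La}})$) and that $(\mathcal{H}^{1}_{\bar{\pa}+\mu})^{\perp}$ may be identified with $(\mathcal{H}^{0,1}_{d}\oplus\mathcal{H}^{1,0}_{d})^{\perp}$ via Lemmas \ref{L4} and \ref{L8}, so that $X\in\mathcal{M}(1,c)$ forces $X\in\widetilde{\mathcal{M}}(1,c)$, and then it cites Theorem \ref{T5} (whose $k=1$ case rests on identity (\ref{E17})). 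You instead bypass $\widetilde{\mathcal{M}}$ and Theorem \ref{T5} entirely and run the argument directly on the component $\b$ of a harmonic $1$-form produced by Proposition \ref{P1}: the exact identity $\langle(\De_{\bar{\pa}}+\De_{\mu})\b,\b\rangle_{L^{2}}=2\langle(\De_{\mu}+\De_{\bar{\mu}})\b,\b\rangle_{L^{2}}$, obtained by combining Proposition \ref{P14} with (\ref{E17}) and the bidegree vanishings on $\Om^{1}$, collides with the defining inequality of $\mathcal{M}(1,c)$ for $c>2$ and kills $\b$ through Lemma \ref{L4}. The underlying mechanism and the source of the sharp constant $2$ are the same in both arguments (the factor $\tfrac14$ from Proposition \ref{P14} times the factor $8$ from (\ref{E17})); what your version buys is a self-contained deduction that replaces the contradiction-plus-Cauchy--Schwarz scheme of Theorem \ref{T5} by an exact equality, which makes the role of the threshold $c>2$ completely transparent. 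All the individual steps (harmonicity of $\b$, $d^{\La}\b=0$ since $\La$ and $d$ both annihilate $\b$, the identification of $\|\mu\b^{0,1}\|^{2}+\|\bar{\mu}\b^{1,0}\|^{2}$ with $\langle(\De_{\mu}+\De_{\bar{\mu}})\b,\b\rangle_{L^{2}}$, and the final appeal to Lemma \ref{L4}) check out.
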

\begin{proof}
Under the setting, for any $\a\in(\mathcal{H}^{1}_{\pa}(X)\cap\mathcal{H}^{1}_{\bar{\pa}}(X)\cap\mathcal{H}^{1}_{\mu}(X)\cap\mathcal{H}^{1}_{\bar{\mu}}(X))^\perp=(\mathcal{H}^{1}_{d}(X)\cap\mathcal{H}^{1}_{d^{\La}}(X))^{\perp}$, we have
$$(\De_{\bar{\pa}+\mu}\a,\a)=((\De_{\bar{\pa}}+\De_{\mu})\a,\a)\geq c((\De_{\mu}+\De_{\bar{\mu}})\a,\a).$$
Hence $X\in\widetilde{\mathcal{M}}(1,c)$. The conclusion follows from Theorem \ref{T5}.
\end{proof}
\begin{proposition}
	Let $(X,J,\w)$ be a closed almost K\"ahler $4$-manifold. Then there exists a constant $c$ with $c>\frac{1}{2}$ such that $X\in \mathcal{M}(2,c)$.
\end{proposition}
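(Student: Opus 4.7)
The plan is to follow the template of Proposition \ref{P13}. From (\ref{E1}) together with the identity $\De_{\bar\pa}+\De_\mu = \De_\pa + \De_{\bar\mu}$, equality at $c = 1/2$ in the inequality $\langle(\De_{\bar\pa}+\De_\mu)\a,\a\rangle \geq c\langle(\De_\mu+\De_{\bar\mu})\a,\a\rangle$ is equivalent to $\langle(\De_{\bar\pa}+\De_\pa)\a,\a\rangle = 0$, i.e.\ to $\bar\pa\a = \bar\pa^*\a = \pa\a = \pa^*\a = 0$. So, by the same contradiction setup as in Proposition \ref{P13}, it suffices to show that no nonzero $\a \in V := (\oplus_{p+q=2}\mathcal{H}^{p,q}_d)^\perp$ satisfies this four-fold vanishing. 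The integrable case is vacuous since $\mu = \bar\mu = 0$ forces both sides of the defining inequality to be zero; I may therefore assume $J$ is non-integrable and invoke Lemma \ref{L2} to get $\mathcal{H}^{2,0}_d = \mathcal{H}^{0,2}_d = 0$, so that $V = (\mathcal{H}^{1,1}_d)^\perp$.

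Decompose $\a = \a^{2,0} + \a^{1,1} + \a^{0,2}$; by bidegree orthogonality each summand separately inherits $\bar\pa = \bar\pa^* = \pa = \pa^* = 0$. For the $(1,1)$-piece I use the crucial dimensional observation that $\Om^{3,0} = \Om^{-1,2} = \Om^{0,3} = \Om^{2,-1} = 0$ on a four-manifold, so that $\mu, \mu^*, \bar\mu, \bar\mu^*$ all act as zero on $\Om^{1,1}$. Consequently every commutator term in Proposition \ref{P4}(3) vanishes on $\a^{1,1}$, and the identity gives $\De_d \a^{1,1} = 2(\De_{\bar\pa} + \De_\mu)\a^{1,1} = 0$. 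Hence $\a^{1,1} \in \mathcal{H}^{1,1}_d$, and since $\a \in V$ this forces $\a^{1,1} = 0$.

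For the $(2,0)$-piece $\phi := \a^{2,0}$ (the $(0,2)$-piece is handled by complex conjugation), the equations $\pa\phi = 0$ and $\bar\pa^*\phi = 0$ hold automatically by bidegree in dimension four, while the almost K\"ahler identity $\pa^* = \sqrt{-1}[\La,\bar\pa]$ combined with $\La\phi = 0$ shows that $\pa^*\phi = \sqrt{-1}\La\bar\pa\phi = 0$ follows already from $\bar\pa\phi = 0$. Thus the only genuine constraint on $\phi$ is $\bar\pa\phi = 0$. By Theorem \ref{T3} one has $\mathcal{H}^{2,0}_d = \mathcal{H}^{2,0}_{\bar\pa}\cap \mathcal{H}^{2,0}_\mu$, so in order to deduce $\phi = 0$ via Lemma \ref{L2} I need $\mu^*\phi = 0$; the identity $[\La,\bar\mu] = \sqrt{-1}\mu^*$ with $\La\phi = 0$, combined with the Lefschetz isomorphism $\La : \Om^{1,2}\xrightarrow{\cong}\Om^{0,1}$ peculiar to dimension four, reduces $\mu^*\phi = 0$ to the single condition $\bar\mu\phi = 0$.

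The main obstacle is therefore the implication $\bar\pa\phi = 0 \Rightarrow \bar\mu\phi = 0$ for $\phi \in \Om^{2,0}$ in real dimension four. My intended route is to exploit the $(2,2)$-component of $d^2 = 0$, namely $\bar\pa^2 + \bar\mu\pa + \pa\bar\mu = 0$: together with $\pa\phi = 0$ and $\bar\pa\phi = 0$ this yields $\pa\bar\mu\phi = 0$, and since $\bar\pa(\bar\mu\phi) \in \Om^{1,3} = 0$ automatically, the form $\bar\mu\phi \in \Om^{1,2}$ lies in $\ker d|_{\Om^{1,2}}$; pairing $\bar\mu\phi$ against itself via the appropriate almost K\"ahler adjoint identities in bidegree $(1,2)$ (and invoking Proposition \ref{P14} if the direct pairing stalls) should then force $\|\bar\mu\phi\|^2 = 0$. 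Once this step is achieved, Theorem \ref{T3} and Lemma \ref{L2} conclude $\phi = 0$, finishing the contradiction and proving the proposition.
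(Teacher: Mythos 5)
Your overall strategy coincides with the paper's: argue by contradiction at $c=\tfrac{1}{2}$, deduce $\De_{\bar{\pa}}\a=\De_{\pa}\a=0$, split into bidegree components, and exploit dimension-four degree reasons. Your treatment of the $(1,1)$-piece is correct and equivalent to the paper's: in dimension four all of $\mu,\mu^{\ast},\bar{\mu},\bar{\mu}^{\ast}$ annihilate $\Om^{1,1}$ for bidegree reasons, so $\mathcal{H}^{1,1}_{\bar{\pa}}=\mathcal{H}^{1,1}_{\bar{\pa}}\cap\mathcal{H}^{1,1}_{\mu}=\mathcal{H}^{1,1}_{d}$ by Theorem \ref{T3} (note, though, that the vanishing of the commutators $[\pa,\bar{\pa}^{\ast}]$ and $[\bar{\pa},\pa^{\ast}]$ on $\a^{1,1}$ comes from the four-fold vanishing you established, not from the dimension count). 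The detour through Lemma \ref{L2} at the end is also unnecessary: once $\a^{2,0}\in\mathcal{H}^{2,0}_{d}$, the hypothesis $\a\in(\bigoplus_{p+q=2}\mathcal{H}^{p,q}_{d})^{\perp}$ kills $\a^{2,0}$ directly, with no case distinction on integrability.

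The genuine gap is exactly where you say it is: the implication $\bar{\pa}\phi=0\Rightarrow\bar{\mu}\phi=0$ for $\phi\in\Om^{2,0}$ (equivalently $\mu^{\ast}\phi=0$, since $\mu^{\ast}=-\ast\bar{\mu}\ast$ and $\ast\phi=\phi$ on a four-manifold). Your sketch establishes only that $\bar{\mu}\phi$ is a $d$-closed $3$-form, and a $d$-closed form need not vanish; the natural pairing $\|\bar{\mu}\phi\|^{2}=\langle d\phi,\bar{\mu}\phi\rangle_{L^{2}}=\langle\phi,\bar{\mu}^{\ast}\bar{\mu}\phi\rangle_{L^{2}}$ merely returns $\|\bar{\mu}\phi\|^{2}$, and Proposition \ref{P14} concerns $1$-forms, so neither route closes the argument; ``should then force $\|\bar{\mu}\phi\|^{2}=0$'' is the entire content of the claim, not a proof of it. This is precisely the step the paper disposes of by asserting $\mathcal{H}^{2,0}_{\bar{\pa}}=\mathcal{H}^{2,0}_{\bar{\pa}}\cap\mathcal{H}^{2,0}_{\mu}=\mathcal{H}^{2,0}_{d}$ (and its $(0,2)$ analogue) with no justification beyond the dimension being $4$; unlike the $(1,1)$ and degree-one cases, this identity is not a bidegree triviality, because $\mu^{\ast}:\Om^{2,0}\rightarrow\Om^{0,1}$ does not vanish for degree reasons. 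So you have correctly isolated the crux of the proposition, but you have not supplied the argument for it; as written the proof is incomplete at that point.
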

\begin{proof}
	We also only need to prove that $c\neq \frac{1}{2}$. 
	If not, there exists a nonzero $2$-form $\a\in(\bigoplus_{p+q=2}\mathcal{H}_{d}^{p,q})^{\bot}$ such that
	$$\langle(\De_{\bar{\pa}}+\De_{\mu})\a,\a\rangle_{L^{2}}=\frac{1}{2}\langle (\De_{\mu}+\De_{\bar{\mu}})\a,\a\rangle_{L^{2}}.$$
	Similarly, as in the proof of  Proposition \ref{P13}, we get 
	$$ \De_{\bar{\pa}}\a=\De_{\pa}\a=0.$$
	Noting that $\a=\a^{2,0}+\a^{1,1}+\a^{0,2}$, where $\a^{p,q}\in\Om^{p,q}$, we conclude that
	$$ \De_{\bar{\pa}}\a^{p,q}=\De_{\pa}\a^{p,q}=0,\quad (p+q=2).$$
	Since the dimension of $X$ is $4$, we have
	\begin{align*}
	\mathcal{H}_{\pa}^{0,2}=\mathcal{H}_{\pa}^{0,2}\cap\mathcal{H}_{\bar{\mu}}^{0,2}=\mathcal{H}_{d}^{0,2},\\
	\mathcal{H}_{\bar{\pa}}^{2,0}=\mathcal{H}_{\bar{\pa}}^{2,0}\cap\mathcal{H}_{\mu}^{2,0}=\mathcal{H}_{d}^{2,0},\\
	\mathcal{H}_{\bar{\pa}}^{1,1}=\mathcal{H}_{\bar{\pa}}^{1,1}\cap\mathcal{H}_{\mu}^{1,1}=\mathcal{H}_{d}^{1,1}.
	\end{align*}
	Hence,
	$$\a^{0,2}\in\mathcal{H}_{d}^{0,2},\quad  \a^{1,1}\in\mathcal{H}_{d}^{1,1}\quad  \text{and}\quad \a^{0,2}\in\mathcal{H}_{d}^{0,2}.$$
	But we assume that $\a\in(\bigoplus_{p+q=2}\mathcal{H}_{d}^{p,q})^{\bot}$, thus
	$$\a\equiv 0.$$
	This contradicts our initial assumption of the 2-form $\a$.
\end{proof}

\section*{Acknowledgements}
We would like to thank S.O. Wilson for kind comments regarding his article \cite{CW1} and  D.X. Lin for the helpful discussions. This work is supported by the National Natural Science Foundation of China (No. 12271496) and the Youth Innovation Promotion Association CAS, the Fundamental Research Funds of the Central Universities, and the USTC Research Funds of the Double First-Class Initiative.

\bigskip
\footnotesize

\end{document}